\definecolor{ANDREW}{RGB}{255,127,0}
\theoremstyle{plain}
\newtheorem{proposition}{Proposition}[section]
\newtheorem{theorem}[proposition]{Theorem}
\newtheorem{lemma}[proposition]{Lemma}
\newtheorem{corollary}[proposition]{Corollary}
\theoremstyle{definition}
\newtheorem{example}[proposition]{Example}
\newtheorem{definition}[proposition]{Definition}
\newtheorem{observation}[proposition]{Observation}
\theoremstyle{remark}
\newtheorem{remark}[proposition]{Remark}
\newtheorem*{question}{Question}
\DeclareMathOperator{\Aff}{Aff}
\DeclareMathOperator{\dimension}{dim}
\DeclareMathOperator{\Real}{Re}
\DeclareMathOperator{\Imaginary}{Im}
\DeclareMathOperator{\GL}{GL}
\DeclareMathOperator{\ord}{ord} 
\DeclareMathOperator{\Span}{Span} 
\DeclareMathOperator{\Euc}{Euc}
\DeclareMathOperator{\Bc}{\mathcal{B}}
\DeclareMathOperator{\Cc}{\mathcal{C}}
\DeclareMathOperator{\Gc}{\mathcal{G}}
\DeclareMathOperator{\Nc}{\mathcal{N}}
\DeclareMathOperator{\Oc}{\mathcal{O}}
\DeclareMathOperator{\Uc}{\mathcal{U}}
\DeclareMathOperator{\Cb}{\mathbb{C}}
\DeclareMathOperator{\Db}{\mathbb{D}}
\DeclareMathOperator{\Fb}{\mathbb{F}}
\DeclareMathOperator{\Kb}{\mathbb{K}}
\DeclareMathOperator{\Lb}{\mathbb{L}}
\DeclareMathOperator{\Nb}{\mathbb{N}}
\DeclareMathOperator{\Pb}{\mathbb{P}}
\DeclareMathOperator{\Rb}{\mathbb{R}}
\DeclareMathOperator{\Xb}{\mathbb{X}}
\newcommand{\abs}[1]{\left|#1\right|}
\newcommand{\norm}[1]{\left\|#1\right\|}
\newcommand{\wh}[1]{\widehat{#1}}
\newcommand{\ip}[1]{\left\langle #1\right\rangle}
\begin{document}

\title{A gap theorem for the complex geometry of convex domains}
\author{Andrew Zimmer}\address{Department of Mathematics, University of Chicago, Chicago, IL 60637.}
\email{aazimmer@uchicago.edu}
\date{\today}
\keywords{}
\subjclass[2010]{}

\begin{abstract} In this paper we establish a gap theorem for the complex geometry of smoothly bounded convex domains which informally says that if the complex geometry near the boundary is close to the complex geometry of the unit ball, then the domain must be strongly pseudoconvex. 

One consequence of our general result is the following:  for any dimension there exists some $\epsilon > 0$ so that if the squeezing function on a smoothly bounded convex domain is greater than $1-\epsilon$ outside a compact set, then the domain is strongly pseudoconvex (and hence the squeezing function limits to one on the boundary). Another consequence is the following:  for any dimension $d$  there exists some $\epsilon > 0$ so that if the holomorphic sectional curvature of the Bergman metric on a smoothly bounded convex domain is within $\epsilon$ of $-4/(d+1)$ outside a compact set, then the domain is strongly pseudoconvex (and hence the holomorphic sectional curvature limits to $-4/(d+1)$ on the boundary). 
\end{abstract}

\maketitle

\section{Introduction}

There are many results showing that the asymptotic complex geometry of a strongly pseudoconvex domain coincides with the complex geometry of  the unit ball. In this paper we consider the following related question:

\begin{question}
Suppose $\Omega \subset \Cb^d$ is a bounded pseudoconvex domain with $C^\infty$ boundary. If the asymptotic complex geometry of $\Omega$ coincides with the complex geometry of the unit ball, is $\Omega$ strongly pseudoconvex? 
\end{question}

We will restrict our attention to convex domains and for such domains answer the above question in the affirmative (see Theorems~\ref{thm:cont} and~\ref{thm:upper_cont} below). We begin by stating some consequences of our general results. 

\subsection{The squeezing function}

Given a domain $\Omega \subset \Cb^d$ let $s_\Omega : \Omega \rightarrow (0,1]$ be the \emph{squeezing function on $\Omega$}, that is 
\begin{align*}
s_\Omega(p) = \sup\{ r : & \text{ there exists an one-to-one holomorphic map } \\
& f: \Omega \rightarrow \Bc \text{ with } f(p)=0 \text{ and } r\Bc \subset f(\Omega) \}.
\end{align*}
The squeezing function has a number of applications (see for instance~\cite{LSY2004,Y2009}). 

Theorems of Diederich, Forn{\ae}ss, and Wold~\cite[Theorem 1.1]{DFW2014} and Deng, Guan, and Zhang~\cite[Theorem 1.1]{DGZ2016} imply the following asymptotic result for the squeezing function on strongly pseudoconvex domains:

\begin{theorem}~\cite{DFW2014, DGZ2016}
Suppose $\Omega \subset \Cb^d$ is a bounded strongly pseudoconvex domain. Then 
\begin{align*}
\lim_{z \rightarrow \partial \Omega} s_\Omega(z) = 1.
\end{align*}
\end{theorem}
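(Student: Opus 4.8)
The plan is to reduce the statement to a local estimate at a single boundary point, establish that estimate by means of a global biholomorphic change of coordinates that ``exposes'' the point, and then patch the local estimates together by compactness. It suffices to show that for every $p \in \partial\Omega$ one has $\liminf_{z \to p} s_\Omega(z) \geq 1$: since $s_\Omega \leq 1$ everywhere, a failure of $\lim_{z\to\partial\Omega} s_\Omega(z) = 1$ would produce $z_n \to \partial\Omega$ with $s_\Omega(z_n) \leq 1 - \eta$, and passing to a subsequence with $z_n \to p \in \partial\Omega$ (compactness of $\partial\Omega$) contradicts the pointwise bound at $p$. So the whole problem is to produce, for $z$ near a fixed $p$, a one-to-one holomorphic map $f_z \colon \Omega \to \Bc$ with $f_z(z) = 0$ whose image contains $r_z\Bc$ with $r_z \to 1$ as $z \to p$.

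The reason such maps should exist is that a strongly pseudoconvex boundary point is locally convexifiable, and after the convexification the scaling method exhibits $\Omega$, near $p$, as asymptotic to the paraboloid $P = \{\Real z_1 + \abs{z'}^2 < 0\}$, which is biholomorphic to $\Bc$ via the Cayley transform. This is only a local picture, and the squeezing function demands a map of all of $\Omega$ into the ball. The tool that upgrades it is the Diederich--Forn{\ae}ss--Wold construction of a \emph{global exposing map}: because $\Omega$ is strongly pseudoconvex one can find a biholomorphism $g$ on a neighbourhood of $\overline\Omega$, injective on $\overline\Omega$, with $g(\Omega) \subset \Bc$, with $\overline{g(\Omega)} \cap \partial\Bc = \{g(p)\}$, and with $g(\Omega)$ osculating $\partial\Bc$ to second order at $g(p)$ (the second-order term is forced, since the Levi form is a biholomorphic invariant, which is exactly why $\Bc$ must be taken with the curvature matching that of $\partial\Omega$ at $p$). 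Reproving this is genuine work: one first straightens $\Omega$ locally into a convex domain and then pushes the rest of the boundary strictly inside a sphere by a globally defined deformation of Kerzman--Stein / Lempert type.

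Granting the exposing map $g$, set $f_z := \varphi_{g(z)} \circ g$, where $\varphi_a \in \Aut(\Bc)$ is the automorphism sending $a$ to $0$. Then $f_z$ is one-to-one on $\Omega$ with $f_z(z) = 0$, so I must show $f_z(\Omega) = \varphi_{g(z)}(g(\Omega))$ contains $r_z\Bc$ with $r_z \to 1$. The complement $\Bc \setminus g(\Omega)$ is a shell along $\partial\Bc$ whose thickness at distance $t$ from $g(p)$ is $o(t^2)$; automorphisms of $\Bc$ preserve $\partial\Bc$, so $\varphi_{g(z)}(\Bc \setminus g(\Omega))$ is again a shell along $\partial\Bc$, and the anisotropic boundary behaviour of $\varphi_{g(z)}$ as $g(z) \to g(p)$, combined with the quadratic pinching, forces the thickness of this image shell to tend to $0$ uniformly. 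Hence $\varphi_{g(z)}(g(\Omega)) \supset K$ for every fixed compact $K \subset \Bc$ once $z$ is close enough to $p$, which gives $\liminf_{z\to p} s_\Omega(z) \geq 1$. Equivalently, one checks that the Kobayashi ball of $\Bc$ about $g(z)$ of radius $\rho_z$, with $\rho_z \to \infty$ slowly enough that $\tanh\rho_z \to 1$, stays inside $g(\Omega)$.

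I expect the two genuinely hard steps to be the construction of the global exposing map and the uniformity in the shell-thickness (equivalently, Kobayashi-ball) estimate; the reduction to a point and the final compactness argument are routine by comparison. An alternative to the exposing argument, due to Deng--Guan--Zhang, bypasses the explicit embedding entirely: one compares the Kobayashi, Carath\'eodory, and Bergman metrics of $\Omega$ near $p$ with the corresponding metrics of the ball --- all of which share the same boundary asymptotics on a strongly pseudoconvex domain --- and reads off the required lower bound for $s_\Omega$ from that comparison. A last technical point is that the neighbourhood of $p$, the exposing map, and the boundary curvature all vary with $p$; their uniform control over $\partial\Omega$ uses compactness of $\partial\Omega$ together with the $C^\infty$ (in fact $C^2$) regularity of the boundary.
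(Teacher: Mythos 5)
This theorem is not proved in the paper at all; it is imported from \cite{DFW2014,DGZ2016}, so there is no internal proof to compare your write-up against --- the relevant comparison is with the cited works. Your sketch follows the Diederich--Forn{\ae}ss--Wold route: reduce to the pointwise statement $\liminf_{z\to p}s_\Omega(z)\geq 1$ at each $p\in\partial\Omega$ (your compactness reduction is fine), globally expose $p$ by an injective holomorphic map $g$ of $\overline\Omega$ into $\overline{\Bc}$ with $\overline{g(\Omega)}\cap\partial\Bc=\{g(p)\}$, then set $f_z=\varphi_{g(z)}\circ g$ and show the image of $\Bc\setminus g(\Omega)$ recedes to $\partial\Bc$. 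The mechanism in the last step is the right one: if the gap between $\partial g(\Omega)$ and $\partial\Bc$ at distance $t$ from $g(p)$ is $o(t^2)$, then the Kobayashi distance in $\Bc$ from $g(z)$ to the entire complement tends to infinity uniformly as $g(z)\to g(p)$ along the normal, which is exactly the statement that the Kobayashi ball of radius $\rho_z\to\infty$ (slowly) stays in $g(\Omega)$, and this gives $r_z\to 1$.

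Two caveats. First, your parenthetical claim that the second-order osculation ``is forced, since the Levi form is a biholomorphic invariant'' is wrong: invariance of the Levi form does not make the $2$-jet of $\partial g(\Omega)$ at $g(p)$ agree with that of the unit sphere; this matching must be engineered (locally by a quadratic holomorphic change of coordinates normalizing the Levi form to the identity, and the exposing construction has to be made compatible with that normalization). Nor is it a dispensable refinement: with mere internal tangency the argument yields a lower bound strictly below $1$ --- for instance, if $g(\Omega)$ were a smaller ball internally tangent to $\Bc$ at $g(p)$, the images $\varphi_{g(z)}(g(\Omega))$ converge to a ``steeper'' paraboloid rather than exhausting $\Bc$, so the shell-thickness claim genuinely needs the $o(t^2)$ pinching you state. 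Second, the existence of the global exposing map with the required contact is precisely the main theorem of \cite{DFW2014}; you treat it as a black box, which is legitimate in this context (the paper under review also only cites the result), but it means your text is an outline of the cited proof rather than an independent one, and your description of the alternative argument of \cite{DGZ2016} (a comparison of invariant metrics) is loose and should not be relied on as stated. Granting the exposing theorem and arranging (rather than asserting) the second-order contact, the estimates you sketch do go through.
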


 Based on the above theorem, it is natural to ask if the converse holds: 

\begin{question} (Forn{\ae}ss) Suppose $\Omega  \subset \Cb^d$ is a bounded pseudoconvex domain with $C^\infty$ boundary. If 
\begin{align*}
\lim_{z \rightarrow \partial \Omega} s_\Omega(z) = 1,
\end{align*}
is $\Omega$ strongly pseudoconvex?
\end{question}

In this paper we answer the question for convex domains and in this case prove a stronger assertion: 

\begin{theorem}\label{thm:squeezing}(see Section~\ref{sec:squeezing})
For any $d > 0$, there exists some $\epsilon=\epsilon(d) > 0$ so that: if $\Omega  \subset \Cb^d$ is a bounded convex domain with $C^\infty$ boundary and 
\begin{align*}
s_\Omega(z) \geq 1-\epsilon
\end{align*}
outside a compact subset of $\Omega$, then $\Omega$ is strongly pseudoconvex. 
\end{theorem}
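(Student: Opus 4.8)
The plan is to argue by contradiction using a compactness/renormalization argument, which is the standard toolkit for "gap theorems" in this setting and is presumably how Theorems~\ref{thm:cont} and~\ref{thm:upper_cont} are proved in the body of the paper. Suppose the conclusion fails: then there is a dimension $d$ and a sequence $\Omega_n \subset \Cb^d$ of bounded convex domains with $C^\infty$ boundary, each satisfying $s_{\Omega_n} \geq 1 - 1/n$ outside a compact subset, but with $\Omega_n$ not strongly pseudoconvex. For each $n$ pick a boundary point $\xi_n \in \partial\Omega_n$ that is weakly pseudoconvex (i.e.\ the Levi form degenerates there; since $\Omega_n$ is convex and not strongly pseudoconvex such a point exists), and pick points $p_n \in \Omega_n$ approaching $\xi_n$ along the inner normal. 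Since $p_n \to \partial\Omega_n$ we may assume $p_n$ lies outside the relevant compact set, so $s_{\Omega_n}(p_n) \to 1$.

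Next I would renormalize: after applying affine maps $A_n \in \Aff(\Cb^d)$ (translations, rotations, and anisotropic scalings adapted to the boundary geometry at $\xi_n$ — the parabolic scaling of Pinchuk type, which preserves convexity), one arranges that $A_n(\Omega_n)$ converges in the local Hausdorff sense to a limit domain $\Omega_\infty$, that $A_n(p_n)$ converges to an interior point $p_\infty \in \Omega_\infty$, and that the degeneracy of the Levi form at $\xi_n$ survives in the limit so that $\Omega_\infty$ is \emph{not} biholomorphic to the ball $\Bc$ — for instance $\Omega_\infty$ contains a nontrivial complex affine disc or line in its boundary, or more precisely is a proper convex domain of the form $\{\Imag z_1 > P(z', \Imag z_2, \dots)\}$ with $P$ a convex polynomial that is not positive definite. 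At the same time, the squeezing function is well-behaved under this kind of limit: a standard normal-families argument (extracting limits of the extremal maps $f_n : \Omega_n \to \Bc$, using that they are uniformly bounded hence normal, and checking the limit is still injective via Hurwitz together with the convexity of the targets) gives $s_{\Omega_\infty}(p_\infty) = 1$. By the characterization of the equality case for the squeezing function — $s_\Omega(p) = 1$ forces $\Omega$ to be biholomorphic to $\Bc$ (this is the Deng–Guan–Zhang rigidity, valid also in the unbounded convex setting after care, or one can use that a convex domain with $s=1$ somewhere must be an affine image of the ball) — we conclude $\Omega_\infty \cong \Bc$, contradicting the previous sentence.

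The main obstacle — and the place where the full strength of the paper's machinery is needed — is making the renormalization produce a limit domain that is genuinely \emph{not} the ball while simultaneously keeping the squeezing function under control in the limit. Two things can go wrong: the rescaled domains could degenerate (escape to a half-space or a product, or collapse), and the squeezing estimate, being a supremum over maps into a \emph{fixed} target $\Bc$, need not pass to limits for free if the competing maps themselves degenerate. Handling this requires (i) quantitative control on the Kobayashi geometry of convex domains near the boundary — comparing the Kobayashi metric on $\Omega_n$ near $\xi_n$ to that of the ball, where the squeezing hypothesis gives a two-sided bound that pinches the metric — so that the rescalings can be chosen to converge to a nondegenerate convex $\Omega_\infty$; and (ii) the observation that the squeezing function of $\Bc$ restricted to any $r\Bc \subset f_n(\Omega_n)$ pattern forces $f_n(\Omega_n)$ to fill out almost all of $\Bc$, which in the limit forces $\Omega_\infty$ to be biholomorphic, via a globally defined limit map, to $\Bc$ itself. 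Once these are in place the contradiction is immediate. I expect the convexity hypothesis to be used crucially and repeatedly: it guarantees the scaling maps stay affine (so limits of domains are again convex), it gives automatic normality and nondegeneracy estimates for the Kobayashi metric (Frankel/Forstnerič-type), and it underlies the rigidity statement $s_\Omega = 1 \Rightarrow \Omega \cong \Bc$ that closes the argument.
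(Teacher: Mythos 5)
Your overall strategy (argue by contradiction, rescale by affine maps, use upper semicontinuity of the squeezing function under local Hausdorff convergence, and close with a rigidity statement) is the same philosophy the paper follows, and your semicontinuity step is essentially Proposition 7.1 of the paper (normal families, convergence of the Kobayashi distance, and the Deng--Guan--Zhang injectivity of the limit map). The genuine gap is the step you yourself flag as the main obstacle and then do not close: the assertion that the rescalings $A_n(\Omega_n)$ can be chosen so that ``the degeneracy of the Levi form at $\xi_n$ survives in the limit,'' so that $\Omega_\infty$ is not biholomorphic to $\Bc$. The fixed-domain rescaling results (Gaussier's theorem at finite type points, and its infinite-type analogue) apply to one domain at a time, with no uniformity in $n$; in your diagonal setting both the domain and the point vary, and without a normalization the failure of strong pseudoconvexity simply does not persist under local Hausdorff limits: a limit of domains each having a degenerate boundary point can be strongly pseudoconvex, can be an unbounded affine model of the ball, or can degenerate altogether. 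Your proposed remedies (i) and (ii) — pinching the Kobayashi metric via the squeezing hypothesis, and forcing $f_n(\Omega_n)$ to exhaust $\Bc$ — only feed the ``limit \emph{is} the ball'' half of the contradiction; they contribute nothing to the ``limit is \emph{not} the ball'' half, which is where the content lies. Moreover, even granting a limit model with a disc in its boundary or a non-quadratic convex polynomial model, ``hence not biholomorphic to $\Bc$'' is a theorem, not an observation: the paper needs Coupet--Pinchuk for the weighted-homogeneous polynomial models and the non-Gromov-hyperbolicity of the Kobayashi metric for domains whose boundary contains an affine disc.

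For comparison, the paper closes this gap with a two-step limit rather than a single diagonal one. First, for each fixed $\Omega_n$ and non-strongly-pseudoconvex $\xi_n \in \partial\Omega_n$, Theorems~\ref{thm:rescale_finite_type} and~\ref{thm:rescale_inf_type} produce rescalings converging to a \emph{normalized} model $\wh{\Omega}_n$ lying in the explicit family $\Lb$ (domains in $\Pb(m)$ with $m \neq (2,\dots,2)$, or in $\Fb$, pinned down by the conditions $\Db e_i \subset \Omega$, $Z_i \cap \Omega = \emptyset$, and the weighted homogeneity of $P$), and the hypothesis $s \geq 1-1/n$ is transferred to \emph{every} point of $\wh{\Omega}_n$, not just one, by showing via properness of $K_{\Omega_n}$ that preimages of any fixed point of the model escape to $\partial\Omega_n$. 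Second, Theorem~\ref{thm:L_is_cpct} shows that $\Lb$ is compact and contains no domain biholomorphic to $\Bc$ (this includes the case where the types $m^{(n)}$ blow up, producing a boundary disc), so the second limit $\wh{\Omega} = \lim \wh{\Omega}_n$ stays in $\Lb$ while satisfying $s_{\wh{\Omega}} \equiv 1$, giving the contradiction through the general Theorem~\ref{thm:upper_cont}. A smaller point: your endgame invokes ``$s_{\Omega_\infty}(p_\infty)=1$ at one point implies $\Omega_\infty \cong \Bc$'' for an unbounded limit domain; this needs the attainment of the extremal map on such domains (available since elements of $\Xb_d$ are biholomorphic to bounded domains, but it must be said), whereas the paper only needs the rigidity hypothesis of Theorem~\ref{thm:upper_cont} applied with $f = s$.
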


\subsection{Holomorphic sectional curvature of the Bergman metric}

Let $(X,J)$ be a complex manifold with K{\"a}hler metric $g$. If $R$ is the Riemannian curvature tensor of $(X,g)$, then the \emph{holomorphic sectional curvature} $H_g(v)$ of a nonzero vector $v$ is defined to be the sectional curvature of the 2-plane spanned by $v$ and $Jv$, that is 
\begin{align*}
H_g(v) := \frac{ R(v,Jv,Jv,v)}{\norm{v}_g^4}.
\end{align*}
It is a classical result of Hawley~\cite{H1953} and Igusa~\cite{I1954} that if $(X,g)$ is a complete simply connected K{\"a}hler manifold with constant negative holomorphic sectional curvature, then $X$ is biholomorphic to the unit ball (also see Chapter IX, Section 7 in~\cite{KN1996}). Moreover, if $b_{\Bc}$ is the Bergman metric on the unit ball $\Bc \subset \Cb^d$, then $(\Bc, b_{\Bc})$ has constant holomorphic sectional curvature $-4/(d+1)$. 

Klembeck proved the following asymptotic result for the Bergman metric on a strongly pseudoconvex domain:

\begin{theorem}[Klembeck~\cite{K1978}]
Suppose $\Omega \subset \Cb^d$ is a bounded strongly pseudoconvex domain. Then 
\begin{align*}
\lim_{ z \rightarrow \partial \Omega} \max_{v \in T_z \Omega \setminus \{0\}} \abs{H_{b_\Omega}(v) - \frac{-4}{d+1}} = 0
\end{align*}
where $b_\Omega$ is the Bergman metric on $\Omega$. 
\end{theorem}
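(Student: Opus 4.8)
The plan is to derive the boundary asymptotics from the biholomorphic invariance of the Bergman metric (hence of $H_{b_\Omega}$) together with a rescaling argument at $\partial\Omega$, in the spirit of Klembeck's original argument and of Pinchuk scaling. Since $\partial\Omega$ is compact, it suffices to rule out a sequence $z_j \in \Omega$ with $z_j \to \partial\Omega$ and $\max_{v \in T_{z_j}\Omega\setminus\{0\}} \abs{H_{b_\Omega}(v) + 4/(d+1)} \geq c$ for some fixed $c > 0$; passing to a subsequence we may assume $z_j \to p \in \partial\Omega$. By a local biholomorphic change of coordinates centered at $p$ we arrange that $p = 0$, that $\Omega = \{\rho < 0\}$ near $0$, and that $\rho(z) = -\Real z_d + \abs{z'}^2 + O(\abs{z}^3)$ where $z = (z', z_d) \in \Cb^{d-1}\times\Cb$; the osculating model of $\Omega$ at $p$ is then the Siegel domain $\Hb = \{w \in \Cb^d : \Real w_d > \abs{w'}^2\}$, which is biholomorphic to $\Bc$.

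Next I would run the rescaling. For each $j$ set $\delta_j = \mathrm{dist}(z_j,\partial\Omega)$, compose a local biholomorphism straightening $\partial\Omega$ to second order at the point of $\partial\Omega$ nearest $z_j$ with the non-isotropic dilation $(w',w_d)\mapsto(\delta_j^{-1/2}w',\delta_j^{-1}w_d)$, and call the composition $\Lambda_j$. Standard estimates for strongly pseudoconvex domains give that the domains $\Omega_j := \Lambda_j(\Omega)$ converge to $\Hb$ locally in the Hausdorff sense, with defining functions converging in $C^\infty_{\mathrm{loc}}$, while the base points $q_j := \Lambda_j(z_j)$ converge to $q_0 := (0,\dots,0,1) \in \Hb$. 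Now invoke the boundary behaviour of the Bergman kernel near strongly pseudoconvex points --- Fefferman's asymptotic expansion, together with the localization and comparison estimates of Diederich--Forn{\ae}ss--Herbort --- to conclude that the diagonal kernels $K_{\Omega_j}(w,w)$ converge to $K_\Hb(w,w)$ uniformly, together with all derivatives, on a fixed neighbourhood of $q_0$. Hence $\log K_{\Omega_j} \to \log K_\Hb$ in $C^\infty$ near $q_0$, so the Bergman metrics $b_{\Omega_j}$ and their full curvature tensors converge at $q_0$ to those of $b_\Hb$. Since each $\Lambda_j$ is biholomorphic,
\[
\max_{v \in T_{z_j}\Omega\setminus\{0\}} \abs{H_{b_\Omega}(v) + \tfrac{4}{d+1}} \;=\; \max_{v \in T_{q_j}\Omega_j\setminus\{0\}} \abs{H_{b_{\Omega_j}}(v) + \tfrac{4}{d+1}} \;\longrightarrow\; \max_{v \in T_{q_0}\Hb\setminus\{0\}} \abs{H_{b_\Hb}(v) + \tfrac{4}{d+1}} = 0,
\]
the last equality because $\Hb \cong \Bc$ and $(\Bc, b_\Bc)$ has constant holomorphic sectional curvature $-4/(d+1)$; this contradicts $\max_v \abs{\cdots} \geq c$ and proves the theorem.

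The hard part is the $C^\infty$-stability of the Bergman kernel under the non-isotropic rescaling: the curvature tensor sees derivatives of $\log K$ up to order four, so I need those to converge uniformly near $q_0$, and this is exactly where strong pseudoconvexity is indispensable. One concrete route is to substitute Fefferman's expansion $K_\Omega(z,z) = \phi(z)\abs{\rho(z)}^{-(d+1)} + \psi(z)\log\abs{\rho(z)}$, with $\phi,\psi \in C^\infty(\overline\Omega)$ and $\phi > 0$ on $\partial\Omega$, into the formula for the Bergman metric and check that, after the dilation, the leading term $\phi\abs{\rho}^{-(d+1)}$ reproduces the Bergman data of $\Hb$ at $q_0$ while the logarithmic term and the variation of $\phi$ contribute only $O(\delta_j)$ corrections to the metric and its derivatives; alternatively one can establish uniform $L^2$ (Bergman-extremal) estimates directly on the scaled domains $\Omega_j$. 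Once such a stability statement is in hand, the remaining ingredients --- convergence $\Omega_j \to \Hb$, invariance of $H_{b_\Omega}$, and constancy of the holomorphic sectional curvature on the ball --- are routine.
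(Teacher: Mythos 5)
The paper offers no proof of this statement: it is quoted as Klembeck's theorem~\cite{K1978} purely as motivation for the converse result (Theorem~\ref{thm:bergman}), so there is nothing internal to compare against. Your proposal is essentially the classical route --- Klembeck's original argument via Fefferman's boundary expansion of the Bergman kernel, packaged in the language of Pinchuk-type non-isotropic scaling --- and the outline is sound: biholomorphic invariance of $b_\Omega$ and $H_{b_\Omega}$, convergence of the rescaled domains to the Siegel model $\Hb\cong\Bc$, and constancy of the holomorphic sectional curvature of $b_\Bc$ are all correctly deployed, and the reduction to sequences via compactness of $\partial\Omega$ is fine.

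The weakness is that the step carrying the entire analytic content of the theorem is asserted rather than proved. The curvature tensor of $b_{\Omega_j}$ at $q_j$ involves four derivatives of $\log K_{\Omega_j}$, so you need uniform $C^4$ (not merely pointwise or $C^0$) convergence of the rescaled kernels near $q_0$; your two proposed routes (term-by-term differentiation of Fefferman's expansion after the dilation, checking that the $\psi\log\abs{\rho}$ term and the variation of $\phi$ contribute only vanishing corrections to the curvature, or uniform Bergman-extremal $L^2$ estimates on the scaled domains) are both viable, but each is a substantial computation or estimate that you have not carried out, and it is exactly where strong pseudoconvexity enters. Until one of them is written down, the argument is a plan rather than a proof. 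Two smaller points to fix in any writeup: (i) as stated, $\Lambda_j$ is a composition with a \emph{local} straightening map, so $\Lambda_j(\Omega)$ is not defined; you should either take the second-order normalization to be a global polynomial automorphism of $\Cb^d$ (absorbing the holomorphic quadratic part of $\rho$ into $z_d$ and diagonalizing the Levi form linearly), or work with a local piece $\Omega\cap U$ and invoke localization of the Bergman kernel at strongly pseudoconvex boundary points to compare $K_\Omega$ with $K_{\Omega\cap U}$ up to controlled errors; (ii) passing from convergence of the metrics and curvature tensors at $q_j\to q_0$ to convergence of the maximum over $v\neq 0$ uses uniform non-degeneracy of $b_{\Omega_j}$ at $q_j$, which should be recorded (it follows from the same kernel stability, or from comparison of $b_{\Omega_j}$ with $k_{\Omega_j}$ as in Proposition~\ref{prop:bergman}).
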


In this paper we will prove the following converse to Klembeck's theorem:

\begin{theorem}\label{thm:bergman}(see Section~\ref{sec:bergman})
For any $d > 0$, there exists some $\epsilon=\epsilon(d) > 0$ so that: if $\Omega  \subset \Cb^d$ is a bounded convex domain with $C^\infty$ boundary and 
\begin{align*}
\max_{v \in T_z \Omega \setminus \{0\}} \abs{H_{b_\Omega}(v) - \frac{-4}{d+1}} \leq \epsilon
\end{align*}
outside a compact subset of $\Omega$, then $\Omega$ is strongly pseudoconvex. 
\end{theorem}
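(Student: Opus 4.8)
The plan is to deduce Theorem~\ref{thm:bergman} from the general gap theorems (Theorems~\ref{thm:cont} and~\ref{thm:upper_cont}) by transferring the Bergman-curvature hypothesis to the affine rescaling limits of $\Omega$. Argue by contradiction: suppose $\Omega$ satisfies the curvature hypothesis but is not strongly pseudoconvex, so the Levi form of $\partial\Omega$ degenerates at some $\xi_0 \in \partial\Omega$. Fix $p_n \to \xi_0$ and choose affine automorphisms $A_n$ of $\Cb^d$, normalizing $\Omega$ at $p_n$ in the standard way, so that $A_n p_n \to q_0$ for a fixed $q_0$ and $A_n(\Omega) \to \widehat{\Omega}$ in the local Hausdorff topology, where $\widehat{\Omega}$ is a convex domain with non-compact automorphism group. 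Since $\partial\Omega$ is $C^\infty$ and weakly pseudoconvex at $\xi_0$, the model $\widehat{\Omega}$ is \emph{not} biholomorphic to $\Bc$, and Theorems~\ref{thm:cont} and~\ref{thm:upper_cont} reduce strong pseudoconvexity of $\Omega$ precisely to the complex geometry of such models; so it suffices to contradict ``$\widehat{\Omega}\not\cong\Bc$''.

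\textbf{Transferring the bound.} The holomorphic sectional curvature of the Bergman metric is a biholomorphic invariant and the $A_n$ are biholomorphisms, so — since $p_n$ eventually leaves every compact subset of $\Omega$ — the hypothesis gives $\max_{v\neq 0}\abs{H_{b_{A_n\Omega}}(v) + 4/(d+1)} \leq \epsilon$ at $A_n p_n$, and, letting the base point range over $\Omega$, at every point of $A_n\Omega$ coming from outside the fixed compact set; in the limit this is all of $\widehat{\Omega}$. The analytic ingredient making this rigorous is the stability of the Bergman kernel under local Hausdorff convergence of convex domains: convex domains enjoy uniform two-sided Bergman-kernel bounds (equivalently a uniform squeezing bound), so a normal-families argument gives $K_{A_n\Omega} \to K_{\widehat{\Omega}}$ in $C^\infty_{\mathrm{loc}}$, hence $b_{A_n\Omega} \to b_{\widehat{\Omega}}$ and convergence of the associated curvature tensors. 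Therefore $\abs{H_{b_{\widehat{\Omega}}}(v) + 4/(d+1)} \leq \epsilon$ for all $0 \neq v \in T\widehat{\Omega}$. If instead $\widehat{\Omega}$ has degenerate Bergman metric — for instance if it contains a complex affine line — then $b_{A_n\Omega}$ already degenerates in the rescaled coordinates, forcing $H_{b_\Omega}$ out of every neighborhood of $-4/(d+1)$ along $p_n$; so one may assume $b_{\widehat{\Omega}}$ is nondegenerate.

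\textbf{Closing the argument, and the main obstacle.} Now $\widehat{\Omega}$ is a convex domain, with non-compact automorphism group and nondegenerate Bergman metric, whose Bergman holomorphic sectional curvature is everywhere within $\epsilon$ of $-4/(d+1)$, yet $\widehat{\Omega}\not\cong\Bc$; Theorems~\ref{thm:cont} and~\ref{thm:upper_cont} rule this out once $\epsilon=\epsilon(d)$ is small enough, which is the desired contradiction. The heart of that statement — and the step I expect to be the main obstacle — is a rigidity/compactness argument: were there no such $\epsilon(d)$, one would obtain convex domains $\widehat{\Omega}_k$ with these properties for $\epsilon_k \downarrow 0$; renormalizing, passing to a further local Hausdorff limit $\widehat{\Omega}_k \to \widehat{\Omega}_\infty$, and invoking Bergman-kernel stability again forces $H_{b_{\widehat{\Omega}_\infty}} \equiv -4/(d+1)$, hence $\widehat{\Omega}_\infty \cong \Bc$ by the Hawley--Igusa theorem~\cite{H1953,I1954} (the domains in play being simply connected and Bergman-complete). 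To turn this into a contradiction one needs that ``$\cong\Bc$'' is open among convex domains with non-compact automorphism group in the local Hausdorff topology — i.e.\ that a non-ball such domain carries a limit-stable obstruction to being the ball, such as a complex affine disk in its boundary or higher-order contact of the boundary with a complex line at some point. Verifying that this obstruction persists in the limit, using the $C^\infty$ smoothness of $\partial\Omega$ together with the structure theory of convex domains with large automorphism group, is the crux of the proof.
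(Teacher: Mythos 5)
Your overall shape (rescale at a non--strongly-pseudoconvex boundary point, transfer the curvature pinching to the limit model, then use compactness plus rigidity) parallels the paper, but two essential steps are missing, and one of them is exactly what the paper's machinery is designed to avoid. First, your analytic ingredient --- that local Hausdorff convergence $A_n\Omega \to \widehat{\Omega}$ forces $\kappa_{A_n\Omega} \to \kappa_{\widehat{\Omega}}$ and hence $b_{A_n\Omega} \to b_{\widehat{\Omega}}$ in $C^\infty_{\mathrm{loc}}$ --- is asserted, not proved. Montel plus domain monotonicity (and the uniform comparability $b_\Omega \asymp k_\Omega$ for convex domains) gives a subsequential limit of the kernels together with uniform bounds on all derivatives, but identifying that limit with the Bergman kernel of the (unbounded, non-smooth) model $\widehat{\Omega}$ is a Ramadanov-type statement requiring a separate argument. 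The paper deliberately sidesteps this: it introduces the class $\Gc_M(\Omega)$ of bounded-geometry K{\"a}hler metrics, proves only the uniform bound $b_\Omega \in \Gc_M(\Omega)$ (Proposition~\ref{prop:bergman}, via Frankel's compactness and a normal-families argument on the kernels), and then works with the intrinsic upper semicontinuous function $h_M$, for which it suffices that \emph{some} metric in $\Gc_M(\widehat{\Omega})$ with nearly point-independent holomorphic sectional curvature survives in the limit (Lemma~\ref{lem:normal_family}) --- it is never claimed that this limit metric is $b_{\widehat{\Omega}}$. Your route needs the stronger stability statement, so either supply it or reformulate along the paper's lines; note also that the rigidity you then invoke needs completeness and an argument that pointwise-pinched curvature passes to exactly constant curvature in a second limit (the paper uses Kobayashi completeness via Barth and the Kobayashi--Nomizu theorems rather than Hawley--Igusa directly).

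Second, the step you yourself label ``the crux'' --- that the limit models are never biholomorphic to $\Bc$ and that this obstruction persists under a further local Hausdorff limit --- is left unproved, and you cannot get it from Theorems~\ref{thm:cont} or~\ref{thm:upper_cont}: those are stated for \emph{bounded} convex domains with $C^\infty$ boundary and do not apply to the unbounded models $\widehat{\Omega}$ (nor do they ``reduce strong pseudoconvexity of $\Omega$ to the complex geometry of such models''; that reduction is an input to their proofs, not their content). In the paper this crux is precisely Theorems~\ref{thm:rescale_finite_type}, \ref{thm:rescale_inf_type} and~\ref{thm:L_is_cpct}: the rescaled limits at non--strongly-pseudoconvex points land in an explicit family $\Lb$ which is compact in $\Xb_{d}$, and no member of $\Lb$ is biholomorphic to the ball --- by Coupet--Pinchuk for the finite-type polynomial models $\{x < 1 - P(z)\}$ with $m \neq (2,\dots,2)$, and by failure of Gromov hyperbolicity of the Kobayashi metric when the boundary contains an affine disk in the infinite-type case. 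Without an argument of this kind (a limit-stable certificate that the models are not the ball), your contradiction does not close, so as written the proposal has a genuine gap at both the Bergman-stability step and the final rigidity/compactness step.
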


\subsection{K{\"a}hler metrics with bounded geometry}

Theorem~\ref{thm:bergman} actually holds for a much larger class of K{\"a}hler metrics with bounded geometry. Before stating our result we will need to rigorously define what we mean by ``K{\"a}hler metrics with bounded geometry.''

\begin{definition}
Suppose $\Omega \subset \Cb^d$ is a bounded domain and $M > 1$. Let $\Gc_M(\Omega)$ be the set of K{\"a}her metrics $g$ on $\Omega$ (with respect to the standard complex structure)
 with the following properties:
\begin{enumerate}
\item $g$ is a $C^2$ metric, 
\item For all $z \in \Omega$ and $v \in \Cb^d$,  
\begin{align*}
\frac{1}{M} g_z(v,v) \leq k_\Omega(z;v) \leq M g_z(v,v)
\end{align*}
where $k_\Omega$ is the infinitesimal Kobayashi metric on $\Omega$,
\item If $X, v, w \in \Cb^d$ then 
\begin{align*}
\abs{X(g_z(v,w))} \leq M k_\Omega(z;X)k_\Omega(z;v)k_\Omega(z;w),
\end{align*}
\item If $X,Y, v, w \in \Cb^d$ then 
\begin{align*}
\abs{Y(X(g_z(v,w)))} \leq M k_\Omega(z;Y)k_\Omega(z;X)k_\Omega(z;v)k_\Omega(z;w),
\end{align*} 
\item If $X,Y, v, w \in \Cb^d$ and $z_1,z_2 \in \Omega$, then 
\begin{align*}
\abs{Y(X(g_{z_1}(v,w)))-Y(X(g_{z_2}(v,w)))} \leq Mk_\Omega(z;Y)k_\Omega(z;X)k_\Omega(z;v)k_\Omega(z;w) K_\Omega(z_1,z_2).
\end{align*} 
where $K_\Omega$ is the Kobayashi distance on $\Omega$.
\end{enumerate}
\end{definition}

\begin{remark} \ \begin{enumerate}
\item The definition above essentially says that $\Gc_M(\Omega)$ is the set of all K{\"a}hler metrics  whose second derivative is bounded and Lipschitz with respect to the Kobayashi metric. 
\item Here we use the standard notation: if $f:\Omega \rightarrow \Cb$ is a $C^1$ function and $X \in \Cb^d$ is some vector, then we define the function $X(f): \Omega \rightarrow \Cb$ by 
\begin{align*}
X(f)(z) =\left. \frac{d}{dt} \right|_{t=0} f(z+tX).
\end{align*}
\item We will prove that for any $d > 0$ there exists an $M = M(d) > 0$ so that: if $\Omega \subset \Cb^d$ is a bounded convex domain in $\Cb^d$, then $b_\Omega \in \Gc_M(\Omega)$. 
\end{enumerate}
\end{remark}

Theorem~\ref{thm:bergman} will be a consequence of the following more general result: 

\begin{theorem}\label{thm:gen_riem}(see Section~\ref{sec:gen_riem})
For any $d > 0$ and $M > 1$, there exists some $\epsilon = \epsilon(M,d) > 0$ so that: if $\Omega \subset \Cb^d$ is a bounded convex domain with $C^\infty$ boundary and there exists a metric $g \in \Gc_M(\Omega)$ with
\begin{align*}
\max_{v,w \in T_z \Omega \setminus \{0\}} \abs{H_{g}(v) - H_g(w)} \leq \epsilon
\end{align*}
outside a compact subset of $\Omega$, then $\Omega$ is strongly pseudoconvex.
\end{theorem}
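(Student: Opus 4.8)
The proof will argue by contradiction. Suppose the theorem fails: then for $\epsilon = 1/n$ there is a smoothly bounded convex domain $\Omega_n$ which is \emph{not} strongly pseudoconvex, yet carries a metric $g_n \in \Gc_M(\Omega_n)$ whose holomorphic sectional curvature pinch $\max_{v,w} \abs{H_{g_n}(v) - H_{g_n}(w)}$ is at most $1/n$ outside some compact set $C_n \subset \Omega_n$. Since $\Omega_n$ is convex and not strongly pseudoconvex, its boundary contains a point of weak pseudoconvexity, and after applying an affine automorphism of $\Cb^d$ we may normalize so that $0 \in \partial\Omega_n$ is such a point with a controlled defining function. The strategy is to rescale: choose points $p_n \in \Omega_n$ escaping to $0 \in \partial\Omega_n$ (eventually outside $C_n$, which we can arrange since $C_n$ is compact in $\Omega_n$) along the inner normal, and apply a sequence of affine dilations $A_n$ adapted to the geometry of $\partial\Omega_n$ near $0$. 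By the standard normal-families / Gromov–Hausdorff compactness machinery for convex domains (in the vein of Frankel, Kim–Krantz, and especially the work on the Kobayashi metric of convex domains), a subsequence of $A_n(\Omega_n)$ converges, locally uniformly and in the Hausdorff sense, to a limiting convex domain $\Omega_\infty$ which — because $0$ was a point of \emph{weak} pseudoconvexity — is \emph{not} biholomorphic to the unit ball; in fact it contains a nontrivial holomorphic object (e.g.\ a complex affine disk in its boundary, or is a product-like domain), reflecting the degeneracy of the Levi form.

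\smallskip

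\textbf{Transporting the metrics to the limit.} The heart of the argument is to show that the metrics $g_n$, transported via $A_n$, subconverge to a K\"ahler metric $g_\infty$ on $\Omega_\infty$. Here the five conditions defining $\Gc_M(\Omega)$ are exactly what is needed: condition (2) says $g_n$ is uniformly comparable to the Kobayashi metric $k_{\Omega_n}$, which is invariant under the biholomorphisms $A_n$ and converges (by stability of the Kobayashi metric under Hausdorff convergence of convex domains) to $k_{\Omega_\infty}$; conditions (3) and (4) give uniform $C^2$ bounds on $g_n$ measured in the Kobayashi metric, hence uniform $C^2$ bounds for the pulled-back metrics $A_n^* g_n$ on compact subsets of $\Omega_\infty$; and condition (5) gives equicontinuity of the second derivatives, so by Arzel\`a–Ascoli we extract a $C^{1,1}$ (in fact $C^2$ after a further argument) limit $g_\infty$, which is again K\"ahler and still comparable to $k_{\Omega_\infty}$, in particular nondegenerate and complete-ish enough to have a well-defined curvature tensor a.e. Crucially, the curvature pinching passes to the limit: since $p_n$ is outside $C_n$ and $A_n(p_n)$ stays in a fixed compact part of $\Omega_\infty$, the bound $\abs{H_{g_n}(v)-H_{g_n}(w)} \le 1/n$ holds on larger and larger Kobayashi-balls around $A_n(p_n)$, so in the limit $H_{g_\infty}(v)$ is \emph{independent of $v$} at every point of $\Omega_\infty$ — i.e.\ $g_\infty$ has pointwise-constant holomorphic sectional curvature.

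\smallskip

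\textbf{Rigidity of the limit.} Now I invoke a Schur-type / Hawley–Igusa-type rigidity: a K\"ahler metric on a (connected) complex manifold whose holomorphic sectional curvature is constant \emph{at each point} in fact has globally constant holomorphic sectional curvature (this is the classical Schur lemma for holomorphic sectional curvature, valid in complex dimension $\ge 2$), and that constant must be negative since $g_\infty$ is comparable to the Kobayashi metric of a bounded convex domain (so $\Omega_\infty$ is Kobayashi hyperbolic, forcing the constant to be $\le 0$, and one rules out $0$ because a complete nonpositively-but-not-negatively curved situation would force $\Omega_\infty$ to split off a flat factor, contradicting hyperbolicity, or more directly because the curvature constant is pinned down by the comparison constant $M$ away from $0$). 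Then by Hawley–Igusa (applied after checking $g_\infty$ is complete — which follows from $k_{\Omega_\infty}$-comparability and completeness of the Kobayashi metric on convex domains, plus a regularity bootstrap to get $g_\infty$ genuinely $C^\infty$ or at least smooth enough), $\Omega_\infty$ is biholomorphic to the unit ball. This contradicts the fact that $\Omega_\infty$, arising as a rescaling limit at a weakly pseudoconvex boundary point of a convex domain, is \emph{not} biholomorphic to the ball (it admits nontrivial automorphisms of parabolic type along a positive-dimensional "null" direction, or its Kobayashi metric is non-Bergman-like). This contradiction proves the theorem.

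\smallskip

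\textbf{Main obstacle.} The delicate point is the middle step: extracting a genuine K\"ahler limit metric $g_\infty$ with enough regularity that "constant holomorphic sectional curvature" is meaningful and Hawley–Igusa applies. Conditions (3)--(5) are tailored to give $C^{1,1}$ limits, but Hawley–Igusa is usually stated for smooth (or real-analytic) metrics; so either one proves a low-regularity version of the Hawley–Igusa / space-form classification (constant holomorphic sectional curvature $\Rightarrow$ locally the complex hyperbolic metric, which is real-analytic, hence $g_\infty$ upgrades automatically), or one argues that a $C^{1,1}$ K\"ahler metric with constant holomorphic sectional curvature is, via the PDE it satisfies, automatically smooth by elliptic regularity. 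A secondary obstacle is carefully verifying that the rescaling limit $\Omega_\infty$ is not the ball — this requires the now-standard but technical analysis of rescalings of convex domains at weakly pseudoconvex points (controlling the order of contact of $\partial\Omega_n$ with complex lines), and showing this passes to the limit. Both obstacles are surmountable with existing techniques, but they are where the real work lies.
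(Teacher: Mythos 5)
Your skeleton parallels the paper's: rescale at non--strongly pseudoconvex boundary points, use conditions (2)--(5) of $\Gc_M$ to extract a limit K\"ahler metric comparable to the Kobayashi metric whose holomorphic sectional curvature is constant at each point, then apply Schur-type rigidity and the classification of complex space forms to force the limit domain to be the ball and derive a contradiction. The paper packages the same idea through the intrinsic function $h_M$ (defined via pinching on Kobayashi balls), proves $-h_M$ is upper semicontinuous and intrinsic (Lemma~\ref{lem:normal_family}), proves $h_M(\Omega,x)=0$ forces $\Omega\cong\Bc$ (Proposition~\ref{prop:rigidity}), and then quotes the general gap theorem (Theorem~\ref{thm:upper_cont}); your version inlines that reduction. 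Two small repairs: conditions (3)--(5) actually give locally uniform convergence of the \emph{second} derivatives (Lipschitz bound (5) plus Arzel\`a--Ascoli), so the limit is a genuine $C^2$ metric in $\Gc_M$, not merely $C^{1,1}$; and your argument that the limiting constant is negative is not sound as stated --- Kobayashi hyperbolicity does not by itself force nonpositive holomorphic sectional curvature of a comparable metric, and the constant is not ``pinned down by $M$.'' The clean route, as in Proposition~\ref{prop:rigidity}, is to use the full Kobayashi--Nomizu classification (projective space, $\Cb^d$, or $\Bc$) and rule out the first two by noncompactness and nondegeneracy of the Kobayashi metric; then no separate sign argument or Hawley--Igusa is needed.

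The genuine gap is the step you defer as a ``secondary obstacle'': the claim that the limit $\Omega_\infty$ of $A_n(\Omega_n)$ is not biholomorphic to $\Bc$ ``because $0$ was a point of weak pseudoconvexity.'' Since the domains $\Omega_n$ vary with $n$, $\Omega_\infty$ is not a blow-up of a single fixed domain at a weakly pseudoconvex point, so the standard single-domain rescaling analysis does not apply directly, and moreover a single affine map per domain does not produce a model: for each fixed $\Omega_n$ one needs a whole sequence of rescalings converging to a normalized model domain, and then a diagonal or double-limit argument. Concretely, one must show (a) each $\Omega_n$ rescales to a model lying in a fixed normalized family --- a polynomial model of type $m^{(n)}\neq(2,\dots,2)$ at finite-type points, or a domain with an affine disk in its boundary at infinite-type points (Theorems~\ref{thm:rescale_finite_type} and~\ref{thm:rescale_inf_type}); and (b) this family is compact and contains no domain biholomorphic to $\Bc$, so that the second limit cannot sneak back to the ball (Theorem~\ref{thm:L_is_cpct}). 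Part (b) requires real input that your sketch does not supply: integrality of the type (so $m_d^{(n)}\geq 3$ cannot degenerate to $2$ in the limit), the Coupet--Pinchuk theorem to exclude the ball among finite-type polynomial models (these need not contain boundary disks and are not ``product-like''), and the non-Gromov-hyperbolicity of the Kobayashi metric to exclude it for domains with an analytic disk in the boundary. This is Sections 3--4 of the paper and is where the main work of the theorem actually lies; without it the contradiction at the end of your argument is not established.
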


\subsection{The general theorem}

Theorems~\ref{thm:squeezing},~\ref{thm:bergman}, and~\ref{thm:gen_riem} are particular cases of a more general gap theorem which we now describe. In order to state our main result we need to define the space of convex domains and intrinsic functions on them.

\begin{definition} Let $\Xb_d$ be the set of convex domains in $\Cb^d$ which do not contain a complex affine line and let $\Xb_{d,0}$ be the set of pairs $(\Omega,x)$ where $\Omega \in \Xb_d$ and $x \in \Omega$. \end{definition}

\begin{remark}\label{rmk:barth} When $\Omega$ is a convex domain, Barth~\cite{B1980} proved that the following are equivalent: 
\begin{enumerate}
\item $\Omega$ contains no complex affine lines,
\item the Kobayashi metric is non-degenerate, 
\item the Kobayashi metric is Cauchy complete.
\end{enumerate}
Thus, from a complex geometric point of view, it is natural to study the convex domains which do not contain any affine lines. 
\end{remark}

\begin{definition} A function $f:\Xb_{d,0} \rightarrow \Rb$ is called \emph{intrinsic} if  $f(\Omega_1, p_1) = f(\Omega_2, p_2)$ whenever there exists a biholomorphism $\varphi: \Omega_1 \rightarrow \Omega_2$ with $\varphi(p_1) = p_2$. 
\end{definition}

\begin{example} There are many examples of intrinsic functions, for instance the functions:
\begin{align*}
(\Omega, x) \rightarrow s_\Omega(x)
\end{align*}
and 
\begin{align*}
(\Omega, x) \rightarrow \max_{v \in T_x \Omega \setminus \{0\}} \abs{H_{b_\Omega}(v) - \frac{-4}{d+1}}
\end{align*}
are intrinsic.
\end{example}

Using the fact that the unit ball is a homogeneous domain we have the following:

 \begin{observation} 
 If $\Bc \subset \Cb^d$ is the unit ball and $f:\Xb_{d,0} \rightarrow \Rb$ is an intrinsic function, then $f(\Bc, x) = f(\Bc, 0)$ for all $x \in \Bc$. 
 \end{observation}
 
The set $\Xb_{d,0}$ has a topology coming from the local Hausdorff topology (see Section~\ref{sec:space-convex} below) and when a intrinsic function is continuous in this topology we obtain a generalized version of Klembeck's Theorem for convex domains:
 
 \begin{proposition}\label{prop:gen_Klembeck}(see Section~\ref{sec:klembeck})
 Suppose $f:\Xb_{d,0} \rightarrow \Rb$ is a continuous intrinsic function and $\Omega$ is a bounded convex domain with $C^2$ boundary. If $\xi \in \partial \Omega$ is a strongly pseudoconvex point of $\partial\Omega$, then 
\begin{align*}
\lim_{z \rightarrow \xi} f(\Omega, z) = f(\Bc, 0).
\end{align*}
\end{proposition}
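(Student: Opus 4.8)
The plan is to exploit the well-known fact that near a strongly pseudoconvex boundary point, convex domains are, after suitable affine rescalings, converging (in the local Hausdorff topology) to the unit ball. Concretely, fix a sequence $z_n \to \xi$ inside $\Omega$. Since $\xi$ is a strongly pseudoconvex point of a $C^2$ convex boundary, I would choose affine maps $A_n \in \Aff(\Cb^d)$ normalizing the geometry: translate $z_n$ to a fixed basepoint, rotate so that the real normal direction at the nearest boundary point is standard, and dilate anisotropically (parabolically) so that $A_n(\Omega)$ stays comparable to a fixed ball. The standard scaling analysis for strongly pseudoconvex points (going back to Pinchuk, and in the convex setting used by many authors) shows that $(A_n\Omega, A_n z_n)$ converges in the local Hausdorff topology to $(\Bc', q)$ for some ball $\Bc'$ and interior point $q$ — or, after a further normalization, to $(\Bc, 0)$. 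The point of passing to $\Xb_{d,0}$ and requiring no affine lines is exactly that this limit is a legitimate element of $\Xb_{d,0}$ and the convergence is in the topology for which $f$ is assumed continuous.

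Next I would invoke the two hypotheses on $f$. Because $f$ is intrinsic, $f(A_n\Omega, A_nz_n) = f(\Omega, z_n)$ for every $n$ (affine maps are biholomorphisms). Because $f$ is continuous in the local Hausdorff topology and $(A_n\Omega, A_nz_n) \to (\Bc, 0)$, we get $f(A_n\Omega, A_nz_n) \to f(\Bc, 0)$. Combining, $f(\Omega, z_n) \to f(\Bc, 0)$. Since the sequence $z_n \to \xi$ was arbitrary, this yields $\lim_{z\to\xi} f(\Omega, z) = f(\Bc,0)$, which is the claim. (The Observation that $f(\Bc, x)=f(\Bc,0)$ for all $x$ is what makes the normalization of the limit basepoint harmless, so one does not need to be careful about exactly which interior point of the ball the basepoint converges to.)

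The main obstacle — and the technical heart of the argument — is establishing the scaling convergence $(A_n\Omega, A_nz_n) \to (\Bc,0)$ rigorously in the local Hausdorff topology, using only $C^2$ regularity and strong pseudoconvexity (equivalently, here, strong convexity) at the single point $\xi$. I would proceed by: (i) writing $\partial\Omega$ near $\xi$ as a graph $\Imag w = q(z, \Real w) + o(\abs{z}^2 + \abs{\Real w}^2)$ where $q$ is a positive-definite quadratic form in the relevant variables (positive-definiteness is exactly strong pseudoconvexity at $\xi$ for a convex hypersurface); (ii) choosing $A_n$ to fix the quadratic model and push the error terms to $0$; (iii) checking that $A_n\Omega$ lies between two fixed balls for large $n$ (an upper bound from convexity plus the graph estimate, a lower bound from an interior ball at $\xi$), so that the Hausdorff limit exists along subsequences and any limit is a convex domain squeezed between balls whose boundary agrees with the quadric model — forcing the limit to be a ball, hence (after the final affine normalization and using the Observation) giving $f$-value $f(\Bc,0)$. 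Since every subsequential limit gives the same value, the full limit exists. A secondary point to address is that one must know the limit domain contains no affine line so that it genuinely lies in $\Xb_d$ and the continuity hypothesis on $f$ applies; this follows because the limit is contained in a ball.

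Finally, I should note that this Proposition is the ``easy half'' foreshadowed by the paper's title: it is the generalized Klembeck direction (asymptotic geometry near a nice point looks like the ball), whereas Theorems~\ref{thm:squeezing},~\ref{thm:bergman},~\ref{thm:gen_riem} are the converse gap statements and require substantially more work (a compactness/renormalization argument producing a limiting domain that is an affine image of the ball yet not strongly pseudoconvex, yielding a contradiction). For the present Proposition the only inputs are: the scaling lemma at a strongly pseudoconvex point of a $C^2$ convex boundary, the definition of intrinsic function, and continuity in the local Hausdorff topology — so once the scaling convergence is in hand the proof is three lines.
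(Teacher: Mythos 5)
Your overall skeleton (rescale at $\xi$ by affine maps, use intrinsicness to transfer $f$-values, use continuity in the local Hausdorff topology, use homogeneity of the ball to ignore the basepoint) is exactly the paper's strategy, but the key technical claim as you state it is false, and it is precisely the step you call the ``technical heart.'' You claim one can choose $A_n \in \Aff(\Cb^d)$ with $(A_n\Omega, A_nz_n) \to (\Bc,0)$, with $A_n\Omega$ trapped between two fixed balls. This cannot happen unless $\Omega$ is itself an ellipsoid. Indeed, if convex sets $A_n\Omega$ converge in the local Hausdorff topology to a \emph{bounded} convex limit such as $\Bc$, convexity upgrades this to genuine Hausdorff convergence: once $A_n\Omega$ nearly contains $\Bc$, any point of $A_n\Omega$ at distance $\geq 2$ from the origin would force, by taking the convex hull with the inner ball, points inside $B_2(0)$ at definite distance from $\overline{\Bc}$, contradicting the $R=2$ semi-norm estimate. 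So eventually $(1-\epsilon)\Bc \subset A_n\Omega \subset (1+\epsilon)\Bc$; since $\Omega$ is a fixed bounded convex body with nonempty interior, the linear parts of $A_n$ and their inverses are then uniformly bounded, a subsequence of $A_n$ converges to an invertible affine map $A$ with $A\Omega = \Bc$, i.e.\ $\Omega$ is an affine image of the ball. (The case $d=1$ makes the obstruction vivid: complex affine maps of $\Cb$ are similarities and cannot change the shape of $\Omega$ at all.) Relatedly, your step (iii) is incompatible with the parabolic scaling you propose in step (ii): the anisotropic rescalings at a strongly convex point blow up the tangential directions, so the images are \emph{not} uniformly bounded.

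The fix is exactly what the paper does (Section~\ref{sec:klembeck}): by Pinchuk's rescaling method (the paper cites the proof of Theorem 2 in~\cite{G1997} for the convex $C^2$ case), one gets affine maps with $A_n\Omega \to \Uc = \{ \Imaginary(z_1) > \sum_{i=2}^d \abs{z_i}^2\}$ in the local Hausdorff topology and $A_nx_n = (i,0,\dots,0)$; the limit is the \emph{unbounded} quadric model, not a ball, and it is not affinely equivalent to $\Bc$. One then checks $\Uc$ contains no complex affine lines (so $(\Uc,(i,0,\dots,0)) \in \Xb_{d,0}$ and continuity of $f$ applies), and uses the intrinsic hypothesis for the \emph{non-affine} Cayley transform $\Uc \to \Bc$ together with the Observation that $f(\Bc,\cdot) \equiv f(\Bc,0)$ to conclude $f(\Uc,(i,0,\dots,0)) = f(\Bc,0)$. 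With that substitution, the rest of your argument (intrinsicness under $A_n$, continuity along the convergent sequence, arbitrariness of the sequence $z_n \to \xi$) goes through verbatim; without it, the proof as written breaks at the claimed convergence to $(\Bc,0)$.
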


The main result of this paper is the following two converses to the above proposition: 

\begin{theorem}\label{thm:cont}(see Section~\ref{sec:main_result})
Suppose that $f:\Xb_{d,0} \rightarrow \Rb$ is a continuous intrinsic function with the following property: if $\Omega \in \Xb_d$ and $f(\Omega, x) = f(\Bc, 0)$ for all $x \in \Omega$, then $\Omega$ is biholomorphic to $\Bc$. 

Then there exists some $\epsilon=\epsilon(d,f) > 0$ so that:  if $\Omega \subset \Cb^d$ is a bounded convex domain with $C^\infty$ boundary and 
\begin{align*}
\abs{f(\Omega, z) -f(\Bc,0)} \leq \epsilon
\end{align*}
outside some compact subset of $\Omega$, then $\Omega$ is strongly pseudoconvex and thus 
\begin{align*}
\lim_{z \rightarrow \partial \Omega} f(\Omega, z) = f(\Bc,0).
\end{align*}
\end{theorem}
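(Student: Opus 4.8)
The plan is to argue by contradiction using a compactness/rescaling argument in the local Hausdorff topology on $\Xb_{d,0}$. Suppose the conclusion fails; then for every $n$ we can find a bounded convex domain $\Omega_n$ with $C^\infty$ boundary, a compact subset $K_n \subset \Omega_n$ with $\abs{f(\Omega_n,z) - f(\Bc,0)} \leq 1/n$ for all $z \in \Omega_n \setminus K_n$, yet $\Omega_n$ is not strongly pseudoconvex. Since $\Omega_n$ fails to be strongly pseudoconvex at some boundary point, and because the condition on $f$ gives us no control at points of $K_n$, the idea is to pick a boundary point $\xi_n$ that is not strongly pseudoconvex and then select a sequence of interior points $p_n \to \xi_n$ approaching it. The key is that, after applying affine automorphisms of $\Cb^d$, we can rescale so that the pointed domains $(\Omega_n, p_n)$ converge in the local Hausdorff topology to some limit $(\Omega_\infty, p_\infty) \in \Xb_{d,0}$ — here one uses that convex domains without affine lines form a compact-ish family under rescaling (this is the standard Frankel/Kim–Krantz type normalization, and it requires Remark~\ref{rmk:barth} to know the limit is again in $\Xb_d$, i.e. contains no affine line, which should follow because $p_n$ is at bounded Kobayashi distance from a fixed reference structure after rescaling).

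Next I would use continuity of $f$ together with the hypothesis $\abs{f(\Omega_n, z) - f(\Bc,0)} \leq 1/n$ off $K_n$. Provided the rescaled basepoints $p_n$ stay outside $K_n$ (which I can arrange since $\xi_n \in \partial \Omega_n$ while $K_n$ is compact in the interior), and more importantly provided that the rescaled copies of $K_n$ escape to infinity or otherwise disappear in the limit, we get that for the limit domain $f(\Omega_\infty, x) = f(\Bc, 0)$ at least for $x = p_\infty$. To upgrade this to \emph{all} $x \in \Omega_\infty$, I would exploit the scaling: a further rescaling centered at a different interior point of $\Omega_\infty$ can be realized as a limit of rescalings of the $\Omega_n$ with basepoints still escaping $K_n$, so the value $f(\Bc,0)$ propagates to every point of $\Omega_\infty$. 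Then the hypothesis on $f$ forces $\Omega_\infty$ to be biholomorphic to $\Bc$.

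Now comes the contradiction. On one hand $\Omega_\infty \cong \Bc$, so $\Omega_\infty$ has smooth strongly pseudoconvex (even homogeneous) boundary near $p_\infty$'s escape point. On the other hand, the rescaling was centered at points approaching a \emph{non}-strongly-pseudoconvex boundary point $\xi_n$, and a careful choice of the scaling (stretching the ``weak'' directions) should force the limit domain $\Omega_\infty$ to contain a complex affine line, or at least to have a boundary that is non-strictly convex — e.g. the limit contains a whole complex line segment in its boundary, which is impossible for the ball. The cleanest route is: if $\xi_n$ is not strongly pseudoconvex then the real Hessian of a defining function degenerates in some complex tangential direction, and by scaling that direction appropriately the Hausdorff limit $\Omega_\infty$ is a convex domain whose boundary contains an affine disc or line through the limit of $\xi_n$; this contradicts $\Omega_\infty \in \Xb_d$ (no affine lines) or directly contradicts $\Omega_\infty \cong \Bc$.

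The main obstacle I anticipate is the \emph{bookkeeping of the rescaling}: one must simultaneously (a) normalize so that $(\Omega_n, p_n)$ converges to a nondegenerate limit in $\Xb_{d,0}$, (b) ensure the rescaled $K_n$ do not interfere, so the $f$-estimate survives in the limit at every point, and (c) arrange that the non-strong-pseudoconvexity of $\xi_n$ actually manifests as an affine line or flat piece in $\partial\Omega_\infty$ rather than being washed out by the normalization in (a). Balancing (a) and (c) is delicate because the normalization that makes the limit nondegenerate may be incompatible with the stretching that exposes the flatness; the resolution is presumably to choose $p_n$ approaching $\xi_n$ at a rate matched to the order of vanishing of the Levi form, so that a single affine rescaling achieves both. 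I would also need the (stated) fact that $b_\Omega \in \Gc_M(\Omega)$ is \emph{not} needed here — Theorem~\ref{thm:cont} is the abstract version and only uses continuity of $f$ and the rigidity hypothesis — so the real content is entirely in the convex-geometry scaling argument plus the topology on $\Xb_{d,0}$ developed in Section~\ref{sec:space-convex}.
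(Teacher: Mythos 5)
Your overall scheme (rescale at non-strongly-pseudoconvex boundary points, pass to a limit in $\Xb_{d,0}$, use continuity of $f$ to force the limit to satisfy $f\equiv f(\Bc,0)$, hence be biholomorphic to $\Bc$, and contradict this with the geometry of the limit) is the paper's scheme, but the step where you actually produce the contradiction is wrong, and it is the heart of the matter. You claim that non-strong pseudoconvexity of $\xi_n$ can be converted, by a suitable stretching, into a limit domain that contains a complex affine line, or has an affine disc (or at least a flat piece) in its boundary, and that this is incompatible with being biholomorphic to the ball. This fails precisely at finite-type points: if the line type of $\xi_n$ is, say, $(2,\dots,2,4)$, the natural rescaled limit is a weighted homogeneous polynomial domain $\{(x+iy,z): x<1-P(z)\}$ with $P$ convex, nonnegative and $m$-homogeneous (Theorem~\ref{thm:rescale_finite_type}); such a domain contains no affine line and its boundary contains no analytic disc, so neither of your obstructions applies. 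Moreover, ``non-strictly convex boundary'' is not an obstruction to being biholomorphic to $\Bc$: the unbounded paraboloid realization $\{\Imaginary z_1>\abs{z_2}^2+\cdots+\abs{z_d}^2\}$ is convex, has real lines in its boundary, and is biholomorphic to the ball; and membership in $\Xb_d$ only forbids affine lines inside the domain, not in the boundary. To exclude that the finite-type model domains are balls the paper must invoke a genuine biholomorphic-inequivalence theorem (Coupet--Pinchuk), and in the infinite-type case it uses that an affine disc in the boundary destroys Gromov hyperbolicity of the Kobayashi metric; your proposal supplies neither input, so the contradiction never materializes.

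There is also a structural gap in your single diagonal limit. The paper first fixes $n$ and rescales $(\Omega_n,x_m)$ as $m\to\infty$ into a fixed compact family $\Lb$ of model domains, using the full limit in $m$ together with continuity of the Kobayashi distance under local Hausdorff convergence and properness of $K_{\Omega_n}$ to transfer the bound $\abs{f-f(\Bc,0)}\leq 1/n$ to every point of the model $\wh{\Omega}_n$; only then does it let $n\to\infty$ inside $\Lb$, where the decisive facts are that $\Lb$ is compact and that no member of $\Lb$ is biholomorphic to $\Bc$ (Theorem~\ref{thm:L_is_cpct}), the latter being stable under the Hausdorff limits that occur (a degenerating sequence of finite-type models limits onto a domain with a disc in its boundary, which is still not a ball). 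In your one-shot diagonal you must choose $p_n$ before knowing which point of the limit domain you will want to approximate, so guaranteeing that the approximating points escape the uncontrolled compact sets $K_n$ requires uniform-in-$n$ bookkeeping that you explicitly flag as ``the main obstacle'' but do not resolve; the two-step limit through the compact family $\Lb$ is exactly how the paper resolves it, and without it (or an equivalent uniformization) the propagation of $f\equiv f(\Bc,0)$ to all of $\Omega_\infty$ and the non-ball property of $\Omega_\infty$ both remain unproved.
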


Some interesting intrinsic functions, for instance the squeezing function, do not appear to be continuous on $\Xb_{d,0}$ but are upper-semicontinuous. So we will also establish the following: 

\begin{theorem}\label{thm:upper_cont}(see Section~\ref{sec:main_result})
Suppose that $f:\Xb_{d,0} \rightarrow \Rb$ is an upper semi-continuous intrinsic function with the following property: if $\Omega \in \Xb_d$ and $f(\Omega, x) \geq f(\Bc, 0)$ for all $x \in \Omega$, then $\Omega$ is biholomorphic to $\Bc$. 

Then there exists some $\epsilon=\epsilon(d,f) > 0$ so that: if $\Omega \subset \Cb^d$ is a bounded convex domain with $C^\infty$ boundary and 
\begin{align*}
f(\Omega, z) \geq f(\Bc,0) - \epsilon
\end{align*}
outside some compact subset of $\Omega$, then $\Omega$ is strongly pseudoconvex.
\end{theorem}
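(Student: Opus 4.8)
The plan is to argue by contradiction using a compactness/renormalization argument in the space $\Xb_{d,0}$ equipped with the local Hausdorff topology. Suppose the theorem fails for some fixed $d$ and some upper semi-continuous intrinsic $f$ with the stated rigidity property. Then for each $n$ there is a bounded convex domain $\Omega_n \subset \Cb^d$ with $C^\infty$ boundary such that $f(\Omega_n, z) \geq f(\Bc, 0) - 1/n$ for all $z$ outside a compact set $K_n \subset \Omega_n$, yet $\Omega_n$ is not strongly pseudoconvex. Since $\Omega_n$ is convex with smooth boundary but not strongly pseudoconvex, there is a boundary point $\xi_n \in \partial\Omega_n$ at which the Levi form degenerates (a weakly pseudoconvex point); in particular, the boundary is ``flat to second order'' along some complex tangential direction at $\xi_n$. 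The idea is to pick base points $p_n \in \Omega_n$ converging to $\xi_n$ — more precisely, going to the boundary fast enough that $p_n \notin K_n$ for large $n$ — and then apply an affine renormalization. After composing with the affine maps $A_n$ that send $p_n$ to a fixed base point and rescale appropriately (the standard ``scaling method'' for convex domains, normalizing using the Kobayashi metric or the Minkowski functional), one extracts a subsequential limit $(\wh\Omega, \wh p)$ of $(A_n \Omega_n, A_n p_n)$ in $\Xb_{d,0}$. Here one needs the facts, presumably established in the body of the paper, that the affine images $A_n\Omega_n$ stay in a compact part of $\Xb_{d,0}$ (no degeneration to a domain containing a line, because of the uniform normalization) and that the limit $\wh\Omega$ is again in $\Xb_d$.

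The second step is to identify the limit domain $\wh\Omega$ and use intrinsicness and upper semi-continuity of $f$. Because each $p_n$ is eventually outside $K_n$, the hypothesis gives $f(\Omega_n, p_n) \geq f(\Bc, 0) - 1/n$, and by intrinsicness $f(A_n\Omega_n, A_n p_n) = f(\Omega_n, p_n) \geq f(\Bc,0) - 1/n$. More generally, for any fixed $z \in \wh\Omega$, the points $A_n^{-1}(z)$ lie in $\Omega_n$ and converge to a boundary point, hence eventually lie outside $K_n$, so we also get $f(A_n \Omega_n, z) \geq f(\Bc,0) - 1/n$ for $n$ large. Now passing to the limit using upper semi-continuity of $f$ along the convergence $(A_n\Omega_n, z) \to (\wh\Omega, z)$ in $\Xb_{d,0}$, we obtain $f(\wh\Omega, z) \geq f(\Bc,0)$ for \emph{every} $z \in \wh\Omega$. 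By the rigidity hypothesis on $f$, this forces $\wh\Omega$ to be biholomorphic to the unit ball $\Bc$.

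The third step is to derive a contradiction from the geometry of the scaling limit. Because the $\xi_n$ were weakly pseudoconvex points of convex domains and we centered the renormalization at those points, the limit domain $\wh\Omega$ should be (after passing to the limit of the defining inequalities) a convex domain whose boundary contains a nontrivial complex-affine object through the base boundary point — concretely, $\wh\Omega$ should contain an affinely embedded copy of a lower-dimensional object reflecting the vanishing of the Levi form, so $\wh\Omega$ is of the form $\Cb \times D'$ after a change of coordinates, or at any rate a convex domain that is manifestly \emph{not} biholomorphic to the ball (e.g.\ its boundary contains a nontrivial complex disk, whereas $\partial\Bc$ contains none; alternatively, $\wh\Omega$ is a product and the ball is not a product). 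This contradicts the conclusion of the previous step, completing the proof.

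The main obstacle I expect is the second step — controlling the behavior of $f$ under the scaling limit — and, feeding into it, ensuring the renormalized domains $A_n\Omega_n$ genuinely converge in $\Xb_{d,0}$ to a well-behaved limit rather than degenerating. One must choose the sequence $p_n \to \partial\Omega_n$ and the affine normalizations $A_n$ carefully so that three things hold simultaneously: (i) $p_n$ exits every compact set, so the hypothesis on $f$ applies; (ii) the limit $\wh\Omega$ lies in $\Xb_d$ (contains no affine line), which is exactly where convexity plus the choice of normalization at a boundary point is used, together with Barth's theorem (Remark~\ref{rmk:barth}); and (iii) the limit $\wh\Omega$ genuinely ``remembers'' the Levi degeneracy at $\xi_n$ so that it is not the ball. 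Point (iii) is the crux: one needs that weak pseudoconvexity at $\xi_n$, under the correct rescaling, survives in the limit as a flatness of $\partial\wh\Omega$ — this requires that the scaling rates in the complex-normal and complex-tangential directions are chosen to match the orders of vanishing, and is the technical heart of the argument. The continuous version (Theorem~\ref{thm:cont}) then follows by the same scheme, using two-sided control $|f(\Omega_n,z) - f(\Bc,0)| \le 1/n$ and continuity of $f$ in place of upper semi-continuity.
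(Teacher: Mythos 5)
Your overall skeleton (contradiction, affine rescaling at non-strongly-pseudoconvex boundary points, intrinsicness plus upper semicontinuity to force $f(\wh\Omega,\cdot)\geq f(\Bc,0)$, then the rigidity hypothesis) matches the paper, but the step you yourself call the crux is where the argument genuinely fails as proposed. You claim the scaling limit $\wh\Omega$ must contain a nontrivial complex affine disk in its boundary or be a product, and derive the contradiction from that. This is false when the Levi degeneracy at $\xi_n$ occurs at a point of \emph{finite} line type: there the correct scaling limit is a weighted homogeneous polynomial domain $\{(x+iy,z):x<1-P(z)\}$ with $P$ of type $m\neq(2,\dots,2)$ (Theorem~\ref{thm:rescale_finite_type}), e.g.\ $\{\Real z_1<1-\abs{z_2}^4\}$, which has no analytic disks in its boundary and no product structure. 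Ruling out a biholomorphism with $\Bc$ for such domains is a genuine rigidity theorem (the paper invokes Coupet--Pinchuk), while the disk-in-the-boundary mechanism only handles the infinite-type limits (where the paper instead uses failure of Gromov hyperbolicity of $(\Omega,K_\Omega)$). Moreover, since the final contradiction is obtained from a limit of the rescaled domains as $n\to\infty$, one needs the whole family of possible limit models to be a \emph{compact} subset $\Lb\subset\Xb_d$ consisting of non-ball domains (Theorem~\ref{thm:L_is_cpct}); this requires the careful normalization placing every model inside the compact set $\Kb$ and the analysis of degenerating types $m^{(n)}_d\to\infty$, none of which is supplied or replaceable by the disk/product observation.

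A second, smaller gap is your justification that $f(A_n\Omega_n,z)\geq f(\Bc,0)-1/n$ for fixed $z\in\wh\Omega$: you argue that $A_n^{-1}(z)$ ``converges to a boundary point, hence eventually lies outside $K_n$,'' but both the domain $\Omega_n$ and the compact set $K_n$ change with $n$, so this inference has no content --- $K_n$ may exhaust almost all of $\Omega_n$, and there is no single boundary to converge to. The paper's proof is structured as a double limit precisely to avoid this: for each fixed $n$ one rescales within the fixed domain $\Omega_n$ along $x_m\to\xi_n$, uses properness of $K_{\Omega_n}$ together with Theorem~\ref{thm:dist_conv} to force the approximating points $y_m$ to satisfy $d_{\Euc}(y_m,\partial\Omega_n)\to0$ (hence $y_m\notin K_n$ eventually, for that fixed $K_n$), obtaining $f(\wh\Omega_n,x)\geq f(\Bc,0)-1/n$ on all of $\wh\Omega_n$ with $\wh\Omega_n\in\Lb$; only then does one let $n\to\infty$ using compactness of $\Lb$ and upper semicontinuity. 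Your single diagonal limit could likely be repaired by choosing the rescaling index for each $n$ so that a Kobayashi ball of radius $n$ about the base point misses $K_n$, but as written the step is unjustified, and in any case the main missing ingredient remains the identification and non-ball rigidity of the limit models described above.
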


\subsection{Outline of proof} 

The key step in the proof of Theorem~\ref{thm:cont} and~\ref{thm:upper_cont} is establishing the following:

\begin{proposition}(see Theorems~\ref{thm:rescale_finite_type}, ~\ref{thm:rescale_inf_type}, and~\ref{thm:L_is_cpct})
For any $d > 0$, there exists a compact set $\Lb \subset \Xb_{d}$ with the following properties:
\begin{enumerate}
\item if $\Omega \in \Lb$, then $\Omega$ is not biholomorphic to the unit ball,
\item if $\Omega \subset \Cb^d$ is a bounded convex domain with $C^\infty$ boundary which is not strongly pseudoconvex, then there exists a sequence $x_n \in \Omega$ approaching the boundary and affine maps $A_n \in \Aff(\Cb^d)$ so that $A_n(\Omega, x_n)$ converges to $(\Omega_\infty, 0)$ where $ \Omega_\infty \in \Lb$. 
\end{enumerate}
\end{proposition}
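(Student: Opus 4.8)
The plan is to build the compact set $\Lb$ as the set of all \emph{limits} of rescalings of convex domains along sequences approaching a non-strongly-pseudoconvex boundary point, and then argue that this set is compact in the local Hausdorff topology and contains no copy of the ball. Concretely, I would proceed as follows.

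\emph{Step 1: Rescaling at a boundary point.} Suppose $\Omega$ is a bounded convex domain with $C^\infty$ boundary that fails to be strongly pseudoconvex, and let $\xi \in \partial\Omega$ be a weakly pseudoconvex boundary point (one exists by assumption). Pick any sequence $x_n \to \xi$; after applying suitable affine maps $A_n \in \Aff(\Cb^d)$ — composed of a translation taking the nearest boundary point $\xi_n$ of $x_n$ to the origin, a unitary rotation normalizing the inner normal direction, and an anisotropic dilation scaled to the (finite or infinite) type behavior of $\partial\Omega$ near $\xi$ — one extracts a subsequential limit $(\Omega_\infty, 0)$ in the local Hausdorff topology. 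That such a limit exists, is a nondegenerate convex domain in $\Xb_d$ (contains no affine line), and depends in a controlled way on whether $\xi$ is of finite or infinite type is precisely the content of the cited Theorems~\ref{thm:rescale_finite_type} and~\ref{thm:rescale_inf_type}; I would invoke those. The essential geometric input is that because $\xi$ is not strongly pseudoconvex, the rescaled limit $\Omega_\infty$ is either unbounded with some nontrivial affine/translational symmetry in a complex direction, or has a boundary containing a nontrivial analytic structure — in any case $\partial\Omega_\infty$ is not strongly pseudoconvex at $0$, hence $\Omega_\infty$ is \emph{not} biholomorphic to $\Bc$ (the ball's boundary being everywhere strongly pseudoconvex, a property detectable after biholomorphism via the automorphism group / Wong--Rosay type considerations). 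This gives property (1) for every element of $\Lb$ and the existence half of property (2).

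\emph{Step 2: Defining $\Lb$ and proving compactness.} Let $\Lb$ be the closure, in $\Xb_d$ with the local Hausdorff topology, of the set of all domains $\Omega_\infty$ arising as in Step 1 over all such $\Omega$, all $\xi$, and all sequences $x_n$. The normalization built into the $A_n$'s — origin on the boundary, prescribed normal direction, unit "scale" in the type-adapted sense — confines the rescaled domains to a precompact family: convexity plus the normalization yields uniform interior and exterior geometric bounds (an inscribed ball of definite size at $0$ and a containment in a half-space-like region), so by the Blaschke-type selection principle for convex sets every sequence in this family has a locally Hausdorff convergent subsequence, and the limit is again convex, contains $0$, and contains no affine line because the type-adapted scaling is chosen exactly to prevent degeneration to a domain with an affine line. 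Passing to the closure and checking it stays inside $\Xb_d$ (the "no affine line" condition is closed under these controlled limits, again by the scaling normalization) gives that $\Lb$ is compact. This is Theorem~\ref{thm:L_is_cpct}. One must also check property (1) survives the closure: being non-biholomorphic-to-$\Bc$ is a closed condition within this family because the ball is isolated among normalized convex domains in the local Hausdorff topology by the characterization of the ball via its boundary regularity (equivalently, a limit of domains each carrying a non-strongly-pseudoconvex boundary point at a uniformly controlled location again has such a point).

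\emph{Main obstacle.} The delicate point is Step 2's compactness together with the persistence of property (1) under closure: one needs the type-adapted affine rescalings to be uniform enough across \emph{all} convex domains and \emph{all} weakly pseudoconvex boundary points simultaneously — not just for a fixed $\Omega$ — so that the resulting family of normalized domains is genuinely precompact and its closure avoids the ball. Handling the finite-type and infinite-type cases in a single compact $\Lb$ requires reconciling two a priori different scaling regimes; the resolution is that in both regimes the normalized limit retains a witness to non-strong-pseudoconvexity at the origin (a complex line segment in the boundary, or a vanishing Levi form direction), and this witness is a closed condition. I would structure the argument so that all quantitative estimates (inscribed ball radius, defining-function bounds) are stated with constants depending only on $d$, making the precompactness transparent, and then cite the three referenced theorems for the domain-specific rescaling constructions.
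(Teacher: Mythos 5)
There is a genuine gap, and it sits exactly where you flag your ``main obstacle'': both halves of property (1) --- that the limit models are not biholomorphic to $\Bc$, and that this survives taking closures --- are asserted rather than proved, and the arguments you sketch for them do not work. Your Step 1 claims that since $\partial\Omega_\infty$ is not strongly pseudoconvex at a boundary point, $\Omega_\infty$ cannot be biholomorphic to $\Bc$ ``via automorphism group / Wong--Rosay type considerations.'' But the rescaled limits are unbounded model domains, biholomorphisms between them and the ball need not extend to the boundary, and boundary pseudoconvexity type is not a biholomorphic invariant without such extension; Wong--Rosay does not apply here. The paper needs two nontrivial, case-specific inputs at this point: for the finite-type limits, which are the weighted-homogeneous polynomial models in $\Pb(m)$ with $m\neq(2,\dots,2)$, it invokes the theorem of Coupet--Pinchuk; for the infinite-type limits, whose boundary contains the disk $e_0+\Db e_1$, it uses that such a domain is not Gromov hyperbolic in its Kobayashi metric (Theorem 3.1 of \cite{Z2016}) while $(\Bc,K_\Bc)$ is. Neither of these is a soft consequence of the boundary failing to be strongly pseudoconvex.

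Your Step 2 has the same problem in amplified form. Defining $\Lb$ as the closure of all rescaled limits, you must show the closure still misses the ball, and your justification --- that ``the ball is isolated among normalized convex domains in the local Hausdorff topology'' and that a ``witness to non-strong-pseudoconvexity'' (a vanishing Levi direction or a boundary disk) ``is a closed condition'' --- is false in general and unproven in this family: a Levi-degenerate direction is a second-order condition that can a priori disappear under Hausdorff limits, and the limit need not even have smooth boundary. The actual content of Theorem~\ref{thm:L_is_cpct} is a concrete dichotomy for sequences $\Omega_n\in\Pb(m^{(n)})$ normalized to lie in $\Kb$: either the types $m^{(n)}$ stabilize, in which case the defining polynomials have uniformly bounded degree and (by the normalization $\Db e_i\subset\Omega_n$) uniformly bounded coefficients, so they converge to a polynomial satisfying the same weighted homogeneity and the limit lies in $\Pb(m)$ with $m\neq(2,\dots,2)$; or $m_d^{(n)}\to\infty$, in which case $P_n(0,\dots,0,z_d)\leq\abs{z_d}^{m_d^{(n)}}$ forces $e_0+\Db e_d\subset\partial\Omega_\infty$, so the limit lies in $\Fb$. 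It is precisely this preservation of the homogeneity (preventing, say, degeneration to the paraboloid $\{x<1-\norm{z}^2\}$, which \emph{is} biholomorphic to $\Bc$) together with the creation of a boundary disk in the degenerate case that makes ``not biholomorphic to $\Bc$'' stable under limits --- and then only after the Coupet--Pinchuk and Gromov-hyperbolicity inputs above are applied to the limit. Your proposal would be repaired by replacing the closure construction with the explicit set $\Lb=\Fb\cup\bigcup_{m\neq(2,\dots,2)}\Pb(m)$ and carrying out this dichotomy, rather than appealing to a closedness principle that is not available.
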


One then deduces Theorem~\ref{thm:cont} from the Proposition by contradiction: assume for a contradiction that for any $n > 0$ there exists $\Omega_n \subset \Cb^d$ a bounded convex domain with $C^\infty$ boundary so that 
\begin{enumerate}
\item outside some compact set of $\Omega$
\begin{align*}
\abs{f(\Omega_n, z) -f(\Bc,0)} \leq 1/n,
\end{align*}
\item $\Omega_n$ is not strongly pseudoconvex. 
\end{enumerate}
Then for each $n >0$, there exists points $x_m^{(n)} \in \Omega_n$ and affine maps $A^{(n)}_m \in \Aff(\Cb^d)$ so that $A_{n,m}(\Omega_n, x_{n,m})$ converges to $(\wh{\Omega}_n, \wh{x}_n)$ and $\wh{\Omega}_n \in \Lb$. A simple argument will show that 
\begin{align*}
\abs{f(\wh{\Omega}_n, z) -f(\Bc,0)} \leq 1/n
\end{align*}
for all $z \in \wh{\Omega}_n$. Now using the fact that $\Lb$ is compact we can pass to a subsequence and assume that $\wh{\Omega}_n$ converges to some $\wh{\Omega} \in \Lb$. Now 
\begin{align*}
f(\wh{\Omega}, z) =f(\Bc,0)
\end{align*}
for all $z \in \wh{\Omega}$. So $\wh{\Omega}$ is biholomorphic to $\Bc$, which is a contradiction since $\wh{\Omega} \in \Lb$.  

The proof of Theorem~\ref{thm:upper_cont} is nearly identical. 

\subsection{Notations} 

We end the introduction by fixing some very basic  notations. 
\begin{enumerate}
\item For $v,w \in\Cb^d$, $\ip{v,w}$ will denote the standard Hermitian inner product and $\norm{v}$ will denote the standard Euclidean distance.
\item For a domain $\Omega \subset \Cb^d$ we will let $b_\Omega$ denote the Bergman metric, $k_\Omega$ denote the Kobayashi infinitesimal metric, and $K_\Omega$ denote the Kobayashi distance. 
\item $\Aff(\Cb^d)$ will denote the group of affine automorphisms of $\Cb^d$. 
\end{enumerate}

\subsection*{Acknowledgements} 

I would like to thank the referee for a number of comments and corrections which improved the present work. This material is based upon work supported by the National Science Foundation under Grant Number NSF 1400919.

\section{The space of convex sets} \label{sec:space-convex}

\subsection{The local Hausdorff topology} Given a set $A \subset \Cb^d$, let $\Nc_{\epsilon}(A)$ denote the \emph{$\epsilon$-neighborhood of $A$} with respect to the Euclidean distance. The \emph{Hausdorff distance} between two compact sets $A,B \subset \Cb^d$ is given by
\begin{align*}
d_{H}(A,B) = \inf \left\{ \epsilon >0 : A \subset \Nc_{\epsilon}(B) \text{ and } B \subset \Nc_{\epsilon}(A) \right\}.
\end{align*}
Equivalently, 
\begin{align*}
d_H(A,B) = \max\left\{\sup_{a \in A}\inf_{b \in B} \norm{a-b}, \sup_{b \in B} \inf_{a \in A} \norm{a-b} \right\}.
\end{align*}
The Hausdorff distance is a complete metric on the space of compact sets in $\Cb^d$.

The space of all closed convex sets in $\Cb^d$ can be given a topology from the local Hausdorff semi-norms. For $R >0$ and a set $A \subset \Cb^d$ let $A^{(R)} := A \cap B_R(0)$. Then define the \emph{local Hausdorff semi-norms} by
\begin{align*}
d_H^{(R)}(A,B) := d_H(A^{(R)}, B^{(R)}).
\end{align*}
Since an open convex set is completely determined by its closure, we say a sequence of open convex sets $A_n$ converges in the local Hausdorff topology to an open convex set $A$ if there exists some $R_0 \geq 0$ so that 
\begin{align*}
\lim_{n \rightarrow \infty} d_H^{(R)}(\overline{A}_n,\overline{A}) = 0
\end{align*}
for all $R \geq R_0$. 

Finally we introduce a topology on $\Xb_d$ and $\Xb_{d,0}$ using the local Hausdorff topology: 
\begin{enumerate}
\item A sequence $\Omega_n$ converges to $\Omega_\infty$ in $\Xb_d$ if $\Omega_n \rightarrow \Omega_\infty$ in the local Hausdorff topology. 
\item A sequence $(\Omega_n, x_n)$ converges to $(\Omega_\infty, x_\infty)$ in $\Xb_{d,0}$ if $\Omega_n \rightarrow \Omega_\infty$ in the local Hausdorff topology and $x_n \rightarrow x_\infty$.
\end{enumerate} 

\subsection{Continuity of the Kobayashi distance and metric}

Unsurprisingly, the Kobayashi distance is continuous with respect to the local Hausdorff topology.

\begin{theorem}
\label{thm:dist_conv}
Suppose $\Omega_n$ converges to $\Omega$ in $\Xb_{d}$. Then 
\begin{align*}
K_{\Omega}(x,y) = \lim_{n \rightarrow \infty} K_{\Omega_n}(x,y)
\end{align*}
for all $x,y \in \Omega$ uniformly on compact sets of $\Omega \times \Omega$.
\end{theorem}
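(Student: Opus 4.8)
The plan is to exploit monotonicity of the Kobayashi distance under inclusions together with the local Hausdorff convergence, so that the sequence $K_{\Omega_n}$ can be squeezed between the Kobayashi distances of two nested convex domains that both converge to $\Omega$. First I would fix a compact set $L \subset \Omega$ and a radius $R$ with $\Omega^{(R)}$ already capturing $L$ and a little room around it. Since $\Omega_n \to \Omega$ in local Hausdorff, for every small $\delta > 0$ there is an $N$ so that for $n \geq N$ we have the double inclusion of truncations $\Omega^{(R)} \cap (\text{shrunk by }\delta) \subset \Omega_n^{(R)} \subset \Nc_\delta(\Omega)$ on the ball $B_R(0)$; more precisely, using convexity I would produce honest open convex sets $\Omega^-_\delta \subset \Omega_n \cap B_R(0)$ and $\Omega_n \cap B_R(0) \subset \Omega^+_\delta$, where $\Omega^-_\delta$ is obtained by scaling $\Omega$ slightly toward an interior point and $\Omega^+_\delta = \Nc_\delta(\Omega)$ is its $\delta$-neighborhood. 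Both $\Omega^\pm_\delta \to \Omega$ in local Hausdorff as $\delta \to 0$, and both are convex.

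Next I would invoke the distance-decreasing property of the Kobayashi metric under holomorphic inclusions: for $x,y$ in $L$ and $n$ large,
\begin{align*}
K_{\Omega^+_\delta}(x,y) \leq K_{\Omega_n}(x,y) \leq K_{\Omega^-_\delta}(x,y).
\end{align*}
Here one must be a little careful that $x,y$ lie in the truncated domains and that chains realizing the Kobayashi distance on $\Omega_n$ can be taken to stay in a fixed ball: because $\Omega_n$ is convex and $\Omega$ contains no affine line (so by Barth's theorem, Remark~\ref{rmk:barth}, the Kobayashi metric is complete and proper), near-geodesics between points of $L$ stay in a compact set independent of $n$, so the truncation at radius $R$ is harmless for $R$ large enough. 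This reduces the problem to showing $K_{\Omega^-_\delta}(x,y) \to K_\Omega(x,y)$ and $K_{\Omega^+_\delta}(x,y) \to K_\Omega(x,y)$ as $\delta \to 0$, uniformly for $x,y \in L$.

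For the inner approximation $\Omega^-_\delta$, since it is an expanding family of domains with $\bigcup_\delta \Omega^-_\delta = \Omega$, the standard fact that the Kobayashi distance is continuous under increasing unions of domains (apply analytic discs in $\Omega$, which land in $\Omega^-_\delta$ once $\delta$ is small) gives the convergence, and the convergence is uniform on $L \times L$ by a Dini-type argument since the functions are monotone and continuous with continuous limit on a compact set. For the outer approximation $\Omega^+_\delta = \Nc_\delta(\Omega) \supset \Omega$, I would use lower semicontinuity: $\liminf_{\delta \to 0} K_{\Omega^+_\delta} \geq K_\Omega$ follows because any limit of analytic discs into $\Omega^+_\delta$ is an analytic disc into $\overline{\Omega}$, which by the convexity of $\Omega$ and the maximum principle either lies in $\Omega$ or is constant; combined with the trivial bound $K_{\Omega^+_\delta} \leq K_\Omega$ this pins down the limit. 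Uniformity again comes from monotonicity plus compactness of $L$.

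The main obstacle I expect is the bookkeeping around truncation at radius $R$: one needs to know that Kobayashi near-geodesics (or the analytic discs in the defining chains) between points of a fixed compact set $L$ stay within a fixed ball $B_R(0)$ uniformly in $n$, so that passing to $\Omega_n^{(R)}$ does not change the relevant distances. I would handle this by first establishing a uniform comparison $K_{\Omega_n}(x,y) \leq K_{\Omega^-_\delta}(x,y) \leq C$ for $x,y \in L$ (a fixed bound, since $\Omega^-_\delta$ is a fixed domain once $\delta$ is fixed), then using the completeness of $K_{\Omega_n}$ together with a uniform lower bound on $K_{\Omega_n}$ near $\partial B_R(0)$—which itself follows from the inclusion $\Omega_n \cap B_R(0) \subset \Omega^+_\delta$ and properness of $K_{\Omega^+_\delta}$—to confine the discs. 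Once this localization is in place, the squeeze argument above closes the proof.
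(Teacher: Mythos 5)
Your upper bound is fine: truncated, slightly shrunk copies of $\Omega$ are relatively compact in $\Omega$, hence eventually contained in $\Omega_n$, and $K_D \downarrow K_\Omega$ as such domains $D$ exhaust $\Omega$, with uniformity on compacts by Dini. The genuine gap is in the lower bound. Local Hausdorff convergence only gives $\Omega_n \cap B_R(0) \subset \Nc_\delta(\Omega)$, never $\Omega_n \subset \Nc_\delta(\Omega)$, and the Kobayashi distance is monotone only under inclusion of the \emph{whole} domain. Your patch --- confining near-geodesics to $B_R(0)$ --- does not repair this, for two reasons. First, what must be confined is not the chain of points (the ``spine'') but the \emph{images of the analytic discs} forming the chain (or the single Lempert disc): a chain of small total length controls the Kobayashi distance from $x$ to points along the spine, but says nothing about where the disc images go, and only when the full images lie in $\Nc_\delta(\Omega)$ can the chain be reused to bound $K_{\Nc_\delta(\Omega)}(x,y)$. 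Second, your justification of confinement is circular: the uniform lower bound on $K_{\Omega_n}$ near $\partial B_R(0)$ is itself obtained by comparing $K_{\Omega_n}$ with $K_{\Nc_\delta(\Omega)}$ through the truncated inclusion, i.e.\ by exactly the kind of inequality you are trying to establish; passing to the infinitesimal metric does not help, since $k_{\Omega_n}(z;\cdot)\ge k_{\Nc_\delta(\Omega)}(z;\cdot)$ again needs the global inclusion. Two smaller unverified points: properness of $K_{\Nc_\delta(\Omega)}$ requires that $\Nc_\delta(\Omega)$ contain no complex affine line (true, via recession cones, but it needs an argument), and your claim that $K_{\Nc_\delta(\Omega)}\to K_\Omega$ as $\delta\to 0$ is itself an instance of the theorem and needs the same missing ingredient (normality of near-extremal discs in unbounded domains).

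What the argument needs, and what your proposal lacks, is a mechanism converting local information about $\Omega_n$ into \emph{global} constraints usable for lower bounds. For convex domains this is supplied by supporting hyperplanes at boundary points of $\Omega_n$ inside a fixed ball: they give global half-space inclusions for $\Omega_n$, and choosing $d$ such boundary points as in Lemma~\ref{lem:K_defn}, so that the normals form a uniformly controlled basis (this is where the hypothesis that $\Omega$ contains no complex affine line enters), one gets that near-extremal discs $f_n:\Db\to\Omega_n$ with $f_n(0)=x$, $f_n(\sigma_n)=y$ form a normal family (single discs suffice by Lempert's theorem on convex domains, or argue disc by disc along a chain). A limit disc maps $\Db$ into $\overline{\Omega}$, and the maximum-principle/convexity argument you sketched for the outer approximation shows it maps into $\Omega$, giving $K_\Omega(x,y)\le\liminf_n K_{\Omega_n}(x,y)$ directly; uniformity follows by running the argument with sequences $x_n\to x$, $y_n\to y$. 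Note the paper itself gives no proof of this theorem beyond citing Theorem 4.1 of Z2016, where the argument is along these normal-family lines; without some such step, the squeeze as you set it up cannot be closed.
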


See Theorem 4.1 in ~\cite{Z2016} for a detailed argument. The proof of Theorem 4.1 in~\cite{Z2016} also implies that the Kobayashi metric is continuous:

\begin{theorem}\label{thm:metric_conv}
Suppose $\Omega_n$ converges to $\Omega$ in $\Xb_{d}$. Then 
\begin{align*}
k_{\Omega}(x;v) = \lim_{n \rightarrow \infty} k_{\Omega_n}(x;v)
\end{align*}
for all $x \in \Omega$ and $v \in \Cb^d$ uniformly on compact sets of $\Omega \times \Cb^d$.
\end{theorem}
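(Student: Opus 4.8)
The statement to prove is Theorem~\ref{thm:metric_conv}, asserting that $k_{\Omega_n}(x;v) \to k_\Omega(x;v)$ uniformly on compact subsets of $\Omega \times \Cb^d$ when $\Omega_n \to \Omega$ in the local Hausdorff topology.

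\medskip

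\textbf{Plan of proof.} The strategy is to prove the two one-sided inequalities separately, using monotonicity of the Kobayashi metric under inclusions together with the fact that local Hausdorff convergence of convex sets gives, for any compact $K \subset \Omega$ and any $\delta > 0$, the inclusions $(1-\delta)(\Omega - x_0) + x_0 \subset \Omega_n$ and $\Omega_n \cap B_R(0) \subset (1+\delta)(\Omega - x_0) + x_0$ for $n$ large (after recentering at an interior point $x_0$), possibly after passing to the tail of the sequence so that all $\Omega_n$ contain $K$. For the upper bound $\limsup_n k_{\Omega_n}(x;v) \le k_\Omega(x;v)$: given $\epsilon > 0$, choose a holomorphic disc $\phi : \Db \to \Omega$ with $\phi(0) = x$, $\phi'(0) = \lambda v$, $1/\abs{\lambda} \le k_\Omega(x;v) + \epsilon$; the scaled disc $\psi_\delta(\zeta) = (1-\delta)(\phi(\zeta) - x_0) + x_0$ maps into $\Omega_n$ for $n$ large, giving $k_{\Omega_n}(\psi_\delta(0); \psi_\delta'(0)) \le 1/((1-\delta)\abs{\lambda})$; letting $\delta \to 0$ (and using continuity in the base point, which follows from the same inclusions) yields the bound. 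For the lower bound, use that $k_{\Omega_n} \ge k_{(1+\delta)(\Omega - x_0) + x_0}$ on the relevant region once we know the base points stay in a fixed compact set, and that $k$ for a dilate of $\Omega$ is just a rescaling of $k_\Omega$.

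\medskip

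\textbf{Uniformity.} To upgrade pointwise convergence to uniform convergence on compact subsets of $\Omega \times \Cb^d$, I would invoke equicontinuity: the Kobayashi metrics $k_{\Omega_n}$ are uniformly comparable to $k_\Omega$ on a fixed compact set (by the sandwiching inclusions above, the ratio is between $1-\delta$ and $1+\delta$ for large $n$), and $k_\Omega$ is continuous on $\Omega \times \Cb^d$ (indeed locally Lipschitz, since $\Omega$ is convex hence taut and the metric is continuous by Royden-type arguments). A sequence of functions that converges pointwise and is uniformly sandwiched between $(1-\delta)$ and $(1+\delta)$ multiples of a fixed continuous function converges uniformly on compacta. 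Alternatively, one can cite that the proof of Theorem 4.1 in~\cite{Z2016} is carried out at the level of the infinitesimal metric and automatically gives this; indeed the excerpt explicitly says ``The proof of Theorem 4.1 in~\cite{Z2016} also implies that the Kobayashi metric is continuous,'' so the cleanest write-up is to point to that argument and note the only modification needed is tracking discs rather than chains of discs.

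\medskip

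\textbf{Main obstacle.} The principal subtlety is the behavior of base points: the inclusion estimates from local Hausdorff convergence are only clean after recentering, and one must ensure that as $x$ ranges over a compact $K \subset \Omega$, a single recentering point $x_0$ and a single scaling factor work uniformly — this is where convexity is essential, as it lets one use a single ``radial'' contraction toward $x_0$ that moves all of $K$ uniformly into the interior. A secondary point is that $\Cb^d$ is not compact in the $v$-variable, but $k_\Omega(x;\cdot)$ is homogeneous of degree one, so it suffices to work with $v$ in the unit sphere, which is compact; this reduces the problem to uniformity over $K \times S^{2d-1}$. Given these reductions, the remaining estimates are the routine monotonicity-plus-rescaling arguments sketched above.
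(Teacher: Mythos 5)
There is a genuine gap, and it sits exactly where the theorem is actually needed in this paper: the case where the limit $\Omega$ is \emph{unbounded}. Your argument rests on the global sandwich inclusions $(1-\delta)(\Omega-x_0)+x_0\subset\Omega_n\subset(1+\delta)(\Omega-x_0)+x_0$ for $n$ large, but local Hausdorff convergence only controls $\overline{\Omega}_n\cap B_R(0)$ for each fixed $R$, so neither inclusion follows when $\Omega$ is unbounded. For instance, with $\Omega=\{(z_1,z_2):\Real z_1>\abs{z_2}^2\}$ and $\Omega_n$ the convex hull of $\Omega$ and the point $(n,2\sqrt{n})$, one has $\Omega_n\rightarrow\Omega$ in $\Xb_2$, yet $\Omega_n$ is contained in no fixed dilate $(1+\delta)(\Omega-x_0)+x_0$. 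This is fatal for your lower bound: to bound $k_{\Omega_n}(x;v)$ from below by monotonicity you need a domain containing \emph{all} of $\Omega_n$ (a holomorphic disc through $x$ may wander far outside any ball $B_R$), and intersecting with $B_R$ moves the inequality in the wrong direction. The same defect undermines the ``uniform sandwiching'' used for equicontinuity. The upper bound is only superficially affected (the image of your competitor disc need not lie in a bounded set, but replacing $\phi(\zeta)$ by $\phi(r\zeta)$ with $r<1$ gives a compactly contained image, which does lie in $\Omega_n$ for large $n$; no contraction toward $x_0$ or base-point continuity is needed). So the easy direction is fine after a routine fix, but the lower semicontinuity of $k_{\Omega_n}$ is not obtained by your method.

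This is precisely why the paper does not argue by dilation at all: it simply invokes the proof of Theorem 4.1 in \cite{Z2016}, where the lower bound is obtained by convexity-specific localization, namely supporting complex hyperplanes of $\Omega_n$ (which converge to supporting hyperplanes of $\Omega$) and the distance-decreasing property under the induced linear projections onto half-planes; this controls $k_{\Omega_n}$ from below using only data near a fixed compact set, which the local Hausdorff topology does govern. If you want a self-contained write-up, you should replace the dilate comparison in the lower bound by such a supporting-hyperplane/half-plane projection argument (or by a Carath{\'e}odory--Lempert comparison), and keep your truncated-disc argument for the upper bound; your reduction of uniformity to $K\times S^{2d-1}$ by homogeneity is fine once two-sided local estimates of this kind are in place.
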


\subsection{The affine group acts co-compactly}\label{subsec:acts_cocompact}

Let $\Aff(\Cb^d)$ be the group of complex affine isomorphisms of $\Cb^d$. Then $\Aff(\Cb^d)$ acts on $\Xb_d$ and $\Xb_{d,0}$. Remarkably, the action of $\Aff(\Cb^d)$ on $\Xb_{d,0}$ is co-compact:

\begin{theorem}[Frankel \cite{F1989b}]\label{thm:frankel_compactness}
The group $\Aff(\Cb^d)$ acts co-compactly on $\Xb_{d,0}$, that is there exists a compact set $K \subset \Xb_{d,0}$ so that $\Aff(\Cb^d) \cdot K = \Xb_{d,0}$. 
\end{theorem}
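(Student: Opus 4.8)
The plan is to produce an explicit compact set $\Kc \subseteq \Xb_{d,0}$ and verify directly that $\Aff(\Cb^d)\cdot\Kc = \Xb_{d,0}$. For $(\Omega,x) \in \Xb_{d,0}$ write $I_\Omega(x) := \{v \in \Cb^d : k_\Omega(x;v) < 1\}$ for the Kobayashi indicatrix. Since $\Omega$ contains no complex affine line, Barth's theorem (Remark~\ref{rmk:barth}) makes $k_\Omega(x;\cdot)$ a nondegenerate norm, so $I_\Omega(x)$ is a bounded open neighborhood of $0$ invariant under $v \mapsto e^{i\theta}v$, and it is convex by Lempert's theorem (which gives $k_\Omega = c_\Omega$ on convex domains, an exhaustion handling the unbounded case). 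I then set
\[
\Kc := \left\{ (\Omega, 0) \in \Xb_{d,0} : \tfrac{1}{\sqrt{2d}}\norm{v} \leq k_\Omega(0;v) \leq \norm{v} \text{ for all } v \in \Cb^d \right\},
\]
equivalently the set of $(\Omega, 0)$ with $B_1(0) \subseteq I_\Omega(0) \subseteq B_{\sqrt{2d}}(0)$.

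First I would check the covering property. Given $(\Omega,x) \in \Xb_{d,0}$, apply John's theorem to the centrally symmetric convex body $I_\Omega(x)$: its maximal-volume inscribed ellipsoid $E$ satisfies $E \subseteq I_\Omega(x) \subseteq \sqrt{2d}\,E$ (here $\dim_{\Rb}\Cb^d = 2d$), and by uniqueness of $E$ together with the $e^{i\theta}$-invariance of $I_\Omega(x)$ the ellipsoid $E$ is the unit ball of a Hermitian norm, so there is a complex-linear $L \in \GL(\Cb^d)$ with $L(E) = B_1(0)$. Since $L$ is affine and the Kobayashi metric is a biholomorphic invariant, $k_{L\Omega}(Lx;Lv) = k_\Omega(x;v)$, hence $I_{L\Omega}(Lx) = L(I_\Omega(x))$ and $B_1(0) \subseteq I_{L\Omega}(Lx) \subseteq B_{\sqrt{2d}}(0)$. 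Composing $L$ with the translation taking $Lx$ to the origin yields $A \in \Aff(\Cb^d)$ with $A(\Omega,x) \in \Kc$; the reverse inclusion $\Aff(\Cb^d)\cdot\Kc \subseteq \Xb_{d,0}$ is immediate since $\Aff(\Cb^d)$ preserves $\Xb_{d,0}$.

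The substantive step — which I expect to be the main obstacle — is proving $\Kc$ is compact; since the topology on $\Xb_{d,0}$ is metrizable it suffices to treat sequences. I would first record two facts. (i) Every $(\Omega,0)\in\Kc$ contains the fixed ball $B_{1/2}(0)$: if $p \in \partial\Omega$ realizes $\rho := \operatorname{dist}(0,\partial\Omega)$, convexity gives a real supporting hyperplane at $p$, so $\Omega \subseteq H := \{z : \Real\ip{z,u} < a\}$ with $\norm{u}=1$ and $0 < a \leq \rho$; the complex-linear projection $z \mapsto \ip{z,u}$ maps $H$ into the half-plane $P := \{\zeta \in \Cb : \Real\zeta < a\}$ and carries $u$ to $1$, so by the distance-decreasing property $\tfrac{1}{2a} = k_P(0;1) \leq k_H(0;u) \leq k_\Omega(0;u) \leq \norm{u} = 1$, giving $\rho \geq a \geq \tfrac12$. (ii) Given a sequence $(\Omega_n, 0)$ in $\Kc$, the Blaschke selection theorem applied on each ball $B_R(0)$ together with a diagonal argument yields a subsequence with $\Omega_n \to \Omega_\infty$ in the local Hausdorff topology, where $\Omega_\infty$ is an open convex set containing $B_{1/2}(0)$ (using (i) and that an open convex set is determined by its closure).

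It then remains to show $(\Omega_\infty, 0) \in \Kc$, and the crux is that $\Omega_\infty$ contains no complex affine line. Suppose it did contain one, say $\ell$ with direction $w$, $\norm{w}=1$; then $\Omega_\infty \supseteq \operatorname{conv}(\ell \cup B_{1/2}(0))$, so for each fixed large $R$ and all sufficiently large $n$ the domain $\Omega_n$ contains the compact set $\operatorname{conv}\!\big((\ell \cap \overline{B_R(0)}) \cup \overline{B_{1/3}(0)}\big)$. Working in the complex $2$-plane $\Pi$ affinely spanned by $0$ and $\ell$, and exploiting that a point $(\beta,\mu)\in\ell$ may have $\mu\in\Cb$ arbitrary, one builds from this convex hull an explicit holomorphic disk $\varphi:\Db\to\Omega_n$ with $\varphi(0)=0$ and $\norm{\varphi'(0)} \gtrsim R$ (a fixed barycentric weight on $B_{1/3}(0)$, pushing the point of $\ell$ out to distance of order $R$ along $w$), so that $k_{\Omega_n}(0;w) \lesssim 1/R$; letting $R\to\infty$ with $n=n(R)\to\infty$ contradicts $k_{\Omega_n}(0;w) \geq \tfrac{1}{\sqrt{2d}}$. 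In particular $\Omega_\infty \neq \Cb^d$ as well, so $\Omega_\infty \in \Xb_d$ and $(\Omega_\infty,0)\in\Xb_{d,0}$. Now Theorem~\ref{thm:metric_conv} gives $k_{\Omega_n}(0;v) \to k_{\Omega_\infty}(0;v)$ for every $v$, so the inequalities defining $\Kc$ pass to the limit and $(\Omega_\infty,0)\in\Kc$. This proves $\Kc$ compact, completing the proof.
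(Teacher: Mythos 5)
Your proposal is correct, but it takes a genuinely different route from the paper. The paper normalizes a pair $(\Omega,x)$ by iteratively choosing nearest boundary points in orthogonal complex subspaces and mapping them to the standard basis vectors by a translation, a unitary map and a diagonal scaling; its compact set $\Kb$ is cut out by the elementary conditions $\Db e_i\subset\Omega$ and $Z_i\cap\Omega=\emptyset$, and compactness reduces to Lemma~\ref{lem:K_defn} (the normals of supporting hyperplanes containing the $Z_i$ form a basis, so no limit domain contains a line) -- no Kobayashi-metric input is needed. You instead normalize the Kobayashi indicatrix $I_\Omega(x)$ by its John ellipsoid, which requires convexity of $k_\Omega(x;\cdot)$ (Lempert, extended to unbounded convex domains by exhaustion) plus uniqueness of the John ellipsoid and the circle-invariance trick to make it Hermitian; your compactness argument then needs the inradius bound via the supporting half-plane projection, Blaschke selection, the explicit holomorphic disk $\zeta\mapsto \tfrac{tR}{2}\zeta w$ (a fixed convex combination of a far point of the putative line with a point of $\overline{B_{1/3}(0)}$ chosen to recenter at $0$), which indeed forces $k_{\Omega_n}(0;w)\lesssim 1/R$ and contradicts the lower bound $1/\sqrt{2d}$, and finally Theorem~\ref{thm:metric_conv} to pass the defining inequalities to the limit. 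Both are complete; what each buys: the paper's $\Kb$ is entirely elementary and is exactly the set reused later (the rescaled limit domains of Section~3 are shown to lie in $\Kb$), whereas your indicatrix normalization is affinely and biholomorphically natural, closer in spirit to Frankel's original argument, at the cost of heavier machinery (Lempert, John's theorem, continuity of $k$ under local Hausdorff convergence) and a dimension-dependent constant $\sqrt{2d}$. One small point to tighten if you write it up: justify that a compact subset of the limit domain is eventually contained in the $\Omega_n$ (you use this for the convex hull of $\ell\cap\overline{B_R(0)}$ and $\overline{B_{1/3}(0)}$); this is standard for convex domains all containing a fixed ball, but it deserves a line.
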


This subsection is devoted to producing a particular compact set whose $\Aff(\Cb^d)$-translates cover $\Xb_{d,0}$.

Let $e_1, \dots, e_d$ be the standard basis of $\Cb^d$ and for $1 \leq i \leq d$ define the complex $(d-i)$-plane $Z_i$ by 
\begin{align*}
Z_i = \left\{ e_i + \sum_{j=1}^{d-i} z_j e_{i+j} : z_1, \dots, z_{d-i} \in \Cb\right\}.
\end{align*}

\begin{definition}
Let $\Kb \subset \Xb_{d,0}$ be the set of pairs $(\Omega, 0)$ where $\Omega$ is a convex domain so that:
\begin{enumerate} 
\item $\Db e_i \subset \Omega$ for each $1 \leq i \leq d$, 
\item $Z_i \cap \Omega=\emptyset$ for each $1 \leq i \leq d$. 
\end{enumerate}
\end{definition} 

\begin{theorem}\label{thm:weak_fund_domain} With the notation above:
\begin{enumerate}
\item  $\Kb$ is a compact subset of $\Xb_{d,0}$.
\item For any $(\Omega, x) \in \Xb_{d,0}$ there exists some $A \in \Aff(\Cb^d)$ so that $A(\Omega, x) \in \Kb$. 
\end{enumerate}
\end{theorem}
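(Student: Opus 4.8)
\textbf{Proof proposal for Theorem~\ref{thm:weak_fund_domain}.}

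The plan is to prove the two assertions essentially independently, with (2) supplying the explicit normalization that makes $\Kb$ small enough to be compact in (1). For (2), I would start with an arbitrary $(\Omega,x)\in\Xb_{d,0}$ and produce the required $A\in\Aff(\Cb^d)$ by an inductive, coordinate-by-coordinate construction. First translate so that $x=0$. Since $\Omega$ contains no complex affine line, the Kobayashi metric at $0$ is a genuine norm on $\Cb^d$ (Remark~\ref{rmk:barth}); after a linear change of coordinates I may assume $k_\Omega(0;\cdot)$ is comparable to the Euclidean norm, or more concretely, I would choose directions greedily. Pick $e_1$ to be (after rescaling) a direction in which the largest disc $\Db e_1$ embeds in $\Omega$ through $0$ along that line; convexity guarantees $\Db e_1\subset\Omega$ and that the real hyperplane supporting $\Omega$ cuts off the point $e_1$ — more precisely one arranges $e_1+\sum_{j\ge 1}z_je_{1+j}\notin\Omega$, i.e. $Z_1\cap\Omega=\emptyset$, by choosing the first coordinate axis transverse to a supporting hyperplane at the boundary point of $\Omega$ hit by the ray $\Rb_{>0}e_1$. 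Then restrict attention to the slice $\Omega\cap(\{0\}\times\Cb^{d-1})$ (again a convex domain containing no affine line, by Barth) and repeat in one lower dimension to choose $e_2$, and so on. The bookkeeping is to check that the affine maps used at each stage can be composed into a single $A\in\Aff(\Cb^d)$ respecting the flag $\Cb^d\supset\{0\}\times\Cb^{d-1}\supset\cdots$, so that conditions (1) and (2) in the definition of $\Kb$ hold simultaneously. This is where Frankel's Theorem~\ref{thm:frankel_compactness} is morally being reproven with an explicit fundamental-domain-like set, and I expect the main obstacle to be precisely this compatibility: making the $i$-th normalization not destroy the conditions already achieved for $e_1,\dots,e_{i-1}$.

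For (1), I would show $\Kb$ is closed and that it is contained in a compact subset of $\Xb_{d,0}$. Closedness: if $(\Omega_n,0)\in\Kb$ and $\Omega_n\to\Omega_\infty$ in the local Hausdorff topology, then $\Db e_i\subset\Omega_\infty$ is immediate since $\Db e_i\subset\Omega_n$ for all $n$ and the local Hausdorff limit of convex sets contains the limit of any fixed compact subset; and $Z_i\cap\Omega_\infty=\emptyset$ follows because each $\Omega_n$ lies on one side of a supporting real hyperplane $H_n$ separating it from $Z_i$, these hyperplanes lie in a compact family (their normals are unit vectors and they pass within bounded distance of the origin, since $0\in\Omega_n$ and $e_i\in\partial$-side), so after passing to a subsequence $H_n\to H_\infty$ and $\Omega_\infty$ still lies weakly on one side of $H_\infty$ with $Z_i$ on the other; one then upgrades ``weakly on one side'' to the open condition $Z_i\cap\Omega_\infty=\emptyset$ using that $\Omega_\infty$ is open and $e_i\in\overline{\Db e_i}\subset\overline{\Omega_\infty}$ forces $Z_i$ to be exactly the complex tangent to a supporting hyperplane. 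I also need $\Omega_\infty\in\Xb_d$, i.e. that it contains no complex affine line: this follows from condition (2), since a convex domain containing $\Db e_1$ but disjoint from $Z_1$ (a hyperplane through $e_1$) cannot contain a line in the $e_1$-direction, and the transversality of the $Z_i$ handles the other directions — alternatively, the Kobayashi metric estimates of Theorem~\ref{thm:metric_conv} keep $k_{\Omega_\infty}(0;\cdot)$ nondegenerate.

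For the compactness (total boundedness / sequential compactness) part of (1): I must show that any sequence $(\Omega_n,0)\in\Kb$ has a subsequence converging in $\Xb_{d,0}$, which by the previous paragraph then lands back in $\Kb$. The issue is a uniform two-sided bound on the $\Omega_n$. A lower bound is built in: $\Db e_i\subset\Omega_n$ for all $i$, and by convexity the convex hull of $\bigcup_i\Db e_i$ is a fixed open neighborhood of $0$ contained in every $\Omega_n$, so the $\Omega_n$ cannot degenerate to something lower-dimensional or collapse. An upper bound: since $Z_i\cap\Omega_n=\emptyset$, each $\Omega_n$ is contained in the half-space cut out by the supporting hyperplane through $e_i$ with complex tangent $Z_i$; intersecting over $i=1,\dots,d$ gives a fixed convex region — in fact a set comparable to a simplex-like neighborhood — that contains every $\Omega_n$ and, crucially, is \emph{bounded in the local Hausdorff sense relative to $0$} along enough directions that the standard Blaschke-type selection theorem for convex sets (sequential compactness of closed convex sets in the local Hausdorff topology) applies: pass to a subsequence so that $\overline{\Omega}_n\to C$ for some closed convex $C$, note $0\in\inte C$ because the fixed lower-bound neighborhood survives, set $\Omega_\infty=\inte C$, and check $\Omega_\infty$ contains no affine line so $(\Omega_\infty,0)\in\Xb_{d,0}$. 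Combining, $\Kb$ is sequentially compact, hence compact. The main obstacle I anticipate is making the upper bound genuinely uniform: a priori the supporting hyperplanes for $Z_i$ only say $\Omega_n$ avoids $Z_i$, not that $\Omega_n$ is bounded, so one has to argue that the \emph{directions} of these $d$ hyperplanes, together with the fixed inner neighborhood, pin down a bounded circumscribed body — essentially that $d$ generic affine hyperplane constraints plus containing a neighborhood of $0$ forces boundedness, which uses the transversality engineered by the $Z_i$ in the definition.
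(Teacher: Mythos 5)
Your part (2) is essentially the paper's construction and is fine as a sketch: the paper picks nearest boundary points in successively orthogonal slices and conjugates by a unitary plus a diagonal scaling, while you pick a direction greedily, pass to a supporting complex hyperplane at the boundary point of the chosen complex line, and recurse into that slice; since arbitrary affine maps are allowed, the shears this requires cause no trouble (and the ``largest disc'' maximization is unnecessary -- any direction works once you take the nearest boundary point \emph{on that complex line} and a supporting hyperplane there, which is automatically transverse to the line because it misses the interior point $0$). The genuine gap is in part (1), in the step where you must show the local Hausdorff limit $(\Omega_\infty,0)$ actually lies in $\Xb_{d,0}$, i.e.\ that $\Omega_\infty$ contains no complex affine line. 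Your argument -- ``disjointness from $Z_1$ kills lines in the $e_1$-direction and transversality of the $Z_i$ handles the rest'' -- breaks down for $i\geq 2$: $Z_1=\{z_1=1\}$ is a complex hyperplane, so it does block every line whose direction has nonzero first coordinate, but $Z_i$ for $i\geq 2$ has complex codimension $i$, and a line with direction in $\{0\}\times\Cb^{d-1}$ meets $\{z_i=1\}$ without any reason to meet $Z_i$ itself. This is exactly what the paper's Lemma~\ref{lem:K_defn} supplies: by convex separation each $Z_i$ sits inside a real hyperplane $H_i=\{z:\Real\ip{v_i,z}=1\}$ disjoint from the domain, the conditions $Z_i\subset H_i$ and $0\notin H_i$ force $v_{i,i}=1$ and $v_{i,j}=0$ for $j>i$, so $v_1,\dots,v_d$ is a basis and every complex line meets some $H_i$. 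Your proposed fallback via Theorem~\ref{thm:metric_conv} is circular, since that theorem presupposes the limit domain is already in $\Xb_d$.

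A second, smaller problem is that the ``main obstacle'' you identify for sequential compactness -- extracting a uniform \emph{bounded} circumscribed body from the $d$ hyperplane constraints -- is both unattainable and unnecessary. Members of $\Kb$ can be unbounded (already for $d=1$: $\{z:\Real z<1\}$ contains $\Db$, misses $Z_1=\{1\}$, and contains no complex line), and the intersection of $d$ half-spaces in $\Cb^d\cong\Rb^{2d}$ is never bounded; moreover the supporting hyperplanes through $e_i$ containing $Z_i$ depend on the domain, so they give no fixed circumscribed set. Fortunately none of this is needed: Blaschke-type selection in the local Hausdorff topology applies to any sequence of closed convex sets containing the fixed neighborhood given by the convex hull of $\Db e_1\cup\dots\cup\Db e_d$, which is how the paper proceeds -- subconverge the closures, take the interior (nonempty because of that fixed inner neighborhood), observe that conditions (1) and (2) in the definition of $\Kb$ pass to the limit, and then invoke Lemma~\ref{lem:K_defn} for the no-line condition. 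In short, the real content of part (1) is precisely the lemma your sketch skips, not the boundedness you anticipated.
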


We begin the proof of Theorem~\ref{thm:weak_fund_domain} by proving the following:

\begin{lemma}\label{lem:K_defn} Suppose $\Omega \subset \Cb^d$ is a convex domain so that:
\begin{enumerate} 
\item $\Db e_i \subset \Omega$ for each $1 \leq i \leq d$, 
\item $Z_i \cap \Omega=\emptyset$ for each $1 \leq i \leq d$. 
\end{enumerate}
Then $\Omega$ contains no complex affine lines (and hence is in $\Kb$). 
\end{lemma}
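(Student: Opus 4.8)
The plan is to argue by contradiction: suppose $\Omega$ satisfies (1) and (2) but contains a complex affine line $L$. Since $\Omega$ is convex and open, after a translation we may assume $0 \in L$, so $L = \Cb v$ for some unit vector $v$, and $\Cb v + w \subset \Omega$ for every $w$ in a small ball around $0$; by convexity $\Cb v$ itself may be assumed to pass through any point of $\Omega$. The key elementary fact I would use is this: if a convex set contains an affine line in direction $v$ and also contains a point $p$, then it contains the whole line $p + \Cb v$ (a standard convexity argument — translate the line, take convex combinations, and let the combination parameter go to the endpoints). So $\Omega$ contains $p + \Cb v$ for every $p \in \Omega$; in other words $\Omega$ is invariant under translation by $\Cb v$.

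Next I would exploit property (1): since $\Db e_i \subset \Omega$ for each $i$, the point $0$ is in the closure but more usefully the origin is a limit of points of $\Omega$, and in fact a neighborhood of $0$ in each disk $\Db e_i$ lies in $\Omega$. Using the translation-invariance in direction $v$, I get that $\Cb v$ itself is contained in $\overline{\Omega}$, and then — because $\Omega$ is open and contains points arbitrarily near $0$ — the full complex line $\Cb v$ is contained in $\Omega$ (translate $\Cb v$ slightly using the fact that $\Omega$ is open near a point of $\Db e_1$, say, then translate back; more carefully: $\Omega$ contains $\epsilon e_1$ for small $\epsilon$, hence contains $\epsilon e_1 + \Cb v$, and letting $\epsilon \to 0$ with openness gives $\Cb v \subset \Omega$).

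Now comes the contradiction with property (2). I want to show that $\Cb v \subset \Omega$, together with $\Db e_i \subset \Omega$ for all $i$, forces $\Omega$ to meet one of the planes $Z_i$. Write $v$ in coordinates; let $i$ be the largest index with $v_i \neq 0$ after we consider $v$ modulo the span of $e_{i+1},\dots,e_d$ — i.e. let $i = \min\{ j : v \in \Span(e_j, e_{j+1}, \dots, e_d)\}$, so $v = v_i e_i + (\text{higher terms})$ with $v_i \neq 0$. Then the complex line $\frac{1}{v_i} \Cb v$ (which is still in $\Omega$ after rescaling, since $\Cb v$ is $\Cb$-invariant) has the form $\{ \lambda e_i + \lambda(\text{higher terms}) : \lambda \in \Cb\}$... this isn't quite $Z_i$, which requires the $e_i$-coefficient to be exactly $1$ while the higher coefficients range freely. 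So instead I would combine the line with the disks: take the point $e_i \in \overline{\Db e_i} \subset \overline\Omega$; actually use a point $(1-\delta)e_i \in \Omega$ and add the line $\Cb v$ through it, rescaled appropriately, to land a point of $\Omega$ on $Z_i$. Concretely, $(1-\delta)e_i + \frac{\mu}{v_i} v \in \Omega$ for all $\mu \in \Cb$; choosing $\mu = \delta$ (wait — we need the $e_i$-coordinate to equal $1$: $(1-\delta) + \mu = 1$ gives $\mu = \delta$) yields a point whose $e_i$-coordinate is $1$ and whose $e_{i+1},\dots,e_d$-coordinates are $\frac{\delta}{v_i}(\text{higher terms of } v)$, which lies on $Z_i$. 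Hence $Z_i \cap \Omega \neq \emptyset$, contradicting (2).

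The main obstacle is bookkeeping: making sure the line through a point $(1-\delta)e_i$ stays inside $\Omega$ (which follows from translation-invariance by $\Cb v$ once I've established it) and that the index $i$ is chosen so that the resulting point genuinely lies on $Z_i$ rather than some unintended plane — this requires being careful that $v \notin \Span(e_{i+1},\dots,e_d)$ so that $v_i \neq 0$ and the rescaling $\frac{1}{v_i}$ is legitimate. I would also need to handle the degenerate-looking case where the line $L$ is not through the origin at the start, but convexity plus openness reduces everything to the translation-invariance statement, which is the real engine of the argument. Everything else is elementary convex geometry in $\Cb^d \cong \Rb^{2d}$.
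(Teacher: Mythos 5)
Your argument is correct, but it takes a genuinely different route from the paper. You argue by contradiction: if $\Omega$ contained a complex line with direction $v$, then (since $\Omega$ is open and convex) $\Omega$ is invariant under translation by $\Cb v$, and taking $i = \min\{ j : v \in \Span(e_j,\dots,e_d)\}$ (so $v_i \neq 0$ and $v_j = 0$ for $j<i$), the point $(1-\delta)e_i + \tfrac{\delta}{v_i} v$ lies in $\Omega$ and has $i$-th coordinate $1$, vanishing earlier coordinates, and arbitrary later ones, hence lies on $Z_i$, contradicting hypothesis (2). The paper instead argues directly (no contradiction): by the supporting hyperplane theorem each $Z_i$ lies in a real hyperplane $H_i = \{\Real\ip{v_i,z}=1\}$ disjoint from $\Omega$, the containment $Z_i \subset H_i$ forces the triangular structure $v_{i,j}=0$ for $j>i$, $v_{i,i}=1$, so the $v_i$ form a basis, and then every complex line pairs nontrivially with some $v_i$ and must cross $H_i$, so it cannot lie in $\Omega$. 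Your version uses hypothesis (2) more literally (you hit $Z_i$ itself rather than a hyperplane containing it) and uses the full disks $\Db e_i$ to supply the base point $(1-\delta)e_i$, while the paper only needs $0\in\Omega$ from (1) but invokes separation; both are elementary. One small caution: your ``key elementary fact'' is false for arbitrary convex sets (the convex hull of a line and an off-line point contains the point but not the parallel line through it); it is the openness of $\Omega$ that makes it true --- the convex-combination limit only puts $p+sv$ in $\overline{\Omega}$, and you then need the standard fact that for open convex $\Omega$, $p\in\Omega$ and $p+2sv\in\overline{\Omega}$ give $p+sv\in\Omega$. You do invoke openness, but state the lemma so that this hypothesis is explicit. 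Also, the opening normalization ``after a translation we may assume $0\in L$'' should be dropped: translating $\Omega$ would disturb hypotheses (1) and (2), and you never actually use it.
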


\begin{proof}
Since $\Omega$ is convex, for each $1 \leq i \leq d$ there exists a real hyperplane $H_{i}$ so that $Z_i \subset H_{i}$ and $H_{i} \cap \Omega = \emptyset$. Then for each $1 \leq i \leq d$ there exists some $v_i \in \Cb^d$ and $a_i \in \Rb$ so that 
\begin{align*}
H_i = \{ z \in \Cb^d : \Real \ip{v_i,z}=a_i\}.
\end{align*}
Since $0 \notin H_i$, we see that $a_i \neq 0$ and so we can assume that $a_i = 1$. Now suppose that $v_i=(v_{i,1}, \dots, v_{i,d})$. Then since $Z_i \subset H_i$, we see that $v_{i,j}=0$ for $j > i$ and $v_{i,i} =1$. Thus $v_1, \dots, v_d$ forms a $\Cb$-basis for $\Cb^d$. 

Now suppose that $L$ is a complex line, we claim that $L$ is not contained in $\Omega$. Fix $a, b \in \Cb^d$ so that
\begin{align*}
L = \{ b + az : z \in \Cb\}.
\end{align*}
 Then, since $v_1, \dots, v_d$ is a basis of $\Cb^d$, there exists $1 \leq i \leq d$ so that $\ip{v_i, a} \neq 0$. But then $L \cap H_i \neq \emptyset$ and so $L$ is not contained in $\Omega$. Since $L$ was an arbitrary complex line, we see that $\Omega$ contains no complex affine lines. 
 \end{proof}

\begin{proof}[Proof of Theorem~\ref{thm:weak_fund_domain}]
Suppose $(\Omega_n,0)$ is a subsequence in $\Kb$. By passing to a subsequence we can assume that $\overline{\Omega}_{n_k}$ converges in the local Hausdorff topology to some closed convex set $\Cc \subset \Cb^d$. Then $\Db e_i \subset \Cc$ for $1 \leq i \leq d$ and so $\Cc$ has non-empty interior. Let $\Omega$ be the interior of $\Cc$. Then $\Omega_n$ converges to $\Omega$ in the local Hausdorff topology. Moreover, 
\begin{enumerate} 
\item $\Db e_i \subset \Omega$ for each $1 \leq i \leq d$, 
\item $Z_i \cap \Omega=\emptyset$ for each $1 \leq i \leq d$. 
\end{enumerate}
So by Lemma~\ref{lem:K_defn}, $\Omega \in \Kb$ and thus $(\Omega_n,0)$ converges to $(\Omega, 0)$ in $\Kb$. 

Now suppose that $(\Omega, x) \in \Xb_{d,0}$. We will find some $A \in \Aff(\Cb^d)$ so that $A(\Omega, x) \in \Kb$. Pick $y_1, \dots, y_d \in \partial \Omega$ as follows: first let $y_1 \in \partial \Omega$ be the closest point to $x$ in $\partial \Omega$. Then supposing $y_1, \dots, y_{k-1}$ have already been selected, let $W$ be the maximal complex subspace through $x$ which is orthogonal to the complex lines 
\begin{align*}
x+\Cb(y_1-x), \dots, x+\Cb(y_{k-1}-x)
\end{align*}
and let $y_k$ be closest point to $x$ in $\partial \Omega \cap W$. 

Next let $T:\Cb^d \rightarrow \Cb^d$ be the translation $T(z) = z-x$ and let $U : \Cb^d \rightarrow \Cb^d$ be the unitary map defined by
\begin{align*}
(UT)(y_i) = \norm{x-y_i}e_i.
\end{align*}
Next consider the diagonal matrix 
\begin{align*}
\Lambda = \begin{pmatrix} 1/\norm{x-y_1} & & \\ & \ddots & \\ & & 1/\norm{x-y_d} \end{pmatrix}
\end{align*}
and the affine map $A=\Lambda U T$. Then by construction, 
\begin{enumerate} 
\item $\Db e_i \subset A\Omega$ for each $1 \leq i \leq d$, 
\item $Z_i \cap (A\Omega)=\emptyset$ for each $1 \leq i \leq d$. 
\end{enumerate}
Hence $A (\Omega,x) \in \Kb$. 

\end{proof}

\section{Rescaling convex domains}\label{sec:limit_domains}

Suppose $\Omega \subset \Cb^d$ is a convex domain and $x_n \in \Omega$ is a sequence converging to the boundary. Then by Theorem~\ref{thm:weak_fund_domain} it is always possible to find affine maps $A_n$ so that (after possibly passing to a subsequence) $A_n(\Omega, x_n)$ converges in $\Xb_{d,0}$ to some pair $(\Omega_\infty, x_\infty)$. In this section we describe some of the possible limit domains $\Omega_\infty$ when $\Omega$ has $C^\infty$ boundary. 

In the following it will be notationally convenient to work in $\Cb^{d+1}$ instead of $\Cb^d$. 

\subsection{Line type} In this subsection we recall the line type of a point in the boundary of a convex domain. This discussion is based on a paper of Yu~\cite{Y1992}.

Suppose $U \subset \Cb^d$ is a neighborhood of $0$ and $f: U \rightarrow \Rb$ is a $C^\infty$ function with $f(0)=0$. If $v \in \Cb^d$, let $U_v = \{ z \in \Cb : zv \in U\}$ and let $f_v : U_v \rightarrow \Cb$ be the function defined by $f_v(z)= f(zv)$. Finally let $\ord(f;v) \in \Nb \cup \{\infty\}$ denote the order of vanishing of the function $f_v$ at $z=0$.

\begin{observation} With the notation above and $r \in \Nb$, the following are equivalent: 
\begin{enumerate}
\item $\ord(f;v) \geq r$, 
\item $\frac{\partial^{k+\ell}}{\partial z^{k} \partial \overline{z}^{\ell}} f_v(0) = 0$ for all $k+\ell \leq r-1$, 
\item there exists $C > 0$ so that $\abs{f_v(z)} \leq C \abs{z}^r$ for $z$ sufficiently close to $0$.
\end{enumerate} 
\end{observation}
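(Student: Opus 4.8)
The plan is to prove the three conditions mutually equivalent by using (2) as the hub: I would establish (1)$\Leftrightarrow$(2), (2)$\Rightarrow$(3), and (3)$\Rightarrow$(2). The underlying point throughout is that $U_v = \{z \in \Cb : zv \in U\}$ is an open neighborhood of $0$ in $\Cb$ (since $U$ is open and contains $0$), so $f_v$ is a $C^\infty$ function of one complex variable $z = x+iy$ near the origin, and one may freely apply the two-real-variable Taylor theorem with remainder, working with the Wirtinger derivatives $\partial_z = \frac{1}{2}(\partial_x - i\partial_y)$ and $\partial_{\bar z} = \frac{1}{2}(\partial_x + i \partial_y)$. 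Recall that the Taylor coefficient of $z^k \bar z^\ell$ in the expansion of $f_v$ at $0$ is $\frac{1}{k!\,\ell!}\,\partial_z^k \partial_{\bar z}^\ell f_v(0)$.

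For (1)$\Leftrightarrow$(2): "the order of vanishing of $f_v$ at $0$ is $\geq r$" means precisely that the Taylor expansion of $f_v$ at $0$ has no terms of total degree $\leq r-1$, which by the formula for the coefficients is exactly condition (2). (If the paper's working definition of $\ord(f;v)$ is instead taken to be the $O(\abs{z}^m)$-characterization, then this implication is subsumed by the other two.) For (2)$\Rightarrow$(3): apply Taylor's theorem with remainder on a closed ball $\overline{B_\rho(0)} \subset U_v$. Since all partials of $f_v$ of total order $\leq r-1$ vanish at $0$, the degree-$(r-1)$ Taylor polynomial is identically zero, so $f_v(z)$ equals its order-$r$ remainder term; this is bounded by $C\abs{z}^r$ on $\overline{B_\rho(0)}$ with $C$ a fixed multiple of $\sup_{\overline{B_\rho(0)}}\sum_{k+\ell = r}\abs{\partial_z^k \partial_{\bar z}^\ell f_v}$, which is finite since $f_v \in C^\infty$.

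For (3)$\Rightarrow$(2): suppose for contradiction that some partial of $f_v$ of total order $m \leq r-1$ is nonzero at $0$, and take $m$ minimal with this property. The degree-$m$ homogeneous part $P(z,\bar z) = \sum_{k+\ell = m}\frac{1}{k!\,\ell!}\,\partial_z^k\partial_{\bar z}^\ell f_v(0)\, z^k \bar z^\ell$ of the Taylor expansion is then not identically zero; writing $z = e^{i\theta}$, the map $\theta \mapsto P(e^{i\theta}, e^{-i\theta})$ is a nonzero trigonometric polynomial (its Fourier coefficients are the $\partial_z^k\partial_{\bar z}^\ell f_v(0)$, not all zero), so there is $\omega$ with $\abs{\omega} = 1$ and $P(\omega,\bar\omega) =: c \neq 0$. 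By minimality of $m$ and Taylor's theorem, $f_v(t\omega) = c\, t^m + o(t^m)$ as $t \to 0^+$, contradicting $\abs{f_v(t\omega)} \leq C t^r = o(t^m)$ because $m < r$. Hence (2) holds.

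There is no substantive obstacle here: this is a routine fact about multivariable Taylor expansions. The only points requiring a little care are the Wirtinger-derivative bookkeeping and the step that a nonzero homogeneous expression in $z, \bar z$ cannot vanish identically on the unit circle; the genuinely content-bearing implications are (2)$\Rightarrow$(3) and (3)$\Rightarrow$(2), while (1)$\Leftrightarrow$(2) is essentially unwinding the definition of order of vanishing.
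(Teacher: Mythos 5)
Your argument is correct: the paper states this Observation without proof, treating it as a routine consequence of the two-variable Taylor expansion, and your write-up is exactly the standard argument that is being implicitly invoked (Wirtinger coefficients of the Taylor polynomial for (1)$\Leftrightarrow$(2), Taylor remainder for (2)$\Rightarrow$(3), and the homogeneous lowest-order term evaluated along a ray for (3)$\Rightarrow$(2)). No gaps; the only bookkeeping points (equivalence of Wirtinger and real partials of a given order, and nonvanishing of a nonzero homogeneous part on the unit circle) are handled correctly.
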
 

Now suppose that $f: U \subset \Cb^d \rightarrow \Rb$ is $C^\infty$, convex, non-negative, and $f(0)=0$. For any $r \in \Nb \cup \{\infty\}$ let 
\begin{align*}
V_r = \{ v \in \Cb^d : \ord(f;v) \geq r\}.
\end{align*}
Clearly $V_\infty \subset V_r$ and $V_{r+1} \subset V_r$ for all $r \in \Nb$. 

\begin{observation} With the notation above, $V_r$ is a complex linear subspace of $\Cb^d$. \end{observation}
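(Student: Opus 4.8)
The plan is to check directly that $V_r$ is closed under $\Cb$-linear combinations, using the characterization of order of vanishing in the preceding observation: $\ord(f;v)\geq r$ if and only if there is a constant $C>0$ with $\abs{f_v(z)}=\abs{f(zv)}\leq C\abs{z}^r$ for all $z$ near $0$. Since $\ord(f;v)$ depends only on the germ of $f$ at $0$, we may first shrink $U$ and assume it is a convex neighborhood of $0$, so that $f$ is convex along every segment contained in $U$.

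Two of the three subspace axioms are easy. First, $0\in V_r$ because $f_0\equiv 0$. Second, if $v\in V_r$ and $\lambda\in\Cb$, choose $C$ with $\abs{f(zv)}\leq C\abs{z}^r$ near $0$; then $\abs{f_{\lambda v}(z)}=\abs{f(\lambda z v)}\leq C\abs{\lambda}^r\abs{z}^r$ for $z$ near $0$, so $\lambda v\in V_r$ by the observation.

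The key step is additivity, and this is where convexity and non-negativity enter. Suppose $v,w\in V_r$ and pick $C_v,C_w>0$ and $\delta>0$ with $\abs{f(zv)}\leq C_v\abs{z}^r$ and $\abs{f(zw)}\leq C_w\abs{z}^r$ for $\abs{z}\leq\delta$. For $\abs{z}$ small enough the points $2zv$ and $2zw$, and the segment joining them, lie in $U$; since $z(v+w)$ is the midpoint of that segment, convexity of $f$ gives
\[
f\bigl(z(v+w)\bigr)\leq \tfrac{1}{2}f(2zv)+\tfrac{1}{2}f(2zw).
\]
Because $f\geq 0$, the left side equals $\abs{f(z(v+w))}$ and each term on the right is non-negative, bounded respectively by $\tfrac12 C_v\abs{2z}^r$ and $\tfrac12 C_w\abs{2z}^r$; hence $\abs{f(z(v+w))}\leq 2^{r-1}(C_v+C_w)\abs{z}^r$ for $\abs{z}$ small. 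By the observation $\ord(f;v+w)\geq r$, i.e.\ $v+w\in V_r$. Thus $V_r$ is a $\Cb$-linear subspace for each finite $r\in\Nb$, and for $r=\infty$ one has $V_\infty=\bigcap_{r\in\Nb}V_r$, an intersection of subspaces, hence again a subspace.

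The only real subtlety is justifying the convexity inequality: one must ensure the relevant segment lies in the domain of $f$ (handled by shrinking $U$ to be convex), and one uses $f\geq 0$ in an essential way to turn the convexity upper bound on $f$ into the two-sided bound on $\abs{f}$ demanded by the observation. Beyond that the argument is elementary; smoothness of $f$ is used only to invoke the equivalence in the observation, not in the additivity step itself.
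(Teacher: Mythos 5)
Your proof is correct and follows essentially the same route as the paper's: closure under scalar multiplication from the growth characterization of $\ord(f;v)$, and closure under addition via the convexity estimate $f(z(v+w))\leq \tfrac12 f(2zv)+\tfrac12 f(2zw)$ together with non-negativity, with $V_\infty=\bigcap_{r\in\Nb}V_r$ handling the infinite case. The extra care you take about shrinking $U$ to be convex is a reasonable tidying of a point the paper leaves implicit.
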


\begin{proof} First suppose that $r \in \Nb$. If $v \in V_r$, then clearly $\lambda v \in V_r$ for $\lambda \in \Cb$. Moreover, if $v,w \in V_r$ then there exists $C>0$ so that 
\begin{align*}
\abs{f(zv)}, \abs{f(zw)} \leq C\abs{z}^r
\end{align*}
for $z$ sufficiently small. Then for $z$ sufficiently small we have
\begin{align*}
\abs{f(z(v+w))} = f(zv+zw) \leq \frac{1}{2} f(2zv) + \frac{1}{2} f(2zw) \leq 2^{r} C \abs{z}^r
\end{align*}
since $f$ is convex and non-negative. 

Next consider the case in which $r=\infty$. Then $V_\infty = \cap_{r \in \Nb} V_r$ and so $V_\infty$ is also a subspace.
\end{proof}

Then there exists $r_1 < r_2 < \dots < r_k$ in $\Nb \cup \{\infty\}$ so that 
\begin{enumerate}
\item $r_1=\max\left\{ r \in \Nb \cup \{\infty\} : V_r = \Cb^d\right\}$,
\item $r_k = \max\left\{ r \in \Nb \cup \{\infty\} : V_{r} \neq (0)\right\}$, 
\item $V_{r_{k}} \subsetneq V_{r_{k-1}} \subsetneq \dots \subsetneq V_{r_1}$, and
\item $\ord(f;v) = r_i$ for all $v \in V_{r_{i}} \setminus V_{r_{i+1}}$.
\end{enumerate}
Now for $0 \leq i \leq k$ let 
\begin{align*}
d_i = \left\{ \begin{array}{ll} 
0 & \text{ if } i=0 \\
\dimension V_{r_i} & \text{ if } 1 \leq i \leq k.
\end{array}
\right.
\end{align*}
Finally define the \emph{type of $f$ at $0$} to be the $d$-tuple $m=(m_1, \dots, m_d)$ where 
\begin{align*}
m_j = r_i \quad \text{ if } \quad d_{i-1} < j \leq d_i.
\end{align*}

Now suppose that $\Omega \subset \Cb^{d+1}$ is a convex domain with $C^\infty$ boundary. Then we can associated to every point $\xi \in \partial \Omega$ a tuple $m(\xi)=(m_1, \dots, m_d)$ where $m_i \in \Nb \cup \{\infty\}$ as follows. Using an affine transformation we can assume that $\xi=0$, $T_{\xi} \partial \Omega = \Rb \times \Cb^d$, and $\Omega \subset \{ (z_1, \dots, z_{d+1}) \in \Cb^{d+1} : \Imaginary(z_1) > 0\}$. Now there exists neighborhoods $U \subset \Rb$, $V \subset \Rb$, $W \subset \Cb^d$, and a $C^\infty$ function $F: U \times W \rightarrow V$ so that 
\begin{align*}
\Omega \cap \Big( (U+iV) \times W\Big) = \left\{ (x+iy, z) : y > F(x, z)\right\}.
\end{align*}
Then the function $f=F|_{\{0\} \times \Cb^d} : W \rightarrow \Rb$ is $C^\infty$, convex, non-negative, and $f(0)=0$. Finally, let $m(\xi)$ be the type of $f$ at $z=0$.

\begin{definition} Suppose that $\Omega \subset \Cb^{d+1}$ is a convex domain with $C^\infty$ boundary. The \emph{line type} of $\xi \in \partial \Omega$ is the $d$-tuple $m(\xi) \in ( \Nb \cup \{\infty\})^d$. 
\end{definition}

\begin{remark}\
\begin{enumerate}
\item In the definition of $m(\xi)$, we made one choice: the initial affine transformation. However the value of $m(\xi)$ does not depend on this choice, see for instance~\cite[Proposition 2]{Y1992}.
\item $\Omega$ is strongly pseudoconvex at $\xi$ if and only if $m(\xi) = (2,\dots, 2)$. 
\end{enumerate}
\end{remark} 

\subsection{The finite type case}

Let $e_0,e_1 \dots, e_d$ be the standard basis of $\Cb^{d+1}$ and for $0 \leq i \leq d$ define the complex $(d-i)$-plane $Z_i$ by 
\begin{align*}
Z_i = \left\{ e_i + \sum_{j=1}^{d-i} z_j e_{i+j} : z_1, \dots, z_{d-i} \in \Cb\right\}.
\end{align*}

\begin{theorem}\label{thm:rescale_finite_type} 
Suppose that $\Omega \subset \Cb^{d+1}$ is a bounded convex domain with $C^\infty$ boundary, $\xi \in \partial \Omega$, and $m(\xi)=(m_1, \dots, m_d) \in \Nb^d$. Then there exists $x_n \in \Omega$ converging to $\xi$ and affine maps $A_n \in \Aff(\Cb^{d+1})$ so that 
\begin{align*}
A_n(\Omega, x_n) \rightarrow (\wh{\Omega}, 0) \text{ in } \Xb_{d+1,0}
\end{align*}
where 
\begin{align*}
\wh{\Omega} = \left\{ (x+iy, z) : x < 1- P(z) \right\}
 \end{align*}
and
 \begin{enumerate}
 \item  $P: \Cb^d \rightarrow \Rb$ is a convex, non-negative polynomial with $P(0)=0$, 
 \item  for all $t > 0$ and $(z_1, \dots, z_d) \in \Cb^d$ we have
 \begin{align*}
 P(t^{1/m_1}z_1, \dots, t^{1/m_d}z_d) = t P(z_1, \dots, z_d).
 \end{align*}
 \item $ \Db e_i \subset \wh{\Omega}$ for all $0 \leq i \leq d$. 
 \item $Z_i \cap \wh{\Omega} = \emptyset$ for all $0 \leq i \leq d$. 
  \end{enumerate}
 \end{theorem}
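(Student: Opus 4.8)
The plan is to realize $\wh\Omega$ as a scaling limit of $\Omega$ along the normal direction at $\xi$, using the dilations adapted to the line type $m(\xi)$. First I would set up coordinates as in the definition of line type: by an affine transformation assume $\xi = 0$, that $T_\xi\partial\Omega = \Rb \times \Cb^d$, and that $\Omega$ locally is $\{(x+iy,z) : y > F(x,z)\}$ for a $C^\infty$ function $F$ with $F(0,0)=0$, $dF(0,0)=0$, and $f(z) := F(0,z)$ convex, non-negative, of type $(m_1,\dots,m_d)$ at $0$. Choose the approach points $x_n = (i/n, 0, \dots, 0)$ (in the rotated coordinates, moving into $\Omega$ along the inner normal), so $x_n \to \xi$. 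The natural dilation is the anisotropic map $\Delta_n$ sending $(z_0, z_1, \dots, z_d) \mapsto (n z_0, n^{1/m_1} z_1, \dots, n^{1/m_d} z_d)$, precomposed with the translation by $-x_n$, and postcomposed with a rotation by $i$ in the first coordinate so that the domain is asymptotically $\{x < 1 - P(z)\}$ rather than $\{y > \cdots\}$; call the composite $A_n \in \Aff(\Cb^{d+1})$. Under $A_n$ the defining inequality $y > F(x,z)$ becomes, after the substitutions, $1 > n F(x_0/n, \Delta_n^{-1}z)$ where $x_0$ is the real part in the new frame, and the right-hand side converges (locally uniformly, by Taylor expansion of $F$ and the homogeneity structure of the type) to a polynomial $P(z)$ obtained as the sum of the "weighted-degree-$1$" part of the Taylor expansion of $f$ — this is exactly the polynomial whose existence, convexity, non-negativity, vanishing at $0$, and weighted homogeneity (property (2)) one reads off from the definition of type.

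The key steps, in order: (i) fix the coordinate normalization and the dilation $\Delta_n$; (ii) compute $n F(x_0/n, \Delta_n^{-1} z)$ and show the limit is the weighted-homogeneous polynomial $P$, controlling error terms using that $F \in C^\infty$ and that monomials of weighted degree $>1$ scale to $0$ while those of weighted degree $<1$ would violate convexity/nonnegativity of $f$ near $0$ and hence must be absent — this gives properties (1) and (2); (iii) upgrade local convergence of the defining functions to convergence of the domains $A_n(\Omega)$ to $\wh\Omega$ in the local Hausdorff topology on $\Xb_{d+1}$, which requires an a priori estimate that $A_n(\Omega)$ stays bounded away from infinity on compact sets in the "good" directions and escapes to infinity elsewhere (a standard consequence of convexity plus boundedness of $\Omega$: one uses supporting hyperplanes at $\xi$ and the fact that $\Omega$ is bounded so that, away from the tangent directions, the rescaled domain runs off to infinity); (iv) verify that $A_n(x_n) \to 0$, which is immediate from the construction; (v) finally arrange, by a further fixed affine change (independent of $n$, or absorbed into $A_n$), that $\Db e_i \subset \wh\Omega$ and $Z_i \cap \wh\Omega = \emptyset$ for $0 \le i \le d$ — properties (3) and (4). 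For (v) one should check that $\wh\Omega$, being of the form $\{x < 1 - P(z)\}$ with $P$ convex non-negative and $P(0)=0$, already contains the disk $\Db e_0$ (take $z=0$: the slice is $\{x<1\} \supset \Db$) and contains each $\Db e_i$ for $i\ge 1$ after we note $P$ vanishes to order $m_i \ge 2$ along $e_i$, so rescaling $z_i$ if necessary makes $P(\zeta e_i) < 1$ for $|\zeta|<1$; the hyperplane-disjointness conditions follow because $\wh\Omega$ has a supporting real hyperplane $\{x = 1\}$ at $e_0$ and analogous supporting hyperplanes in the other coordinate directions coming from convexity of $P$. These last normalizations are bookkeeping: the affine group action in the coordinate directions $z_1, \dots, z_d$ is by the diagonal scalings that we are still free to choose, and $Z_i$ was defined precisely to be the affine subspace that such normalized domains avoid.

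The main obstacle I expect is step (iii): passing from convergence of the local defining functions near $\xi$ to honest convergence of the rescaled domains in $\Xb_{d+1,0}$. The subtlety is that $F$ only describes $\partial\Omega$ in a small neighborhood of $\xi$, while the dilations $\Delta_n$ blow up that neighborhood to fill all of $\Cb^{d+1}$; one must show that the part of $\partial\Omega$ outside the coordinate patch does not contribute any "spurious" boundary in the limit, i.e. that $A_n(\Omega \setminus (\text{patch}))$ escapes every compact set. This is where convexity is essential: since $\Omega$ is convex and bounded, it lies on one side of its tangent hyperplane at $\xi$ and inside a large ball, and a direct estimate shows that under the dilations the image of any point of $\Omega$ outside a fixed neighborhood of $\xi$ has Euclidean norm tending to $\infty$; combined with the fact that $\Omega$ itself contains a fixed cone into its interior at $\xi$ (so the limit has non-empty interior), this pins down the limit to be exactly $\wh\Omega$. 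I would organize this as: (a) an inner estimate showing a fixed region sits inside $A_n(\Omega)$ for all large $n$ (from the Taylor expansion and $x_n$ going into $\Omega$ along the normal), and (b) an outer estimate showing $A_n(\Omega)$ is contained, on each ball $B_R(0)$, in an $\epsilon_n$-neighborhood of $\wh\Omega^{(R)}$ with $\epsilon_n \to 0$ (from the local defining function plus the escape-to-infinity of the far part). Together with Lemma~\ref{lem:K_defn}-type reasoning ensuring $\wh\Omega$ contains no complex affine line — which here follows from conditions (3) and (4), exactly as in that lemma — this completes the argument and places $\wh\Omega$ in $\Xb_{d+1}$. The rest is routine verification of (1)--(4) from the explicit form of $P$.
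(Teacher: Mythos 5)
There is a genuine gap, and it sits exactly where you dismiss the work as ``bookkeeping.'' Your steps (i)--(iv) are an outline of the anisotropic-dilation argument that the paper simply quotes as Proposition 2 of Gaussier~\cite{G1997}; that part is fine in spirit (if sketchy on the inner/outer estimates). But the normalizations (3) and (4) are the actual content of the paper's proof, and your justification for them does not work. You claim that after diagonal rescalings in the $z_1,\dots,z_d$ directions, $Z_i \cap \wh{\Omega} = \emptyset$ ``follows because $\wh{\Omega}$ has \dots analogous supporting hyperplanes in the other coordinate directions coming from convexity of $P$.'' Convexity gives you \emph{some} supporting real hyperplane at each boundary point $e_i$, but condition (4) needs that hyperplane to contain the specific complex affine subspace $Z_i = e_i + \Span_{\Cb}\{e_{i+1},\dots,e_d\}$, which amounts to the vanishing $\frac{\partial P}{\partial z_j}(e_i) = 0$ for all $j > i$. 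This fails in general and cannot be fixed by diagonal scalings alone: take $d=2$, $m=(2,2)$, $P(z_1,z_2) = \abs{z_1+z_2}^2$, which is convex, non-negative, weighted homogeneous, has $P(e_1)=1$ and $\Db e_1 \subset \wh{\Omega}$, yet $P(e_1 - \tfrac{1}{2}e_2) = \tfrac14 < 1$, so $Z_1$ enters $\wh{\Omega}$; replacing $z_2$ by $\lambda z_2$ never kills the cross term.

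What is actually needed (and what the paper does) is a further \emph{unitary} change of variables adapted to the weight filtration, not just scalings: within each block $W_\ell$ of coordinates sharing the same $m$-value one chooses an orthogonal frame by successively taking closest points of $\partial\wh{\Omega}_0$ to the origin in orthogonal complements, which forces $\frac{\partial P}{\partial z_j}(e_i)=0$ when $i<j$ lie in the same block; and for $i,j$ in different blocks ($m_i < m_j$) the vanishing is \emph{not} a consequence of any choice but of the weighted homogeneity: if $\frac{\partial P}{\partial z_j}(se_i)$ were a nonzero polynomial of degree $D$ in $s$, homogeneity forces $\frac{1}{m_j} + \frac{D}{m_i} = 1$, i.e.\ $D = m_i - m_i/m_j$, which is not an integer since $m_i < m_j$ --- contradiction. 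This block-adapted unitary plus the integrality argument is the missing idea in your proposal; without it, conditions (3)--(4) (and hence membership of $\wh{\Omega}$ in the compact family $\Kb$, which is the whole point of the theorem) are not established.
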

 
 \begin{remark} The last two conditions imply that $\wh{\Omega}$ is contained in the compact set $\Kb$ defined in Subsection~\ref{subsec:acts_cocompact}. \end{remark}
  
 \begin{proof}
 By Proposition 2 in~\cite{G1997}, there exists $x_n \in \Omega$ converging to $\xi$ and affine maps $B_n \in \Aff(\Cb^{d+1})$ so that 
\begin{align*}
B_n(\Omega, x_n) \rightarrow (\wh{\Omega}_0, 0) \text{ in } \Xb_{d+1,0}
\end{align*}
where 
\begin{align*}
\wh{\Omega}_0 = \left\{ (x+iy, z) : x < 1- P_0(z) \right\}
 \end{align*}
and
 \begin{enumerate}
 \item  $P_0: \Cb^d \rightarrow \Rb$ is a convex, non-negative polynomial with $P_0(0)=0$, 
 \item  for all $t > 0$ and $(z_1, \dots, z_d) \in \Cb^d$ we have
 \begin{align*}
 P_0(t^{1/m_1}z_1, \dots, t^{1/m_d}z_d) = t P_0(z_1, \dots, z_d).
 \end{align*}
 \end{enumerate}
 
The rest of the argument is devoted to producing an affine map $A$ so that $A_n = AB_n$ satisfies the rest of the conditions in the Theorem. 

Pick $0 = d_0 < d_1 < \dots < d_{r+1} = d$ so that: 
\begin{align*}
m_1 = \dots = m_{d_1} < m_{d_1+1} = \dots = m_{d_2} < \dots < m_{d_{r-1}+1} = \dots = m_{d}.
\end{align*}
Then define vector spaces
 \begin{align*}
 W_\ell = \Span_{\Cb} \{ e_i : d_\ell < i \leq d_{\ell+1}\}.
 \end{align*}
Now for each $0 \leq \ell \leq r$ we select an orthogonal basis $x_{d_\ell+1}, x_{d_\ell+2}, \dots, x_{d_{\ell+1}} \in W_\ell$ as follows: first pick $x_{d_\ell+1} \in W_\ell$ so that  $x_{d_\ell+1}$ is the closest point in $\partial \wh{\Omega}_0 \cap W_\ell$ to $0$. Then assuming $x_{d_\ell+1}, \dots, x_{d_\ell+k-1}$ have been selected pick  $x_{d_\ell+k} \in W_\ell$ so that $x_{d_\ell+k}$ is a closest point in 
\begin{align*}
\partial \wh{\Omega}_0 \cap \Span_{\Cb} \{ x_{d_\ell+1}, \dots, x_{d_\ell+k-1}\}^{\bot} \cap W_\ell
\end{align*}
to $0$.
 
Let $U : \Cb^{d+1} \rightarrow \Cb^{d+1}$ be the unitary map with $U(e_0) = e_0$ and $U(x_{k}) = \norm{x_k}e_k$ for $1 \leq k \leq d$. Next let 
\begin{align*}
\Lambda = 
\begin{pmatrix}
1 & & & \\ 
& 1/\norm{x_1} & & \\
& & \ddots & \\
& & & 1/\norm{x_d} 
\end{pmatrix}.
\end{align*}
Finally let $A = \Lambda U$ and consider $\wh{\Omega} =A\wh{\Omega}_0$. By construction 
\begin{align*}
\wh{\Omega} = \left\{ (x+iy, z) : x < 1 + P(z) \right\}
 \end{align*}
where $P: \Cb^d \rightarrow \Rb$ is a convex, non-negative polynomial with $P(0)=0$. Moreover, since $U$ and $\Lambda$ preserve the subspaces $W_\ell$ we see that 
 \begin{align*}
 P(t^{1/m_1}z_1, \dots, t^{1/m_d}z_d) = t P(z_1, \dots, z_d).
 \end{align*}
for all $t > 0$ and $(z_1, \dots, z_d) \in \Cb^d$. By the way we picked the $x_i$, we see that $e_i \in \partial \wh{\Omega}$ and $ \Db e_i \subset \wh{\Omega}$ for all $1 \leq i \leq d$. 

We now show that 
\begin{align*}
Z_i \cap \wh{\Omega} = \emptyset
\end{align*}
for all $0 \leq i \leq d$. By construction $Z_0 \cap \wh{\Omega} = \emptyset$. Now since $\partial \wh{\Omega}$ is $C^1$ (actually $C^\infty$) 
each $\xi \in \partial \wh{\Omega}$ is contained in a unique tangent real hyperplane $H$ and since $\wh{\Omega}$ is convex $H \cap \wh{\Omega} = \emptyset$. Now this hyperplane is given by: 
\begin{align*}
H = \xi + \left\{ z \in \Cb^{d+1} : \Real\left( z_0 + 2 \sum_{j=1}^d \frac{\partial P}{\partial z_j}(\xi)z_j \right) = 0 \right\}. 
\end{align*}
Thus it is enough to show that 
\begin{align*}
\frac{\partial P}{\partial z_j}(e_i)=0
\end{align*}
for all $1 \leq i < j \leq d$.

Fix $1 \leq i < d$ and assume that $d_\ell < i \leq d_{\ell+1}$. If $d_{\ell} < i < j \leq d_{\ell+1}$ then the way we selected $x_1, \dots, x_d$ implies that 
\begin{align*}
\frac{\partial P}{\partial z_j}(e_i)=0.
\end{align*}
In the case in which $i \leq d_{\ell+1} < j$ we have $m_i < m_j$. Since 
 \begin{align*}
 P(t^{1/m_1}z_1, \dots, t^{1/m_d}z_d) = t P(z_1, \dots, z_d)
 \end{align*}
 we see that 
 \begin{align}
 \label{eq:one}
t^{1/m_j}\frac{\partial P}{\partial z_j}(t^{1/m_i}e_i) = t\frac{\partial P}{\partial z_j}(e_i).
\end{align}
Now suppose for a contradiction that $\frac{\partial P}{\partial z_j}(e_i) \neq 0$. Then 
\begin{align*}
s \in \Rb \rightarrow \frac{\partial P}{\partial z_j}(s e_i)
\end{align*}
is a non-zero polynomial in $s$. Suppose this polynomial has degree $D$. Then comparing sides in Equation~\ref{eq:one} we see that 
\begin{align*}
\frac{1}{m_j} + \frac{D}{m_i} = 1.
\end{align*}
So $D = m_i - m_i/m_j$. But $D$ is an integer and $m_j > m_i$, so we have a contradiction. Thus 
 \begin{align*}
\frac{\partial P}{\partial z_j}(e_i)=0
\end{align*}
for all $1 \leq i < j \leq d$. And so 
\begin{align*}
Z_i \cap \wh{\Omega} = \emptyset
\end{align*}
for all $0 \leq i \leq d$. 

\end{proof}
 
 \subsection{The infinite type case}
 
 \begin{theorem}\label{thm:rescale_inf_type}
Suppose that $\Omega \subset \Cb^{d+1}$ is a bounded convex domain with $C^\infty$ boundary, $\xi \in \partial \Omega$, and $m(\xi) \notin \Nb^d$. Then there exists $x_n \in \Omega$ converging to $\xi$ and affine maps $A_n \in \Aff(\Cb^{d+1})$ so that 
\begin{align*}
A_n(\Omega, x_n) \rightarrow (\wh{\Omega}, 0) \text{ in } \Xb_{d+1,0}
\end{align*}
where 
\begin{enumerate}
\item $\Db e_i \subset \wh{\Omega}$ for $0 \leq i \leq d$,
\item $Z_i \cap \wh{\Omega} = \emptyset$ for $0 \leq i \leq d$, 
\item $e_0 + \Db e_1 \subset \partial \wh{\Omega}$.
\end{enumerate}
\end{theorem}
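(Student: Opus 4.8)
The plan is to mimic the structure of the proof of Theorem~\ref{thm:rescale_finite_type}, but using a rescaling adapted to an infinite-type direction rather than the polynomial rescaling of Gaussier. Since $m(\xi) \notin \Nb^d$, by definition of the line type there is some complex direction $v \in \Cb^d$ at $\xi$ with $\ord(f;v) = \infty$; that is, writing the boundary near $\xi$ as a graph $y > F(0,z) = f(z)$ with $f$ convex, non-negative, and $f(0)=0$, the restriction $f_v(t) = f(tv)$ vanishes to infinite order at $t=0$. Geometrically this says that the complex line $\xi + \Cb v$ (suitably identified) is ``very tangent'' to $\partial\Omega$ at $\xi$: on one side of the supporting hyperplane, the boundary approaches the line faster than any polynomial.

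First I would normalize by an affine map so that $\xi = 0$, the tangent hyperplane $T_\xi\partial\Omega = \Rb \times \Cb^d$, and $\Omega$ lies locally on the correct side. Then I would pick a sequence $x_n \in \Omega$ going to $\xi$ along (roughly) the inward normal direction, and construct affine maps $A_n$ as a composition of a translation sending $x_n$ to $0$, a dilation in the normal direction $e_0$ that normalizes the ``distance to the boundary'' in that direction to order $1$, and dilations in the remaining coordinates chosen so that the domain does not degenerate. The key point, analogous to the scaling $t^{1/m_i}$ in the finite-type theorem, is that along the infinite-type direction $e_1$ the correct dilation factor grows faster than any power of the normal dilation; because $f$ vanishes to infinite order along $v$, after rescaling the contribution of that direction to the defining inequality disappears in the limit, so that the limiting domain $\wh\Omega$ contains the whole affine disc $e_0 + \Db e_1$ in its boundary — this is condition (3). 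Conditions (1) and (2) I would arrange exactly as in Theorem~\ref{thm:rescale_finite_type}: choosing the remaining basis vectors $x_2, \dots, x_d$ greedily as nearest boundary points in successive orthogonal complements guarantees $\Db e_i \subset \wh\Omega$ and, using convexity and the $C^1$ (indeed $C^\infty$) tangent hyperplanes together with the normalization, $Z_i \cap \wh\Omega = \emptyset$.

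I expect the main obstacle to be showing that the rescaled domains $A_n(\Omega)$ actually converge (after passing to a subsequence) in the local Hausdorff topology to an honest convex domain $\wh\Omega \in \Xb_{d+1}$ — i.e.\ that the limit is neither empty nor all of $\Cb^{d+1}$, and contains no complex affine line — and precisely that $e_0 + \Db e_1 \subset \partial\wh\Omega$ rather than $e_0 + \Db e_1 \subset \wh\Omega$ (the disc must lie in the boundary, not the interior). This is where the infinite order of vanishing is used quantitatively: one must show that the rescaling factor in the $e_1$-direction can be chosen large enough that the disc ``escapes to the boundary'' yet not so large that the domain collapses. Compactness of the rescalings, once the normalizations in conditions (1)–(2) are in place, follows from Theorem~\ref{thm:weak_fund_domain} since those conditions place every $A_n(\Omega, 0)$ in the compact set $\Kb$; the work is in verifying that a limit point of the sequence satisfies (3) and does not accidentally satisfy $m = (2,\dots,2)$ — but since the limit contains a disc in its boundary through a boundary point, it is manifestly not strongly pseudoconvex there, so no separate argument is needed for that. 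Once (1)–(3) hold, Lemma~\ref{lem:K_defn} (via $\Kb$) gives $\wh\Omega \in \Xb_{d+1}$, completing the proof.
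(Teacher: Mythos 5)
There is a genuine gap at the heart of your proposal: the step you yourself flag as ``the main obstacle'' --- choosing the dilation factors so that the rescaled domains $A_n(\Omega,x_n)$ converge in the local Hausdorff topology to a nondegenerate convex domain having the affine disc $e_0+\Db e_1$ in its \emph{boundary} --- is never actually carried out. Saying that the $e_1$-dilation ``grows faster than any power of the normal dilation'' because $\ord(f;v)=\infty$ is a heuristic, not an argument: one must exhibit the scaling, show the limit is neither empty nor all of $\Cb^{d+1}$, and show that no point of the disc becomes interior in the limit (local Hausdorff convergence does not preserve ``lies in the boundary'' without a supporting-hyperplane argument that persists under the rescaling). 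This is exactly the content that the paper does not reprove but imports wholesale from (the proof of) Proposition 6.1 in \cite{Z2016}, which supplies $x_n$, affine maps $B_n$, and a limit $\wh{\Omega}_0$ with $\Db e_0\cup\Db e_1\subset\wh{\Omega}_0$ and $(e_0+\Db e_1)\cup\{e_1\}\subset\partial\wh{\Omega}_0$. Without either citing that result or reconstructing its quantitative rescaling, your proof of condition (3) is missing.

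The remaining normalization is also not as routine as you claim. In the finite-type case, $Z_i\cap\wh{\Omega}=\emptyset$ was proved using the explicit quasi-homogeneous polynomial graph structure of the limit: the tangent hyperplanes were computed from $P$ and the off-diagonal derivatives $\frac{\partial P}{\partial z_j}(e_i)$ were killed by an integrality-of-degree argument. In the infinite-type case the limit domain is just a convex domain, need not be a smooth graph, and that argument has no analogue; moreover you cannot rotate the $e_0$- and $e_1$-directions (a nearest-point plus unitary-plus-diagonal normalization in those coordinates would destroy the boundary disc), so getting $Z_0\cap\wh{\Omega}=\emptyset$ and $Z_1\cap\wh{\Omega}=\emptyset$ compatibly with condition (3) requires more freedom than your scheme allows. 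The paper handles this differently: it chooses points $y_0=e_0,y_1,\dots,y_d\in\partial\wh{\Omega}_0$ together with a nested chain of complex subspaces $W_0\supset W_1\supset\cdots$ with $\Db e_1\subset W_0$ and $(y_k+W_k)\cap\wh{\Omega}_0=\emptyset$ (these exist by convexity, using that a supporting hyperplane at the center of the boundary disc must contain the whole complex line of the disc), and then applies a general linear map $A\in\GL_{d+1}(\Cb)$, not a unitary composed with a diagonal, sending $y_i\mapsto e_i$ and $W_i\mapsto\Span_{\Cb}\{e_{i+1},\dots,e_d\}$; this simultaneously yields (1), (2) and preserves (3). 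You would need to replace your ``exactly as in Theorem~\ref{thm:rescale_finite_type}'' step by some such shear-type construction for the argument to close.
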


\begin{proof}
By (the proof of) Proposition 6.1 in~\cite{Z2016}, there exists $x_n \in \Omega$ converging to $\xi$ and affine maps $B_n \in \Aff(\Cb^{d+1})$ so that 
\begin{align*}
B_n(\Omega, x_n) \rightarrow (\wh{\Omega}_0, 0) \text{ in } \Xb_{d+1,0}
\end{align*}
where 
\begin{enumerate}
\item $\Db e_0 \cup \Db e_1 \in \wh{\Omega}_0$,
\item $(e_0 + \Db e_1) \cup\{ e_1 \}\subset \partial \wh{\Omega}_0$.
\end{enumerate}

The rest of the argument is devoted to producing an affine map $A$ so that $A_n = AB_n$ satisfies the rest of the conditions in the Theorem. 

We begin by selecting $y_0, \dots, y_d \in \partial \wh{\Omega}_0$ and subspaces $W_0, \dots, W_d \subset \Cb^{d+1}$ as follows: first let $y_0 = e_0$ and let $W_0$ be a complex hyperplane so that:
\begin{enumerate}
\item $\Db e_1 \subset W_0$, 
\item $(y_0 + W_0) \cap \wh{\Omega}_0 = \emptyset$. 
\end{enumerate}
Since $\wh{\Omega}_0$ is convex, such a hyperplane exists. Next pick $y_1 \in \Cb e_1 \cap \partial \wh{\Omega}_0$ so that 
\begin{align*}
\norm{y_1} = \inf\{\norm{z} : z \in \Cb e_1 \cap \partial \wh{\Omega}_0\}.
\end{align*}
Since $e_1 \in \partial \wh{\Omega}_0$, we have that $\norm{y_1} \leq 1$. Then let $W_1$ be a complex subspace so that 
\begin{enumerate}
\item $\dimension_{\Cb} W_1 = \dimension_{\Cb} W_0-1 = d-1$, 
\item $W_1 \subset W_0$, 
\item  $(y_1 + W_1) \cap \wh{\Omega}_0 = \emptyset$. 
\end{enumerate}
Now supposing that $y_1, \dots, y_k \in  \partial \wh{\Omega}_0$ and $W_0, \dots, W_k \subset \Cb^{d+1}$ have already been selected, select $y_{k+1} \in W_k \cap \partial \wh{\Omega}_0$ so that 
\begin{align*}
\norm{y_{k+1}} = \inf\{\norm{z} : z \in W_k \cap \partial \wh{\Omega}_0\}
\end{align*}
and let $W_{k+1}$ be a complex subspace so that 
\begin{enumerate}
\item $\dimension_{\Cb} W_{k+1} = \dimension_{\Cb} W_{k} -1 = d-k-1$, 
\item $W_{k+1} \subset W_k$, 
\item  $(y_{k+1} + W_{k+1}) \cap \wh{\Omega}_0 = \emptyset$. 
\end{enumerate}

Next let $A \in \GL_{d+1}(\Cb)$ be the linear map so that 
\begin{align*}
A(y_i) = e_i \text{ for all } 0 \leq i \leq d.
\end{align*}
Then 
\begin{align*}
A(W_i) = \Span_{\Cb}\{e_{i+1}, \dots, e_{d}\}.
\end{align*}
So if $\wh{\Omega} := A\wh{\Omega}_0$ and $A_n:=AB_n$, then 
\begin{align*}
A_n(\Omega, x_n) \rightarrow (\wh{\Omega}, 0) \text{ in } \Xb_{d+1,0}
\end{align*}
where 
\begin{enumerate}
\item $\Db e_i \subset \wh{\Omega}$ for $0 \leq i \leq d$,
\item $Z_i \cap \wh{\Omega} = \emptyset$ for $0 \leq i \leq d$, 
\item $e_0 + \Db e_1 \subset \partial \wh{\Omega}$.
\end{enumerate}
\end{proof}

\section{The rescaled domains}

In this section we show that the domains obtained by rescaling in Section~\ref{sec:limit_domains} are contained in a compact subset of $\Xb_{d+1}$. 

Let $\Fb \subset \Xb_{d+1}$ be the set of domains $\Omega \in \Xb_{d+1}$ with the following properties 
\begin{enumerate}
\item $\Db e_i \subset \Omega$ for $0 \leq i \leq d$, 
\item $Z_i \cap \Omega = \emptyset$ for $0 \leq i \leq d$, 
\item $e_0 + \Db e_1 \subset \partial \Omega$.
\end{enumerate}
Next suppose that $m=(m_1, \dots, m_d)$ and $2 \leq m_1 \leq m_2 \leq \dots \leq m_d<\infty$. Let $\Pb(m) \subset \Xb_{d+1}$ be the set of domains $\Omega$ where
\begin{align*}
\Omega= \left\{ (x+iy, z) : x < 1- P(z) \right\}
 \end{align*}
and
 \begin{enumerate}
 \item  $P: \Cb^d \rightarrow \Rb$ is a convex, non-negative polynomial with $P(0)=0$, 
 \item  for all $t > 0$ and $(z_1, \dots, z_d) \in \Cb^d$ we have
 \begin{align*}
 P(t^{1/m_1}z_1, \dots, t^{1/m_d}z_d) = t P(z_1, \dots, z_d),
 \end{align*}
 \item $ \Db e_i \subset \Omega$ for all $0 \leq i \leq d$, and
 \item $Z_i \cap \Omega = \emptyset$ for all $0 \leq i \leq d$. 
  \end{enumerate}
  Finally, define 
  \begin{align*}
  \Lb := \Fb \cup \cup_{m \neq  (2, \dots, 2) }  \Pb(m).
\end{align*}

\begin{theorem}\label{thm:L_is_cpct}
The subset $\Lb \subset \Xb_{d+1}$ is compact and if $\Omega \in \Lb$, then $\Omega$ is not biholomorphic to $\Bc$. 
\end{theorem}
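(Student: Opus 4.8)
The plan is to break the statement into two independent assertions: compactness of $\Lb$ in $\Xb_{d+1}$, and the fact that no $\Omega \in \Lb$ is biholomorphic to the unit ball. For compactness, I would first observe that by construction every $\Omega \in \Lb$ satisfies conditions (1) and (2) defining the set $\Kb$ from Subsection~\ref{subsec:acts_cocompact} (after the shift to $\Cb^{d+1}$): $\Db e_i \subset \Omega$ and $Z_i \cap \Omega = \emptyset$ for $0 \leq i \leq d$. Lemma~\ref{lem:K_defn} then guarantees $\Omega$ contains no complex affine line, so $\Omega \in \Xb_{d+1}$, and moreover the argument in the proof of Theorem~\ref{thm:weak_fund_domain} shows the set of such domains is precompact: any sequence in $\Kb$ has a subsequence whose closures converge in the local Hausdorff topology, and the limit again satisfies (1)–(2), hence lies in $\Kb$. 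So it suffices to show $\Lb$ is \emph{closed} in $\Kb$. Take a sequence $\Omega_n \in \Lb$ with $\Omega_n \to \Omega_\infty$. Either infinitely many $\Omega_n$ lie in $\Fb$, or infinitely many lie in a single $\Pb(m)$ — because the possible tuples $m$ with $2 \leq m_1 \leq \dots \leq m_d < \infty$ that can occur are \emph{not} a priori finite in number, so here I must be careful: I would argue that the homogeneity condition forces a uniform bound on the degree of $P$ (the domain must contain $\Db e_i$, and the polynomials are convex, non-negative, with prescribed weighted-homogeneity), so only finitely many $m$ arise among domains meeting $\Kb$; alternatively, one can allow $m_d \to \infty$ along the sequence and check the limit lands in $\Fb$. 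In the $\Fb$ case, the extra condition $e_0 + \Db e_1 \subset \partial\Omega$ is a closed condition under local Hausdorff convergence (the disc $e_0 + \Db e_1$ stays in $\overline{\Omega_\infty}$ but cannot enter the open set $\Omega_\infty$, since it is disjoint from each $\Omega_n$'s interior up to the boundary), so $\Omega_\infty \in \Fb$. In the $\Pb(m)$ case, writing $\Omega_n = \{x < 1 - P_n(z)\}$, the $P_n$ are weighted-homogeneous convex polynomials of bounded degree vanishing at $0$; local Hausdorff convergence of the domains forces the coefficients of $P_n$ to converge (the graphs converge, and on the relevant balls a convex polynomial of bounded degree is determined by its values), so $P_n \to P_\infty$ with $P_\infty$ satisfying the same four conditions, whence $\Omega_\infty \in \Pb(m)$. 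If $m_d \to \infty$ along a subsequence, I expect the limit to degenerate into a domain in $\Fb$; this boundary case between the two families is the main technical point to nail down.

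For the second assertion, suppose $\Omega \in \Lb$ were biholomorphic to $\Bc$. If $\Omega \in \Pb(m)$ with $m \neq (2,\dots,2)$, the domain $\Omega = \{x < 1 - P(z)\}$ has smooth boundary only where $P$ is smooth, but more to the point it has a boundary point (namely the image of a generic point, using the weighted homogeneity to rescale) that is of finite line type $m \neq (2,\dots,2)$, hence not strongly pseudoconvex; since $\Omega$ is convex and some boundary point fails to be strongly pseudoconvex, $\Omega$ cannot be biholomorphic to the ball — here I would invoke the Wong–Rosay theorem, or more directly the fact that the unit ball, being homogeneous with every boundary point strongly pseudoconvex of line type $(2,\dots,2)$, cannot be biholomorphic to a convex domain possessing a finite-type boundary point with a different type tuple, because the line type is a biholomorphic invariant of convex domains (cf.\ the remark after the definition of line type, citing~\cite{Y1992}) combined with the scaling/automorphism argument. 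If instead $\Omega \in \Fb$, then $e_0 + \Db e_1 \subset \partial\Omega$: the boundary contains an entire analytic disc. The unit ball's boundary contains no nontrivial analytic disc (its boundary is strictly convex), and this property is preserved under biholomorphism in the sense that a domain biholomorphic to $\Bc$ — being homogeneous with transitive automorphism group acting properly — cannot have a boundary analytic disc; more concretely one uses that for $\Omega$ convex with an analytic disc in the boundary, the Kobayashi metric degenerates in a way incompatible with that of $\Bc$, or one appeals to the Wong–Rosay theorem which would force $\Omega$ strongly pseudoconvex, contradicting the presence of the disc. So no $\Omega \in \Lb$ is biholomorphic to $\Bc$.

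I would structure the writeup as two lemmas matching the two assertions, and within the compactness lemma handle $\Fb$ and the $\Pb(m)$ first, then treat the interaction. The main obstacle is the compactness half, specifically controlling the family $\{\Pb(m) : m \neq (2,\dots,2)\}$: I must either establish a uniform degree bound on the polynomials $P$ that can occur for domains lying in $\Kb$ (which I believe follows from convexity plus $\Db e_i \subset \Omega$ forcing $P(e_i/|\cdot|\text{-scale})$ bounded, hence bounding the weights $m_i$ and thus the degree), or carefully identify the local-Hausdorff limits of sequences with $m_d \to \infty$ as members of $\Fb$. The non-biholomorphy half is comparatively routine given the line-type invariance and the no-analytic-disc property of $\partial\Bc$, though one should state cleanly which classical result (Wong–Rosay, or Bun Wong's characterization of the ball) is being used.
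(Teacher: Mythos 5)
Your overall architecture (precompactness inside $\Kb$ plus a closedness analysis split between $\Fb$ and the $\Pb(m)$'s, then a case-by-case non-biholomorphy argument) matches the paper, and your treatment of a sequence staying in a single $\Pb(m)$ with $m$ fixed and finite is essentially the paper's Case 1. But there are two genuine gaps. First, your primary route for the compactness half fails: it is \emph{not} true that only finitely many tuples $m$ occur among normalized domains. For every $k\geq 1$ the domain $\{(x+iy,z): x<1-(\sum_{i=1}^d\abs{z_i}^2)^k\}$ satisfies all four conditions defining $\Pb((2k,\dots,2k))$, so no uniform degree bound exists and the case $m_d^{(n)}\to\infty$ cannot be avoided. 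That case is exactly the point you leave ``to nail down,'' and it is where the paper does real work: writing $\Omega_n=\{x<1-P_n(z)\}$, the normalization $\Db e_d\subset\Omega_n$ gives $P_n(\zeta e_d)\leq 1$ for $\abs{\zeta}\leq 1$, and weighted homogeneity then gives $P_n(0,\dots,0,z_d)=\abs{z_d}^{m_d^{(n)}}P_n(0,\dots,0,z_d/\abs{z_d})\leq\abs{z_d}^{m_d^{(n)}}\to 0$ on the unit disc, so the limit domain has an affine disc of the form $e_0+\Db e_d$ in its boundary and lands in $\Fb$. Without this (short but essential) computation your compactness proof is incomplete.

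Second, your non-biholomorphy arguments rest on tools that do not apply to these \emph{unbounded} model domains. For $\Omega\in\Pb(m)$, ``line type is a biholomorphic invariant'' is not established anywhere (the paper's remark only gives independence of the affine normalization), and Wong--Rosay is a statement about bounded domains with an automorphism orbit accumulating at a strongly pseudoconvex boundary point; neither gives you directly that a convex unbounded domain with a boundary point of type $m\neq(2,\dots,2)$ cannot be biholomorphic to $\Bc$. The paper treats this as a nontrivial fact and cites Coupet--Pinchuk \cite{CP2001} for precisely these weighted-homogeneous models. For $\Omega\in\Fb$ your instinct (``the Kobayashi geometry is incompatible with that of $\Bc$'') is the right one, but the Kobayashi metric on these domains is nondegenerate, so ``degenerates'' is not the mechanism; the paper makes this precise via Gromov hyperbolicity: an analytic disc in the boundary of a convex domain forces $(\Omega,K_\Omega)$ to be non-Gromov-hyperbolic (Theorem 3.1 of \cite{Z2016}), whereas $(\Bc,K_{\Bc})$ is complex hyperbolic space, hence Gromov hyperbolic, and Gromov hyperbolicity is a biholomorphic (indeed isometric) invariant. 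You need either these citations or substitutes of comparable strength; as written, both halves of your second assertion assume what must be proved.
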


\begin{proof} Let $\Fb_0 = \{ (\Omega, 0) : \Omega \in \Fb\}$. Then $\Fb_0$ is a closed subset of $\Kb$ and so Theorem~\ref{thm:weak_fund_domain} implies that $\Fb_0$ is compact. Hence $\Fb$ is compact. Thus, to show that $\Lb$ is compact it is enough to  consider a sequence $\Omega_n \in \Pb(m^{(n)})$ and show that there exists a subsequence $\Omega_{n_k}$ which converges to some domain in $\Lb$.  

Since $(\Omega_n, 0) \in \Kb$, we can pass to a subsequence so that $\Omega_n$ converges to some $\Omega$ in $\Xb_{d+1}$. 

Suppose that $P_n$ is the polynomial so that 
 \begin{align*}
 \Omega_n = \{ (x+iy, z) : x < 1- P_n(z) \}.
 \end{align*}
By passing to a subsequence we can suppose that 
 \begin{align*}
 m_i := \lim_{n \rightarrow \infty} m_i^{(n)}
 \end{align*}
 exists in $\Nb \cup \{\infty\}$ for all $1 \leq i \leq d$. \newline

\noindent \textbf{Case 1:}  $ m_d < \infty$. Then by passing to a subsequence we can assume that 
\begin{align*}
\left(m_1^{(n)}, \dots, m_d^{(n)}\right) =m=(m_1, \dots, m_d)
\end{align*}
for all $n \in \Nb$. This implies that  there exists an $N > 0$ so that 
\begin{align*}
P_n(z) = \sum_{2 \leq \abs{\alpha} + \abs{\beta} \leq N} c_{\alpha, \beta}^{(n)} z^{\alpha} \overline{z}^{\beta}.
\end{align*}

Now since $\Db e_i \subset \Omega_n$ we see that $P_n(ze_i) \leq 1$ for $1 \leq i \leq d$ and $\abs{z} \leq 1$. Then by convexity, $P_n(z) \leq 1$ for 
\begin{align*} 
z \in\Cc:= \mathrm{ConvexHull}\left( \Db e_1 \cup \dots \cup \Db e_d \right).
\end{align*}
Since $P_n$ is non-negative, we then see that 
\begin{align*}
\sup_{z \in \Cc} \abs{P_n(z)} \leq 1.
\end{align*}
Now by the equivalence of finite dimensional norms, there exists some $C >0$ so that
\begin{align*}
\sup_{2 \leq \abs{\alpha} + \abs{\beta} \leq N} \abs{  c_{\alpha, \beta}^{(n)}} \leq C \sup_{z \in \Cc} \abs{  \sum_{2 \leq \abs{\alpha} + \abs{\beta} \leq N} c_{\alpha, \beta}^{(n)} z^{\alpha} \overline{z}^{\beta}} \leq C
\end{align*}
Thus, after passing to a subsequence, we can suppose that 
\begin{align*}
\lim_{n \rightarrow \infty} c_{\alpha, \beta}^{(n)} = c_{\alpha, \beta}
\end{align*}
for all $2 \leq \abs{\alpha} + \abs{\beta} \leq N$. Then $P_n$ converges locally uniformly to the polynomial 
\begin{align*}
P(z) = \sum_{2 \leq \abs{\alpha} + \abs{\beta} \leq N} c_{\alpha, \beta} z^{\alpha} z^{\beta}.
\end{align*}
Then, since $\Omega_n$ converges to $\Omega$ in the local Hausdorff topology and 
\begin{align*}
\Omega_n = \{ (x+iy, z) : x < 1- P_n(z) \},
\end{align*}
we have
\begin{align*}
\Omega = \{ (x+iy, z) : x < 1- P(z) \}.
\end{align*}

We next claim that  $\Omega \in \Pb(m)$. Since the sequence $P_n$ converges locally uniformly to $P$ and each $P_n$ is convex and non-negative, we see that $P: \Cb^d \rightarrow \Rb$ is also convex and non-negative. Further, for $t > 0$ and $(z_1, \dots, z_d) \in \Cb^d$ we have
 \begin{align*}
 P(t^{1/m_1}z_1, \dots, t^{1/m_d}z_d) 
 & = \lim_{n \rightarrow \infty}  P_n(t^{1/m_1}z_1, \dots, t^{1/m_d}z_d) \\
 &  = \lim_{n \rightarrow \infty} t P_n (z_1, \dots, z_d) \\
 & = t P (z_1, \dots, z_d).
 \end{align*}
Since the sequence $\Omega_n$ converges to $\Omega$ in the local Hausdorff topology we see that 
\begin{enumerate}
 \item $ \Db e_i \subset \Omega$ for all $0 \leq i \leq d$ and 
 \item $Z_i \cap \Omega = \emptyset$ for all $0 \leq i \leq d$. 
  \end{enumerate}
Thus $\Omega \in \Pb(m)$. \newline

\noindent \textbf{Case 2:}  $ m_d= \infty$. Then using the fact that 
\begin{align*}
P_n(0, \dots, 0, z_d) = \abs{z_d}^{m_d^{(n)}} P\left(0, \dots, 0,  \frac{z_d}{\abs{z_d}} \right) \leq \abs{z_d}^{m_d^{(n)}} 
\end{align*}
we see that $e_0 + \Db e_d \subset \partial \Omega$. Thus $\Omega \in \Fb$. \newline

We now prove the second assertion of the theorem: \newline

\noindent \textbf{Claim:} If $\Omega \in \Lb$, then $\Omega$ is not biholomorphic to $\Bc$. \newline

If $\Omega \in \Pb(m)$ for some $m \neq (2,\dots, 2)$, then $\Omega$ is not biholomorphic to $\Bc$ by work of Coupet and Pinchuk~\cite{CP2001}. So suppose that $\Omega \in \Fb$ and let $K_\Omega$ be the Kobayashi distance on $\Omega$. Since $\partial \Omega$ contains a complex affine disk, Theorem 3.1 in~\cite{Z2016} implies that the metric space $(\Omega, K_\Omega)$ is not Gromov hyperbolic.  However, $\Bc$ endowed with its Kobayashi metric is a model of complex hyperbolic space which is Gromov hyperbolic. Since Gromov hyperbolicity is an isometric invariant, we see that $(\Omega, K_\Omega)$ and $(\Bc, K_{\Bc})$ cannot be isometric and thus $\Omega$  cannot be biholomorphic to $\Bc$. 

\end{proof}

\section{The proof of Proposition~\ref{prop:gen_Klembeck}}\label{sec:klembeck}

Proposition~\ref{prop:gen_Klembeck} is a consequence of Pinchuk's rescaling method (see~\cite{P1991}):

\begin{theorem}
Suppose $\Omega \subset\Cb^d$ is domain whose boundary is $C^2$ and strongly pseudoconvex in a neighborhood of some point $\xi \in \partial \Omega$. If $x_n \in \Omega$ is a sequence converging to $\xi$, then there exists affine maps $A_n \in \Aff(\Cb^d)$ so that $A_n \Omega$ converges in the local Hausdorff topology to 
\begin{align*}
\Uc = \left\{ (z_1, \dots, z_d) \in \Cb^d: \Imaginary(z_1) > \sum_{i=2}^d \abs{z_i}^2 \right\} 
\end{align*}
and $A_nx_n = (i,0,\dots, 0)$. 
\end{theorem}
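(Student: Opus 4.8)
The plan is to carry out Pinchuk's scaling construction at a strongly pseudoconvex boundary point. First I would localize and normalize: since the affine maps we build will dilate any fixed neighborhood of $\xi$ so as to exhaust $\Cb^d$, the local Hausdorff limit of $A_n\Omega$ depends only on the germ of $\Omega$ at $\xi$, so we may replace $\Omega$ by $\Omega \cap U$ for a small ball $U$ around $\xi$. After an affine change together with the standard quadratic normalization of the $2$-jet of $\partial\Omega$ at $\xi$ (the map $(z_1,z') \mapsto (z_1 + q(z'),z')$ with $q$ a holomorphic quadratic polynomial, which is where the $C^2$ hypothesis enters), we may assume $\xi = 0$, that the inner normal to $\partial\Omega$ at $0$ is $i e_1$, and that
\begin{align*}
\Omega \cap U = \{ z = (z_1,z') : \Imaginary(z_1) > \norm{z'}^2 + E(z) \}
\end{align*}
where $E(z) = o(\norm{z}^2)$ as $z \to 0$; strong pseudoconvexity is exactly what makes the Hermitian part of the $2$-jet positive definite on the complex tangent space, hence equal to $\norm{z'}^2$ after a linear change.

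Second, I would set up the scaling sequence. For $n$ large, let $\xi_n \in \partial\Omega$ be the point of $\partial\Omega$ nearest to $x_n$, which is well defined since $\partial\Omega$ is $C^2$, and set $\delta_n = \norm{x_n - \xi_n} \to 0$, so that $x_n = \xi_n + \delta_n \nu_n$ with $\nu_n$ the inner unit normal at $\xi_n$. Define $A_n = \Delta_n U_n T_n$, where $T_n(z) = z - \xi_n$ is a translation, $U_n$ is a unitary map with $U_n(\nu_n) = i e_1$ (so $U_n$ converges to the unitary aligning the normal at $0$, since $\xi_n \to 0$ and $\partial\Omega$ is $C^1$), and $\Delta_n(w_1,w') = (w_1/\delta_n,\, w'/\sqrt{\delta_n})$ is the anisotropic dilation. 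Then $A_n x_n = \Delta_n( i \delta_n e_1) = (i,0,\dots,0)$, which is one of the two conclusions.

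Third, I would prove $A_n\Omega \to \Uc$. Let $\rho(z) = \Imaginary(z_1) - \norm{z'}^2 - E(z)$, a defining function for $\Omega$ on $U$, and put $\rho_n = \delta_n^{-1}(\rho \circ A_n^{-1})$. Under the substitution $z_1 = \delta_n w_1$, $z' = \sqrt{\delta_n}\, w'$ (up to the lower-order corrections contributed by $T_n$ and $U_n$) the terms $\Imaginary(z_1)$ and $\norm{z'}^2$ are homogeneous of weight one and survive, producing $\Imaginary(w_1) - \norm{w'}^2$ in the limit; meanwhile, on a fixed ball $\norm{w} \le R$ one has $\norm{z} = O(\sqrt{\delta_n})$, so $\delta_n^{-1}E(z) = \delta_n^{-1}o(\norm{z}^2) = o(1)$, the contribution of the (quadratic) difference between the $2$-jets of $\partial\Omega$ at $\xi_n$ and at $0$ tends to $0$ since these jets vary continuously, and the corrections from $T_n$ and $U_n$ beyond leading order likewise vanish, with the same estimates holding for first and second derivatives. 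Hence $\rho_n \to \rho_\infty := \Imaginary(w_1) - \norm{w'}^2$ in $C^2_{\mathrm{loc}}(\Cb^d)$. Since $\nabla\rho_\infty$ does not vanish on $\{\rho_\infty = 0\}$, this convergence upgrades to local Hausdorff convergence of the sublevel sets, so $A_n(\Omega \cap U) = \{\rho_n < 0\}$ converges to $\{\rho_\infty < 0\} = \Uc$; since $A_n U$ eventually contains any fixed ball, $A_n\Omega \to \Uc$ in $\Xb_d$ (and $\Uc$ is convex and contains no complex affine line, so it does lie in $\Xb_d$).

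The hard part is this third step — the uniform $C^2$ estimate on the rescaled defining functions and, in particular, honestly controlling the remainder $E$ under the assumption of only $C^2$ boundary regularity: one cannot expand past second order, so everything rests on the bound $E = o(\norm{z}^2)$ together with the fact that the anisotropic dilation amplifies by at most $\delta_n^{-1}$ while the relevant piece of $\partial\Omega$ near $\xi_n$ has size $\delta_n$ in the normal direction and $\sqrt{\delta_n}$ in the complex tangential directions. The remaining ingredients — the localization, the choice of $\xi_n$, the convergence of $U_n$, the identity $A_n x_n = (i,0,\dots,0)$, and the check that $\Uc \in \Xb_d$ — are routine, and complete details can be found in~\cite{P1991}.
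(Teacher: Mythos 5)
Your proposal is the standard Pinchuk scaling argument, which is exactly what the paper has in mind (it simply cites the proof of Theorem~2 in~\cite{G1997}), and the second and third steps (nearest boundary points, anisotropic dilations $\left(w_1/\delta_n, w'/\sqrt{\delta_n}\right)$, uniform control of the $C^2$ Taylor remainder and of the variation of the $2$-jets at $\xi_n$) are carried out correctly. The gap is in your first step. The theorem demands $A_n \in \Aff(\Cb^d)$ acting on the original $\Omega$, but your normalization ``we may assume'' invokes the quadratic shear $(z_1,z') \mapsto (z_1+q(z'),z')$, which is not affine; the maps you actually produce are $\Delta_n U_n T_n$ composed with this shear, i.e.\ polynomial automorphisms, not affine maps. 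This cannot be dismissed as bookkeeping, because the pluriharmonic second-order terms that the shear removes are weight-homogeneous of weight one under the anisotropic dilation (just like $\norm{z'}^2$), so they survive the rescaling: without the shear, your own computation gives the limit $\{\Imaginary(w_1) > Q(w')\}$ where $Q$ is the full positive definite quadratic form (Hermitian plus pluriharmonic part) of the $2$-jet, not $\Uc$. And that limit is in general not affinely equivalent to $\Uc$: for instance for $\Omega$ agreeing near $\xi=0$ with $\{\Imaginary(z_1) > \tfrac32 x_2^2 + \tfrac12 y_2^2\}$, a direct check on the defining quadratic polynomials (the Hermitian and pluriharmonic quadratic parts transform separately under affine maps, and the pluriharmonic part cannot be made relatively small without killing the Hermitian part) shows no sequence of affine images of this model converges locally Hausdorff to $\Uc$. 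So the shear is precisely the obstruction separating the affine statement from the biholomorphic one, and discarding it at the start leaves the stated theorem unproved.

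Also, a smaller inaccuracy: even after the shear, the normal form $\Imaginary(z_1) > \norm{z'}^2 + E(z)$ with $E(z)=o(\norm{z}^2)$ is not achievable in general, since Hermitian terms involving $z_1$ (such as $\abs{z_1}^2$ and $\Real\!\left(\overline{z}_1\ell(z')\right)$) are of exact second order and cannot be removed holomorphically; they are harmless because they carry at least one factor of $z_1\sim\delta_n$ and so vanish after dividing by $\delta_n$, but the error term should be tracked in that form rather than as $o(\norm{z}^2)$. What your argument does prove is the version of the theorem in which $A_n$ is allowed to be an affine map composed with a fixed quadratic shear (equivalently, the limit is a quadric domain biholomorphic to $\Bc$); that weaker version is all that the paper's application in Proposition~\ref{prop:gen_Klembeck} actually requires, since intrinsic functions are invariant under biholomorphisms, but it is not the statement as written. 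To repair your write-up you should either keep the shear explicitly in the scaling maps and prove that statement, or prove convergence to $\{\Imaginary(w_1) > Q(w')\}$ with affine maps and record that this limit is biholomorphic to $\Uc$.
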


See the proof of Theorem 2 in~\cite{G1997} for a detailed argument.

\section{The proof of Theorem~\ref{thm:cont} and Theorem~\ref{thm:upper_cont}}\label{sec:main_result}

\begin{proof}[Proof of Theorem~\ref{thm:cont}]
Suppose for a contradiction that there does not exist such an $\epsilon>0$. Then for each $n > 0$ there exists some bounded convex domain $\Omega_n \subset \Cb^d$ with $C^\infty$ boundary so that 
\begin{align*}
\limsup_{z \rightarrow \partial \Omega_n} \abs{f(\Omega_n, z) - f(\Bc, 0)} < 1/n
\end{align*}
and $\Omega_n$ is not strongly pseudoconvex. Fix some $\xi_n \in \partial \Omega_n$ with $m(\xi_n) = \left(m_1^{(n)}, \dots, m_d^{(n)}\right)$ and $m_d^{(n)} > 2$. 

Now using Theorems~\ref{thm:rescale_finite_type} and~\ref{thm:rescale_inf_type}, for each $n \in \Nb$ we can find a sequence $x_m \rightarrow \xi_n$ and affine maps $A_m$ so that $A_m(\Omega_n, x_m)$ converges in $\Xb_{d,0}$ to $(\wh{\Omega}_n, 0)$ where $\wh{\Omega}_n \in \Lb$. 

We claim that 
\begin{align*}
\abs{f(\wh{\Omega}_n, x) -f(\Bc,0)}< 1/n
\end{align*}
for all $x \in \wh{\Omega}_n$. Now there exists $y_m \in \Omega_n$ so that $A_m y_m \rightarrow x$. Moreover, by Theorem~\ref{thm:dist_conv},
\begin{align*}
\lim_{m \rightarrow \infty} K_{\Omega_n}(x_m, y_m)= \lim_{m \rightarrow \infty} K_{A_m\Omega_n}(A_mx_m, A_my_m) = K_{\wh{\Omega}_n}(x,0).
\end{align*}
Since $K_{\Omega_n}$ is a proper metric and $x_m \rightarrow \xi_n \in \partial \Omega_n$, we then see that 
\begin{align*}
\lim_{m \rightarrow \infty} d_{\Euc}(y_m, \partial \Omega_n) =0.
\end{align*}
Hence 
\begin{align*}
\abs{f(\wh{\Omega}_n, x) -f(\Bc,0)}
&= \limsup_{m \rightarrow \infty} \abs{f(A_m\Omega_n,A_my_m)  -f(\Bc, 0)} \\
& = \limsup_{m \rightarrow \infty} \abs{f(\Omega_n,y_m)  -f(\Bc, 0)} < 1/n.
\end{align*}

Now since $\Lb$ is compact in $\Xb_{d+1}$ we can pass to a subsequence so that $\wh{\Omega}_n$ converges to some $\wh{\Omega}$ in $\Lb$. Now for $x \in \wh{\Omega}$ we have 
\begin{align*}
f(\wh{\Omega}, x) = \lim_{n \rightarrow \infty} f(\wh{\Omega}_n, x) = f(\Bc,0).
\end{align*}
Thus by hypothesis $\wh{\Omega}$ is biholomorphic to $\Bc$. But $\wh{\Omega} \in \Lb$ and so we have a contradiction. 

\end{proof}

\begin{proof}[Proof of Theorem~\ref{thm:upper_cont}] This is essentially identical to the proof of Theorem~\ref{thm:cont}. \end{proof}

\section{The proof of Theorem~\ref{thm:squeezing}}\label{sec:squeezing}

To deduce Theorem~\ref{thm:squeezing} from our general results we only need to show:

\begin{proposition} Suppose $(\Omega_n, x_n)$ is a sequence converging to $(\Omega, x)$ in $\Xb_{d,0}$. Then 
\begin{align*}
\limsup_{n \rightarrow \infty} s_{\Omega_n}(x_n) \leq s_\Omega(x).
\end{align*}
\end{proposition}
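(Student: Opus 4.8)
The goal is to show upper semi-continuity of the squeezing function along a convergent sequence $(\Omega_n, x_n) \to (\Omega, x)$ in $\Xb_{d,0}$. The strategy is to extract a limiting extremal (or near-extremal) map from the domains $\Omega_n$ and show that it witnesses a squeezing radius for $\Omega$ of at least $\limsup_n s_{\Omega_n}(x_n)$.

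First I would set $r_n := s_{\Omega_n}(x_n)$, pass to a subsequence realizing $\limsup_n r_n =: r_\infty$, and for each $n$ pick a one-to-one holomorphic $f_n : \Omega_n \to \Bc$ with $f_n(x_n) = 0$ and $(r_n - 1/n)\Bc \subset f_n(\Omega_n)$. The key step is to produce a limit map $f : \Omega \to \Bc$. Since the $\Omega_n$ converge in the local Hausdorff topology, any compact $K \subset \Omega$ is contained in $\Omega_n$ for all large $n$; the $f_n$ are uniformly bounded (they map into $\Bc$), so by Montel's theorem and a diagonal argument over an exhaustion of $\Omega$ by compacta, a subsequence of $f_n$ converges locally uniformly on $\Omega$ to a holomorphic $f : \Omega \to \overline{\Bc}$, with $f(x) = 0$.

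The two things to check are that $f$ is actually one-to-one into $\Bc$ (not merely into the closed ball), and that $r_\infty \Bc \subset f(\Omega)$. For injectivity I would invoke the continuity of the Kobayashi metric (Theorem~\ref{thm:metric_conv}): each $f_n$ is distance-nonincreasing for the Kobayashi metrics, i.e.\ $k_{\Bc}(f_n(z); df_n(z)v) \le k_{\Omega_n}(z;v)$, and passing to the limit gives $k_{\Bc}(f(z); df(z)v) \le k_\Omega(z;v)$ for all $z \in \Omega$, $v \in \Cb^d$; in particular $df(z)$ is injective everywhere (if $df(z)v = 0$ then $k_\Omega(z;v) = 0$, forcing $v = 0$ since $\Omega \in \Xb_d$ contains no affine line, by Remark~\ref{rmk:barth}). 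Global injectivity of $f$ then follows from the standard Hurwitz-type argument for limits of injective holomorphic maps (a nonconstant locally uniform limit of injective holomorphic maps on a domain in $\Cb^d$ with everywhere-injective differential, together with the fact that the $f_n$ are injective, gives that $f$ is injective — one can argue via the degree/multiplicity of $f_n - f_n(w)$ converging, or cite the relevant result). This also shows $f(\Omega)$ is open, so $f(\Omega) \subset \Bc$ (open subset of $\overline{\Bc}$ contained in $\overline{\Bc}$ must lie in the interior). For the containment $r_\infty \Bc \subset f(\Omega)$: fix $w$ with $\|w\| < r_\infty$, so $\|w\| < r_n - 1/n$ for infinitely many $n$, giving $w \in f_n(\Omega_n)$, say $w = f_n(z_n)$; one shows $z_n$ stays in a compact subset of $\Omega$ (using that $f_n$ is $1/$Kobayashi-Lipschitz and $K_{\Bc}(0,w)$ is bounded, combined with properness of $K_\Omega$ and continuity from Theorem~\ref{thm:dist_conv}), extracts $z_n \to z \in \Omega$, and concludes $f(z) = w$. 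Hence $s_\Omega(x) \ge r_\infty$.

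The main obstacle I expect is the passage from injectivity of the $f_n$ to injectivity of the limit $f$ — in several complex variables this is more delicate than in one variable, and one must be careful to rule out $f$ being degenerate or collapsing two points; controlling the differential via the Kobayashi metric comparison, plus the no-affine-lines hypothesis on domains in $\Xb_d$, is what makes this work. A secondary technical point is ensuring the preimages $z_n$ of a fixed $w \in r_\infty\Bc$ do not escape to $\partial\Omega$, which is exactly where the completeness/properness of the Kobayashi distance on $\Omega$ (again Remark~\ref{rmk:barth}) together with the distance Lipschitz bound is used.
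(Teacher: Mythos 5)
Your overall strategy is the same as the paper's: take (near-)extremal maps $f_n:\Omega_n\to\Bc$, extract a locally uniform limit $f$ with $f(x)=0$, show $r_\infty\Bc\subset f(\Omega)$ via Kobayashi-distance bounds plus properness of $K_\Omega$, and conclude $s_\Omega(x)\ge r_\infty$. The genuine gap is in your injectivity step. The Kobayashi comparison you invoke points the wrong way: the distance-decreasing property of $f_n$ (and hence of the limit) gives $k_{\Bc}(f(z);df(z)v)\le k_\Omega(z;v)$, which is an \emph{upper} bound on $df$; if $df(z)v=0$ this only says $0\le k_\Omega(z;v)$, which is vacuous, so nothing forces $v=0$. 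A lower bound on $\norm{df_n(z)v}$ in terms of $k_{\Omega_n}(z;v)$ would require an upper bound on $k_{f_n(\Omega_n)}$ at $f_n(z)$, i.e.\ knowing $f_n(\Omega_n)$ contains a ball of definite size around $f_n(z)$, which you only have near $0$ (from $r_n\Bc\subset f_n(\Omega_n)$), not at arbitrary points. So your claim that $df$ is injective everywhere is unsupported, and the global injectivity is then deferred to an unspecified ``Hurwitz-type'' theorem --- which, as you yourself note, is exactly the delicate point in several variables.

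The efficient repair reorders your own argument. Prove the containment $r_\infty\Bc\subset f(\Omega)$ first: that step needs no injectivity of $f$, only the identity $K_{\Omega_n}(z_n,x_n)=K_{f_n(\Omega_n)}(w,0)\le K_{(r_n-1/n)\Bc}(w,0)$ coming from biholomorphic invariance and the inclusion $(r_n-1/n)\Bc\subset f_n(\Omega_n)$ (note the correct comparison domain is the small ball, not $\Bc$ as you wrote: $f_n(\Omega_n)\subset\Bc$ gives $K_{f_n(\Omega_n)}\ge K_{\Bc}$, the useless direction). Once $f(\Omega)$ contains an open set, $\det df\not\equiv 0$, and the precise several-variables Hurwitz statement --- a locally uniform limit of injective holomorphic maps is either injective or has identically vanishing Jacobian --- yields injectivity; that $f$ maps into the open ball follows from the maximum principle since $f(x)=0$. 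The paper avoids all of this by citing Deng--Guan--Zhang: Theorem 2.1 of \cite{DGZ2012} gives genuinely extremal maps with $r_n\Bc\subset f_n(\Omega_n)$, and Theorem 2.2 there gives injectivity of the limit directly. As written, your proposal has a real hole at the injectivity step; with the reordering above (or the citation) it becomes correct and essentially coincides with the paper's proof.
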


\begin{proof} By passing to a subsequence we may assume that 
\begin{align*}
\lim_{n \rightarrow \infty} s_{\Omega_n}(x_n) = \limsup_{n \rightarrow \infty} s_{\Omega_n}(x_n).
\end{align*}
Let $r_n = s_{\Omega_n}(x_n)$ and $r = \lim_{n \rightarrow \infty} s_{\Omega_n}(x_n)$. We may assume that $r > 0$ (otherwise there is nothing to prove).

By Theorem 2.1 in~\cite{DGZ2012}, there exists an injective holomorphic map $f_n: \Omega_n \rightarrow \Bc$  with $f(x_n)=0$ and
\begin{align*}
r_n\Bc \subset f_n(\Omega_n).
\end{align*}
Now by Theorem~\ref{thm:dist_conv}, $K_{\Omega_n}$ converges locally uniformly to $K_\Omega$ and so we can pass to a subsequence so that $f_n$ converges locally uniformly to a holomorphic function $f:\Omega \rightarrow \Bc$. 

Now fix some $w \in r \Bc$. Then for large $n$, we have $f_n^{-1}(w) = \{ z_n\}$ for some $z_n \in \Omega_n$. Then
\begin{align*}
K_{\Omega_n}(z_n, x_n)  = K_{f(\Omega_n)}( f(z_n), 0) \leq K_{ r_n\Bc }(w, 0)
\end{align*}
and so
\begin{align*}
\limsup_{n \rightarrow \infty} K_{\Omega_n}(z_n, x_n)  \leq K_{r\Bc}(w,0).
\end{align*}
So, using the fact that  $K_{\Omega_n}$ converges locally uniformly to $K_\Omega$, we can pass to a subsequence so that $z_n$ converges to some $z \in \Omega$. Then $f(z)=w$. Since $w \in r \Bc$ was arbitrary, we see that $r \Bc \subset f(\Omega)$. 

Finally, by Theorem 2.2 in~\cite{DGZ2012} the map $f:\Omega \rightarrow \Bc$ is injective. Thus 
\begin{align*}
\lim_{n \rightarrow \infty} s_{\Omega_n}(x_n) \leq s_\Omega(x)
\end{align*}
and the proof is complete.
\end{proof}

\section{The proof of Theorem~\ref{thm:gen_riem}}\label{sec:gen_riem}

We begin by showing the following normal family type result:

\begin{lemma}\label{lem:normal_family}
Suppose $\Omega_n$ converges to some $\Omega$ in $\Xb_{d}$ and $g_n \in \Gc_M(\Omega_n)$. Then there exists $n_k \rightarrow \infty$ and a metric $g \in \Gc_M(\Omega)$ so that $g_{n_k}$ converges to $g$ locally uniformly and for all vectors $X,Y,v,w\in \Cb^d$ 
\begin{align*}
X(Y(g_{n_k}(v,w)) \rightarrow X(Y(g(v,w))
\end{align*}
locally uniformly. 
\end{lemma}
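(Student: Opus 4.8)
The plan is to extract a convergent subsequence via a diagonal argument, using the uniform bounds built into the definition of $\Gc_M$ together with the continuity of the Kobayashi metric and distance from Theorems~\ref{thm:metric_conv} and~\ref{thm:dist_conv}. First I would fix an exhaustion $\Omega = \bigcup_j \Omega^{(j)}$ by relatively compact open subsets with $\overline{\Omega^{(j)}} \subset \Omega^{(j+1)}$; since $\Omega_n \to \Omega$ in the local Hausdorff topology, for each $j$ we have $\overline{\Omega^{(j)}} \subset \Omega_n$ for all large $n$. On $\overline{\Omega^{(j)}}$ the Kobayashi metric $k_\Omega$ is bounded above and below away from zero (it is continuous and non-degenerate on $\Xb_d$ by Barth's theorem, Remark~\ref{rmk:barth}), and by Theorem~\ref{thm:metric_conv} the same holds uniformly for $k_{\Omega_n}$ once $n$ is large. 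Hence property (2) in the definition of $\Gc_M$ gives a uniform two-sided bound on $g_n$ on $\overline{\Omega^{(j)}}$, property (3) gives a uniform Lipschitz bound on $z \mapsto g_{n,z}(v,w)$, property (4) gives a uniform $C^1$ bound on $z \mapsto X(g_{n,z}(v,w))$, and property (5) gives a uniform Lipschitz bound on the second derivatives $z \mapsto Y(X(g_{n,z}(v,w)))$ (here one uses that the Kobayashi distance $K_{\Omega_n}$ restricted to $\overline{\Omega^{(j)}} \times \overline{\Omega^{(j)}}$ is uniformly bounded, again by Theorem~\ref{thm:dist_conv}).

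With these bounds in hand, the functions $z \mapsto g_{n,z}(v,w)$ and $z \mapsto Y(X(g_{n,z}(v,w)))$, for $v,w,X,Y$ ranging over a countable dense set of $\Cb^d$, form equicontinuous, uniformly bounded families on each $\overline{\Omega^{(j)}}$, so by Arzel\`a--Ascoli and a diagonal extraction over $j$ and over the dense set of vectors, I can pass to a subsequence $n_k$ along which $g_{n_k}(v,w) \to g(v,w)$ locally uniformly and $Y(X(g_{n_k}(v,w))) \to G_{Y,X}(v,w)$ locally uniformly for all $v,w,X,Y$ in the dense set. A standard interpolation/uniform-bound argument promotes this to convergence for \emph{all} vectors (bilinearity in $v,w$ is automatic; the convergence of first derivatives $X(g_{n_k}(v,w))$ follows since these are $C^1$ with uniformly bounded derivative, and the limit function must then be $C^1$ with $X(g(v,w))$ the locally uniform limit, and similarly $Y(X(g(v,w))) = G_{Y,X}(v,w)$). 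The limit $g$ is a $C^2$ metric; it is K\"ahler because the K\"ahler condition is a closed condition under $C^2$-convergence (the relevant first derivatives of $g_{n_k}$ converge locally uniformly, so the K\"ahler form $\omega_{n_k}$ converges in $C^1$ to $\omega$ and $d\omega_{n_k} = 0$ passes to the limit).

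Finally I must check $g \in \Gc_M(\Omega)$, i.e. that all five defining inequalities survive the limit. Each inequality is of the form (something built from $g_{n_k}$ and its derivatives) $\leq M \cdot (\text{product of Kobayashi quantities})$; the left sides converge by the established locally uniform convergence of $g_{n_k}$ and its first and second derivatives, while the right sides converge by Theorems~\ref{thm:metric_conv} and~\ref{thm:dist_conv} (for inequalities (2)--(4) the Kobayashi metric terms converge; for (5) the extra factor $K_{\Omega_{n_k}}(z_1,z_2) \to K_\Omega(z_1,z_2)$ also converges, uniformly on compacta). Passing to the limit in each inequality gives the corresponding inequality for $g$, and for (2) one additionally uses that the limit of the middle term $k_{\Omega_{n_k}}(z;v)$ is exactly $k_\Omega(z;v)$. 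I expect the main technical obstacle to be the bookkeeping in step (5): one needs the uniform Lipschitz-in-$z$ bound on the second derivatives of $g_n$ to be genuinely uniform in $n$, which requires controlling $K_{\Omega_n}$ on the relevant compact sets uniformly in $n$ — this is exactly what Theorem~\ref{thm:dist_conv} provides, but it must be invoked carefully since the compact set must be chosen inside $\Omega$ and then shown to sit inside $\Omega_n$ for large $n$.
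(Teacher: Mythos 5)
Your proposal is correct and follows essentially the same route as the paper: uniform bounds and equicontinuity on compact sets extracted from conditions (2)--(5) of $\Gc_M$ together with Theorems~\ref{thm:metric_conv} and~\ref{thm:dist_conv}, then Arzel\`a--Ascoli with a diagonal extraction to get locally uniform convergence of $g_n$ and its first and second derivatives, and finally passing the defining inequalities to the limit. The only differences are cosmetic (the paper works with the finitely many basis directions $u_1,\dots,u_{2d}$ rather than a countable dense set, and is terser about verifying that the limit is K\"ahler and lies in $\Gc_M(\Omega)$, which you check explicitly).
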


\begin{proof} Let $e_1,\dots, e_d$ be the standard basis of $\Cb^d$ and let $u_1=e_1, \dots, u_d=e_d, u_{d+1}=ie_1, \dots, u_{2d} = ie_d$.  Then for $1 \leq i,j,k,\ell \leq 2d$ define the function $f_{n,i,j,k,\ell}:\Omega_n \rightarrow \Cb$ by
\begin{align*}
f_{n,i,j,k,\ell}= u_\ell(u_k(g_n(u_i,u_j)).
\end{align*}

Now if $K \subset \Omega$ is a compact set, there exists $N > 0$ so that $K \subset \Omega_n$ for $n > N$. Using the definition of $\Gc_M(\Omega_n)$, there exists some $C>0$ so that for all $n > N$ and all $1 \leq i,j,k,\ell \leq 2d$, the function $f_{n,i,j,k,\ell}$ is $C$-Lipschitz on $K$ and $\sup_{z \in K}\abs{ f_{n,i,j,k,\ell}(z)} < C$. 

Since $K \subset \Omega$ is an arbitrary compact set, we can pass to a subsequence of the $g_n$ and assume that 
\begin{align*}
\lim_{n \rightarrow \infty} f_{n,i,j,k,\ell}
\end{align*}
exists for all $i,j,k,\ell$ and the convergence is locally uniform. 

Now for $1\leq i,j,k \leq 2d$ define the function $h_{n,i,j,k} : \Omega_n \rightarrow \Cb$ by 
\begin{align*}
h_{n,i,j,k}= u_k(g(u_i, u_j)).
\end{align*}
Fix some $z_0 \in \Omega$. Then there exists some $N >0$ so that $z_0 \in \Omega_n$ for all $n > N$. Using part (3) of the definition of $\Gc_M$, we see that $\sup_{n > N} h_{n,i,j}(z_0)$ is finite. Then we can pass to a  subsequence and assume that 
\begin{align*}
\lim_{n \rightarrow \infty} h_{n,i,j}(z_0)
\end{align*}
exists for all $1 \leq i,j \leq d$. Then using the fact that $u_\ell(h_{n,i,j}) = f_{n,i,j,k,\ell}$, we can pass to a subsequence so that 
\begin{align*}
\lim_{n \rightarrow \infty} h_{n,i,j}
\end{align*}
exists and the convergence is locally uniformly in the $C^1$ topology. 

Repeating the argument above, we can then pass to a subsequence of the $g_n$ converges locally uniformly to some $C^2$ symmetric 2-form $g$ and for all vectors $X,Y,v,w \in \Cb^d$ 
\begin{align*}
X(Y(g_{n_k}(v,w)) \rightarrow X(Y(g(v,w))
\end{align*}
locally uniformly. Part (2) of the definition of $\Gc_M$ implies that $g$ is an actual metric and hence is in $\Gc_M(\Omega)$.
\end{proof}

Now for $M,d >0$, define a function $h_M:\Xb_{d,0} \rightarrow \Rb$ by letting $h_{M}(\Omega, x)$ be the infinium of all numbers $\epsilon > 0$ so that there exists a metric $g \in \Gc_M(\Omega)$ with
\begin{align*}
\max_{ v,w \in T_z \Omega \setminus \{0\} } \abs{ H_g(v)- H_g(w) } \leq \epsilon \text{ for all } z \in B_\Omega(x;1/\epsilon).
\end{align*}
Here $B_\Omega(x;r)$ is the closed ball of radius $r$ about the point $x \in \Omega$ with respect to the Kobayashi distance. 

\begin{proposition}\label{prop:normal}
The function $(-h_{M}) : \Xb_{d,0} \rightarrow \Rb_{\leq 0}$ is an upper semicontinuous intrinsic function. 
\end{proposition}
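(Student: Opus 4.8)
The plan is to verify the two assertions separately: that $h_M$ is intrinsic (so $-h_M$ is intrinsic) and that $-h_M$ is upper semicontinuous, i.e.\ that $h_M$ is lower semicontinuous on $\Xb_{d,0}$.

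For the intrinsic property, suppose $\varphi : \Omega_1 \to \Omega_2$ is a biholomorphism with $\varphi(p_1) = p_2$. The key observation is that every ingredient in the definition of $h_M$ transforms naturally under $\varphi$: the Kobayashi metric $k$ and distance $K$ are biholomorphic invariants, so $\varphi$ maps $B_{\Omega_1}(p_1;r)$ onto $B_{\Omega_2}(p_2;r)$; pulling back a K\"ahler metric $g \in \Gc_M(\Omega_2)$ by $\varphi$ gives a K\"ahler metric $\varphi^* g$ on $\Omega_1$, and since conditions (1)--(5) in the definition of $\Gc_M$ are phrased entirely in terms of $g$, its derivatives, and the Kobayashi metric/distance, the pullback $\varphi^* g$ lies in $\Gc_M(\Omega_1)$ (one should remark that conditions (3)--(5) are coordinate expressions for bounds that are really bounds on the covariant derivatives of $g$ measured in the Kobayashi metric, hence invariant; alternatively check directly using the chain rule that a bound of the required form is preserved). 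Finally, holomorphic sectional curvature is a pointwise Riemannian invariant: $H_{\varphi^* g}(v) = H_g(d\varphi(v))$, so $\max_{v,w} \abs{H_{\varphi^*g}(v) - H_{\varphi^*g}(w)}$ at a point $z$ equals the same quantity for $g$ at $\varphi(z)$. Therefore $g$ realizes the defining inequality for $(\Omega_2,p_2)$ at level $\epsilon$ if and only if $\varphi^* g$ does for $(\Omega_1,p_1)$, giving $h_M(\Omega_1,p_1) = h_M(\Omega_2,p_2)$.

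For lower semicontinuity of $h_M$, suppose $(\Omega_n, x_n) \to (\Omega, x)$ in $\Xb_{d,0}$; I must show $h_M(\Omega,x) \leq \liminf_n h_M(\Omega_n,x_n)$. Passing to a subsequence, assume $h_M(\Omega_n,x_n) \to L := \liminf$ and fix any $\delta > 0$; then for large $n$ there is $g_n \in \Gc_M(\Omega_n)$ with $\max_{v,w}\abs{H_{g_n}(v)-H_{g_n}(w)} \le L+\delta$ on $B_{\Omega_n}(x_n; 1/(L+\delta))$ (and also at level $L + 2\delta$, say, to have slack on the radius). Apply Lemma~\ref{lem:normal_family} to extract a further subsequence so that $g_{n_k} \to g \in \Gc_M(\Omega)$ locally uniformly together with first and second derivatives. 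The holomorphic sectional curvature $H_{g}(v)$ is an algebraic expression in $g$, $g^{-1}$, and the first two derivatives of $g$ (via the Christoffel symbols and the curvature tensor), so $H_{g_{n_k}} \to H_g$ locally uniformly on $\Omega$. Combined with the convergence $K_{\Omega_{n_k}} \to K_\Omega$ locally uniformly (Theorem~\ref{thm:dist_conv}), which forces $B_{\Omega}(x; r) \subset \liminf_k B_{\Omega_{n_k}}(x_{n_k}; r')$ for $r < r'$, one concludes that $\max_{v,w}\abs{H_g(v)-H_g(w)} \le L+\delta$ on $B_\Omega(x;1/(L+\delta))$ — a slightly careful $\epsilon/3$ argument handling the interplay of the two radii. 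Hence $h_M(\Omega,x) \le L+\delta$, and letting $\delta \to 0$ finishes the proof.

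The main obstacle I anticipate is the last step: matching the Kobayashi balls $B_{\Omega_n}(x_n;1/\epsilon_n)$ with the limiting ball $B_\Omega(x;1/\epsilon)$ when the radii $1/\epsilon_n$ themselves vary and the ambient domains move. The clean way around this is the standard trick of working with a fixed level $L+\delta$ that is bounded away from the true infimum on each $\Omega_n$, using the resulting slack both to absorb the variation of the radius and to pass the (closed, hence compact inside $\Omega$) ball $B_\Omega(x; 1/(L+\delta))$ into the moving domains via the local Hausdorff convergence. The curvature convergence itself, once Lemma~\ref{lem:normal_family} is invoked, is routine since it is a continuous function of a finite $2$-jet of the metric.
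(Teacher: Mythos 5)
Your proposal is correct and follows the same route as the paper: the intrinsic part is verbatim the paper's pullback argument (invariance of the Kobayashi metric/distance, of $\Gc_M$, of holomorphic sectional curvature, and of Kobayashi balls), and the lower semicontinuity of $h_M$ is exactly what the paper asserts to ``follow immediately'' from Lemma~\ref{lem:normal_family} together with Theorems~\ref{thm:dist_conv} and~\ref{thm:metric_conv}. Your extra care with the slack $\delta$ in the level and the radius (and the convergence of $H_{g_{n_k}}$ from the $C^2$-convergence of the metrics) simply fills in the details the paper leaves implicit, and it is handled correctly.
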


\begin{proof} We first argue that $h_M$ is intrinsic. Let $\varphi:\Omega_1 \rightarrow \Omega_2$ be a biholomorphism with $\varphi(x_1)=x_2$. Suppose that $g \in \Gc_M(\Omega_1)$. Then consider the metric $\varphi^*(g)$ on $\Omega_2$ given by 
\begin{align*}
\varphi^*(g)_z(v,w) = g_{\varphi^{-1}}\left( d(\varphi^{-1})_z(v), d(\varphi^{-1})_z(w)\right).
\end{align*}
Since $\varphi$ is a biholomorphism the metric $\varphi^*(g)$ is K{\"a}hler and since $\varphi$ is an isometry with respect to the Kobayashi metric and distance, it is easy to show that $\varphi^*(g) \in \Gc_M(\Omega_2)$. Finally, we observe that $H_g(v) = H_{\varphi^*(g)}(d(\varphi)(v))$ and $\varphi(B_{\Omega_1}(x;r)) = B_{\Omega_2}(\varphi(x);r)$. So we see that $h_M(\Omega_1, x_1) \geq h_M(\Omega_2,x_2)$. Repeating the above argument with $\varphi^{-1}:\Omega_2 \rightarrow \Omega_2$ shows that $h_M(\Omega_1, x_1) \leq h_M(\Omega_2,x_2)$. Hence $h_M$ is intrinsic. 

The fact that $(-h_M)$ is upper semicontinuous follows immediately from Lemma~\ref{lem:normal_family} and Theorems~\ref{thm:dist_conv} and~\ref{thm:metric_conv}. \end{proof}

\begin{proposition}\label{prop:rigidity}
Suppose $\Omega \in \Xb_d$, $M > 1$, and $h_M(\Omega,x) = 0$ for some $x \in \Omega$. Then $\Omega$ is biholomorphic to $\Bc$. 
\end{proposition}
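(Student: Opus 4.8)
The plan is to extract, from the nearly curvature‑pinched metrics supplied by the hypothesis $h_M(\Omega,x)=0$, a single metric on all of $\Omega$ with pointwise constant holomorphic sectional curvature, then upgrade this to globally constant curvature via Schur's lemma, and finally apply the Hawley--Igusa rigidity theorem for complete simply connected K\"ahler space forms. Before starting, observe that the case $d=1$ is immediate: every $\Omega\in\Xb_1$ is a proper convex, hence simply connected, subdomain of $\Cb$, so the Riemann mapping theorem already identifies it with $\Db=\Bc$. I therefore assume $d\geq 2$ in what follows.

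Since $h_M(\Omega,x)=0$, I would fix $\epsilon_n\searrow 0$ and metrics $g_n\in\Gc_M(\Omega)$ with $\max_{v,w\in T_z\Omega\setminus\{0\}}\abs{H_{g_n}(v)-H_{g_n}(w)}\leq\epsilon_n$ for every $z$ in the Kobayashi ball $B_\Omega(x;1/\epsilon_n)$. Applying Lemma~\ref{lem:normal_family} to the constant sequence $\Omega_n=\Omega$ and passing to a subsequence, I obtain $g\in\Gc_M(\Omega)$ with $g_n\to g$ locally uniformly, together with first and second derivatives. Because $g$ is a genuine metric, hence bounded below on compact sets, the Christoffel symbols and the curvature tensor are rational expressions in $g$ and its first two derivatives whose denominators stay away from $0$ on compacta, so $H_{g_n}\to H_g$ uniformly on compact subsets of $\Omega\times(\Cb^d\setminus\{0\})$. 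Since $\Omega$ contains no complex affine line, Barth's theorem (Remark~\ref{rmk:barth}) makes $K_\Omega$ Cauchy complete, so the balls $B_\Omega(x;1/\epsilon_n)$ exhaust $\Omega$; hence for any compact $K\subset\Omega$ and all large $n$ one has $\sup_{z\in K}\max_{v,w}\abs{H_{g_n}(v)-H_{g_n}(w)}\leq\epsilon_n$, and letting $n\to\infty$ forces $H_g(v)=H_g(w)$ for every $z\in\Omega$ and all nonzero $v,w\in T_z\Omega$. Thus $g$ has \emph{pointwise constant holomorphic sectional curvature}, say equal to $\lambda(z)$ at the point $z$, where $\lambda$ is (by parts (3)--(5) of the definition of $\Gc_M$) locally Lipschitz.

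Next I would invoke Schur's lemma: on a K\"ahler manifold of complex dimension at least $2$, pointwise constant holomorphic sectional curvature is automatically globally constant (Chapter IX of~\cite{KN1996}). The step I expect to be the main obstacle is precisely this one, because $g$ is only $C^2$ with Lipschitz second derivatives, so its curvature tensor is merely locally Lipschitz and third derivatives of $g$ exist only almost everywhere; one must check that the second Bianchi identity and Schur's differentiation argument survive in this reduced regularity, forcing $\lambda\equiv c$ to be constant. Granting this, $(\Omega,g)$ is a K\"ahler manifold of constant holomorphic sectional curvature $c$ which is simply connected (as $\Omega$ is convex) and complete (as $g$ is bi-Lipschitz to the complete Kobayashi metric, by part (2) of the definition of $\Gc_M$ together with Barth's theorem). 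By the classification of complete simply connected K\"ahler manifolds of constant holomorphic sectional curvature (Hawley~\cite{H1953}, Igusa~\cite{I1954}; see also~\cite{KN1996}), $(\Omega,g)$ is holomorphically isometric to $\mathbb{CP}^d$, to $\Cb^d$, or to $\Bc$ according as $c>0$, $c=0$, or $c<0$. The first is impossible because $\Omega$ is a bounded, hence noncompact, domain, and the second is impossible because a bounded domain is not biholomorphic to $\Cb^d$. Therefore $c<0$ and $\Omega$ is biholomorphic to $\Bc$, which is what we wanted.
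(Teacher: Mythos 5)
Your argument follows essentially the same route as the paper: use Lemma~\ref{lem:normal_family} on a sequence $g_n\in\Gc_M(\Omega)$ witnessing $h_M(\Omega,x)=0$ to extract a limit metric $g\in\Gc_M(\Omega)$ whose holomorphic sectional curvature depends only on the base point, then apply the K\"ahler Schur lemma (Theorem 7.5 in \cite[Chapter IX, Section 7]{KN1996}) and the classification of complete simply connected K\"ahler manifolds of constant holomorphic sectional curvature (Theorems 7.8 and 7.9 there), and finally exclude the projective space and $\Cb^d$. Two comments. The one genuine misstep is your exclusion of $\Cb^d$: you assert that $\Omega$ is bounded, but the hypothesis is only $\Omega\in\Xb_d$, and the proposition is applied precisely to the unbounded rescaled limit domains in $\Lb$, so ``a bounded domain is not biholomorphic to $\Cb^d$'' is not available. (The exclusion of projective space is unaffected, since any nonempty domain in $\Cb^d$ is noncompact, boundedness or not.) The fix is the paper's one-line argument, which is already implicit in your completeness step: by Remark~\ref{rmk:barth} the Kobayashi metric of $\Omega$ is nondegenerate --- equivalently, by part (2) of the definition of $\Gc_M$, $g$ is comparable to $k_\Omega$ --- whereas the Kobayashi pseudometric of $\Cb^d$ vanishes identically, so $\Omega$ cannot be biholomorphic to $\Cb^d$. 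Second, the Schur step you leave conditional (``granting this'') is exactly the point the paper disposes of by citing Theorem 7.5 of \cite{KN1996}; your concern about the $C^2$-with-Lipschitz-second-derivatives regularity is reasonable to raise, but as written your proof is incomplete at the same place where the paper relies on the reference, so it does not constitute an improvement. On the positive side, your separate treatment of $d=1$ via the Riemann mapping theorem is a worthwhile observation, since the Schur argument requires complex dimension at least two and the paper does not comment on that case.
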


\begin{proof}
Using Lemma~\ref{lem:normal_family}, there exists a metric $g \in \Gc_M(\Omega)$ with holomorphic sectional curvature which only depends on the base point. Then Theorem 7.5 in~\cite[Chapter IX, Section 7]{KN1996} implies that $g$ has constant holomorphic sectional curvature. Thus by Theorems 7.8 and 7.9 in~\cite[Chapter IX, Section 7]{KN1996}, $\Omega$ is biholomorphic to the complex projective space, $\Bc$, or $\Cb^d$. Since $\Omega$ is non-compact, clearly $\Omega$ is not biholomorphic to the complex projective space. Since the Kobayashi metric is nondegenerate on $\Omega$ (see Remark~\ref{rmk:barth}), $\Omega$ cannot be biholomorphic to $\Cb^d$. Thus $\Omega$ is biholomorphic to $\Bc$.    
\end{proof}

Combining Proposition~\ref{prop:normal}, Proposition~\ref{prop:rigidity}, and Theorem~\ref{thm:upper_cont} we obtain:

\begin{corollary}
For any $d > 0$ and $M > 1$, there exists some $\epsilon = \epsilon(M,d) > 0$ so that: if $\Omega \subset \Cb^d$ is a bounded convex domain with $C^\infty$ boundary and  
\begin{align*}
h(\Omega,z) \leq \epsilon
\end{align*}
outside a compact subset of $\Omega$, then $\Omega$ is strongly pseudoconvex.
\end{corollary}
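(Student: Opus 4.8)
The plan is to apply Theorem~\ref{thm:upper_cont} to the intrinsic function $f := -h_M : \Xb_{d,0} \rightarrow \Rb_{\leq 0}$. By Proposition~\ref{prop:normal}, $f$ is upper semicontinuous and intrinsic, so the only things left to check are the value of $f$ at $(\Bc,0)$ and the rigidity hypothesis of Theorem~\ref{thm:upper_cont}; once these are in hand the corollary follows by unwinding the definition of $h_M$.

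First I would record that $h_M(\Bc, 0) = 0$, hence $f(\Bc, 0) = 0$. On the one hand $h_M \geq 0$ by definition, since the infimum defining it runs over positive $\epsilon$. On the other hand, the complex hyperbolic metric on $\Bc$ --- for instance the Bergman metric $b_\Bc$, which lies in $\Gc_M(\Bc)$ once $M \geq M(d)$ --- has constant holomorphic sectional curvature, so $\max_{v,w \in T_z\Bc \setminus \{0\}} \abs{H_g(v) - H_g(w)} = 0 \leq \epsilon$ for every $\epsilon > 0$ and every $z \in \Bc$, in particular on every Kobayashi ball $B_\Bc(0; 1/\epsilon)$. Thus $h_M(\Bc, 0) = 0$. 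Next I would verify the rigidity hypothesis: if $\Omega \in \Xb_d$ and $f(\Omega, x) \geq f(\Bc, 0) = 0$ for all $x \in \Omega$, then since $f = -h_M \leq 0$ everywhere this forces $h_M(\Omega, x) = 0$ for all $x \in \Omega$, in particular for some $x$, and Proposition~\ref{prop:rigidity} then gives that $\Omega$ is biholomorphic to $\Bc$. So $f$ meets both hypotheses of Theorem~\ref{thm:upper_cont}, which produces $\epsilon = \epsilon(d,f) = \epsilon(M,d) > 0$ such that any bounded convex domain $\Omega \subset \Cb^d$ with $C^\infty$ boundary satisfying $f(\Omega, z) \geq f(\Bc, 0) - \epsilon = -\epsilon$ --- equivalently $h_M(\Omega, z) \leq \epsilon$ --- outside a compact subset of $\Omega$ is strongly pseudoconvex. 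This is exactly the assertion of the corollary.

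I do not expect any real obstacle here: the genuinely substantive inputs --- upper semicontinuity and intrinsicality of $-h_M$, and the rigidity statement --- are Propositions~\ref{prop:normal} and~\ref{prop:rigidity}, and the identification $h_M(\Bc,0)=0$ is an immediate consequence of the homogeneity of $\Bc$ together with the existence of a constant-holomorphic-sectional-curvature metric in $\Gc_M(\Bc)$. The one point that warrants care --- relevant when one deduces Theorem~\ref{thm:gen_riem} and Theorem~\ref{thm:bergman} from this corollary --- is the double role played by $\epsilon$ in the definition of $h_M$: it bounds both the curvature variation and the radius $1/\epsilon$ of the Kobayashi ball on which that bound is required. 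If a single $g \in \Gc_M(\Omega)$ has $\delta$-pinched holomorphic sectional curvature on the complement of a compact set $K$, then for $z$ outside a suitably enlarged compact set the ball $B_\Omega(z; 1/\delta)$ is disjoint from $K$ (because $K_\Omega$ is a proper metric), so $h_M(\Omega, z) \leq \delta$ there; this is what lets the hypothesis of the general theorem be fed into the corollary.
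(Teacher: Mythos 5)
Your proposal is correct and follows exactly the paper's route: the paper's proof is literally the one-line combination of Proposition~\ref{prop:normal}, Proposition~\ref{prop:rigidity}, and Theorem~\ref{thm:upper_cont} applied to $f=-h_M$, and your verifications that $h_M(\Bc,0)=0$ and that the rigidity hypothesis reduces to Proposition~\ref{prop:rigidity} are precisely the implicit steps. The only point you leave half-stated is the case $1<M<M(d)$, where your argument for $h_M(\Bc,0)=0$ via $b_\Bc\in\Gc_M(\Bc)$ needs $M$ large; this is repaired in one line by the monotonicity $h_M\geq h_{M'}$ for $M\leq M'$ (so the large-$M$ case of the corollary implies the general one), a detail the paper also passes over.
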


Finally we are ready to prove Theorem~\ref{thm:gen_riem}.

\begin{proof}[Proof of Theorem~\ref{thm:gen_riem}]
Fix $\epsilon > 0$ with the the following property: if $\Omega \subset \Cb^d$ is a bounded convex domain with $C^\infty$ boundary and  
\begin{align*}
h(\Omega,z) \leq 2\epsilon
\end{align*}
outside a compact set of $\Omega$, then $\Omega$ is strongly pseudoconvex.

Now suppose that $\Omega \in \Xb_d$, $K \subset \Omega$ is compact, and there exists a metric $g \in \Gc_M(\Omega)$ so that 
\begin{align*}
\max_{ v \in T_z \Omega \setminus \{0\} } \abs{ H_{g}(v)- \frac{-4}{d+1} } \leq \epsilon \text{ for all } z \in \Omega \setminus K.
\end{align*}
We claim that $\Omega$ is strongly pseudoconvex. 

Since $K_\Omega$ is a proper distance on $\Omega$ (see Remark~\ref{rmk:barth}), there exists some compact subset $K^\prime \subset \Omega$ so that $B_\Omega(x;1/\epsilon) \subset \Omega \setminus K$ for all $x \in \Omega \setminus K^\prime$. Then, with this choice of $K^\prime$, 
\begin{align*}
h_M(\Omega,x) \leq 2\epsilon
\end{align*}
for all $x \in \Omega \setminus K^\prime$. So by our choice of $\epsilon > 0$, $\Omega$ is strongly pseudoconvex. 
\end{proof}

\section{Proof of Theorem~\ref{thm:bergman}}\label{sec:bergman}

To deduce Theorem~\ref{thm:bergman} from Theorem~\ref{thm:gen_riem} it is enough to prove the following: 

\begin{proposition}\label{prop:bergman}
For any $d > 0$, there exists $M=M(d)>1$ so that:  $b_\Omega \in \Gc_M(\Omega)$ for every $\Omega \in \Xb_d$.
\end{proposition}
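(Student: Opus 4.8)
The plan is to establish the five defining properties of $\Gc_M(\Omega)$ for $g = b_\Omega$ with a constant $M$ depending only on $d$, by exploiting the affine-invariance of the setup together with the compactness result of Theorem~\ref{thm:weak_fund_domain}. The key observation is that all five conditions are stated entirely in terms of the Kobayashi metric, the Kobayashi distance, and derivatives of the Bergman metric coefficients, and every one of these quantities transforms in a controlled way under the affine group $\Aff(\Cb^d)$. Specifically, if $A \in \Aff(\Cb^d)$ and $\Omega' = A\Omega$, then $k_{\Omega'}(Az; dA(v)) = k_\Omega(z;v)$, $K_{\Omega'}(Az_1,Az_2) = K_\Omega(z_1,z_2)$, and the Bergman kernel (hence the Bergman metric) transforms by the square modulus of the Jacobian of $A$, which is a \emph{constant} since $A$ is affine; consequently the Bergman metric coefficients satisfy $b_{\Omega'}(Az)(dA(v),dA(w)) = b_\Omega(z)(v,w)$ after absorbing the constant Jacobian factor, and the same holds for all their Euclidean derivatives once one also tracks how the vectors $X,Y,v,w$ are pushed forward by the linear part of $A$. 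Thus the five inequalities are invariant (with the same $M$) under the $\Aff(\Cb^d)$-action on $\Xb_{d,0}$.

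Given this invariance, it suffices to verify the five conditions for $(\Omega,0)$ ranging over the compact set $\Kb \subset \Xb_{d,0}$ from Subsection~\ref{subsec:acts_cocompact}, since Theorem~\ref{thm:weak_fund_domain} says every $(\Omega,x) \in \Xb_{d,0}$ is an affine image of a point of $\Kb$. So first I would fix attention on a single point $(\Omega,0) \in \Kb$ and observe that near $0$ everything is smooth: $b_\Omega$ is a real-analytic Kähler metric on $\Omega$ (the Bergman kernel of a bounded domain is real-analytic and non-vanishing on the diagonal), so all the quantities $X(b_\Omega(v,w))$, $Y(X(b_\Omega(v,w)))$, and the relevant difference quotients are continuous functions of $(\Omega,0,X,Y,v,w)$. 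Second, I would show these functions are \emph{jointly continuous} in $(\Omega,0) \in \Kb$: this uses the stability of the Bergman kernel under local Hausdorff convergence of convex domains — one can appeal to $L^2$-normal-family arguments for the Bergman kernel (Ramadanov-type stability), giving locally uniform convergence of the kernel and all its derivatives on interior compacta, which transfers to locally uniform convergence of $b_{\Omega_n}$ in the $C^\infty$ topology on interior compacta. Third, combine this with Theorems~\ref{thm:dist_conv} and~\ref{thm:metric_conv}, which give joint continuity of $k_\Omega$ and $K_\Omega$ under the same convergence. Then each of the five ``bounds'' defining $\Gc_M$ can be rephrased as: a certain continuous, strictly positive function on a certain compact set (a compact piece of $\Kb$ times spheres of vectors) is bounded; compactness yields a uniform bound, and taking $M$ to be the maximum of the five resulting constants finishes the argument.

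There is one point requiring care: conditions (3)–(5) involve \emph{global} statements — the vectors $X,Y,v,w$ and the points $z,z_1,z_2$ range over all of $\Cb^d$ and $\Omega$, not just a compact neighborhood of the basepoint. Near the boundary of $\Omega$ the Bergman metric blows up, so one cannot naively take a sup over all of $\Omega$. The resolution is again affine-invariance combined with the comparison with the Kobayashi metric: since the inequalities are homogeneous of the correct degree in the Kobayashi quantities (by design of the definition of $\Gc_M$), it suffices to check them after renormalizing so that the point in question is the basepoint of a member of $\Kb$ — i.e.\ one only ever needs the estimate at a \emph{single} point (the origin) of a domain in $\Kb$, because for any $z \in \Omega$ one applies $A \in \Aff(\Cb^d)$ sending $(\Omega,z)$ into $\Kb$ and uses invariance. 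For condition (5) the Lipschitz-in-$K_\Omega$ bound, I would similarly reduce to comparing two nearby basepoints: write both as affine images of points of $\Kb$, and use the fact that a bounded Kobayashi distance between $z_1,z_2$ translates, after normalization, into the two renormalized domains being close in the local Hausdorff topology — whence the $C^2$-closeness of their Bergman metrics from the stability argument, with a Lipschitz constant that is uniform by compactness of $\Kb$. I expect the \textbf{main obstacle} to be making this last reduction fully rigorous: namely, quantifying ``bounded Kobayashi distance implies the renormalized domains are uniformly close in local Hausdorff topology,'' which requires a uniform (over $\Kb$) two-sided comparison between the Kobayashi distance and the normalization procedure of Theorem~\ref{thm:weak_fund_domain}. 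Everything else is a compactness-plus-continuity packaging of standard facts about the Bergman kernel of bounded (convex) domains.
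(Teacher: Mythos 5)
Your reduction via affine invariance of $b_\Omega$, $k_\Omega$, $K_\Omega$ together with the compact set $\Kb$ is exactly the mechanism the paper uses (there packaged as a contradiction argument: if a uniform bound failed along a sequence, normalize by Theorem~\ref{thm:frankel_compactness}, extract a subsequential limit of the Bergman kernels by Montel's theorem using monotonicity and standard pointwise kernel inequalities, and use Theorem~\ref{thm:metric_conv} to see that the Kobayashi factors in the denominator stay bounded away from zero), and for conditions (2)--(4) your plan is sound in spirit. But there is a genuine gap exactly where you flag it: condition (5). Checking the conditions ``only at the basepoint of a domain in $\Kb$'' is legitimate for the one-point conditions, while (5) involves two points, and your proposed fix --- that bounded Kobayashi distance between $z_1$ and $z_2$ forces the two renormalized pairs $A_1(\Omega,z_1),A_2(\Omega,z_2)\in\Kb$ to be uniformly close in the local Hausdorff topology --- is not available: the normalization in Theorem~\ref{thm:weak_fund_domain} is non-canonical and comes with no quantitative comparison to the Kobayashi distance, so this route stalls precisely at the step you yourself identify as the main obstacle.

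The missing idea, and what the paper actually does, is to avoid any two-point comparison by upgrading the pointwise estimate one order: it proves, by the same renormalization-plus-normal-families compactness argument you propose, that for every $n$ there is $C_n(d)$ with $\abs{X_1\cdots X_n(b_{\Omega,z}(v,w))}\le C_n\, k_\Omega(z;v)k_\Omega(z;w)\prod_{i=1}^n k_\Omega(z;X_i)$ for all $\Omega\in\Xb_d$, all $z$, and all vectors. The cases $n=1,2$ give (3) and (4), and the case $n=3$ is the ``stronger assertion'' from which the Lipschitz condition (5) is meant to follow (by integrating the third-derivative bound along a path from $z_1$ to $z_2$ of Kobayashi length close to $K_\Omega(z_1,z_2)$, using that Kobayashi data at points of bounded Kobayashi distance are comparable); condition (2) is disposed of at the outset by Frankel's two-sided comparison $b_\Omega\asymp k_\Omega$ with a dimensional constant. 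Two further points where your direct ``continuity on $\Kb$'' packaging asks for more than you supply: you would need genuine stability (not merely subsequential convergence) of the Bergman kernel and its derivatives under local Hausdorff convergence, including for the unbounded limit domains occurring in $\Kb$, and a uniform positive lower bound for $\kappa_\Omega(z,z)$ on interior compacta there (needed since $b_\Omega$ involves $\log\kappa_\Omega$); the paper's contradiction formulation only requires Montel plus monotonicity for the upper bounds and extracts the lower bound from the comparison $b_\Omega\asymp k_\Omega$, which is one reason it is phrased that way.
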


\begin{proof} By a result of Frankel~\cite{F1989b}, there exists some $C_0=C_0(d) > 1$ so that 
\begin{align}
\label{est:one}
\frac{1}{C_0} b_\Omega \leq k_\Omega \leq C_0 b_\Omega
\end{align}
for all $\Omega \in \Xb_d$. So we only need to prove conditions (3), (4), and (5) in the definition $\Gc_M(\Omega)$. We will actually prove a stronger assertion: for any $n > 0$, there exists a $C_n =C_n(d) > 0$ so that: for all vectors $X_1, \dots, X_n, v,w \in \Cb^d$ we have
\begin{align*}
X_1 \cdots X_n (b_{\Omega,z}(v,w)) \leq C_n k_\Omega(z;v) k_\Omega(z;w)\prod_{i=1}^n k_\Omega(z;X_i) .
\end{align*}

Suppose not, then there exists some $n >0$ and sequences $(\Omega_m,x_m) \in \Xb_{d,0}$, $X_{m,1}, \dots, X_{m,n}, v_m, w_m \in \Cb^d$
so that 
\begin{align*}
\lim_{m \rightarrow \infty} \frac{ X_{m,1} \cdots X_{m,n} (b_{\Omega_m,x_m}(v_m,w_m))}{k_{\Omega_m}(x_m;v_m) k_{\Omega_m}(x_m;w_m)\prod_{i=1}^n k_{\Omega_m}(x_m;X_{m,i}) } = \infty.
\end{align*}

Now, for any affine isomorphism $A \in \Aff(\Cb^d)$ and vectors $X_1, \dots, X_n, v,w \in \Cb^d$ we have 
\begin{align*}
X_1 \cdots X_n (b_{\Omega}(v,w)) = (AX_1) \cdots (AX_n) (b_{A\Omega}(Av,Aw))
\end{align*}
So using the invariance of $k_{\Omega_n}$ we can replace each tuple 
\begin{align*}
(\Omega_m, x_m), X_{m,1}, \dots, X_{m,n}, v_m, w_m
\end{align*}
 by 
 \begin{align*}
(A_m\Omega_m, A_mx_m), A_mX_{m,1}, \dots, A_mX_{m,n}, A_mv_m, A_mw_m
 \end{align*}
 where $A_m$ is some affine isomorphism. So using Theorem~\ref{thm:frankel_compactness}, we can suppose that $(\Omega_m, x_m)$ converges to some $(\Omega, x)$ in $\Xb_{d,0}$.  
We also note that the ratio
\begin{align*}
\frac{ X_{m,1} \cdots X_{m,n} (b_{\Omega_m,x_m}(v_m,w_m))}{k_{\Omega_m}(x_m;v_m) k_{\Omega_m}(x_m;w_m)\prod_{i=1}^n k_{\Omega_m}(x_m;X_{m,i}) }
\end{align*}
is invariant under scaling any of the $X_{m,1}, \dots, X_{m,n}, v_m,w_m$ so we may assume that these are all unit vectors with respect to the Euclidean metric. Then, after passing to a  subsequence, we can suppose that 
\begin{align*}
X_{m,1}, \dots, X_{m,n}, v_m, w_m \rightarrow X_1, \dots, X_n, v,w.
\end{align*}

Now by Theorem~\ref{thm:metric_conv}
\begin{align*}
\lim_{m \rightarrow \infty} & \ k_{\Omega_m}(x_m;v_m) k_{\Omega_m}(x_m;w_m)\prod_{i=1}^n k_{\Omega_m}(x_m;X_{m,i})  \\
& = k_{\Omega}(x;v) k_{\Omega}(x;w)\prod_{i=1}^n k_{\Omega}(x;X_{i}) > 0.
\end{align*} 
So to obtain a contradiction we must show that 
\begin{align*}
\limsup_{m \rightarrow \infty} \abs{X_{m,1} \cdots X_{m,n} (b_{\Omega_m,x_m}(v_m,w_m))}< +\infty.
\end{align*}

Let $\kappa_{\Oc} : \Oc \times \Oc \rightarrow \Cb$ be the Bergman kernel on a domain $\Oc$. Then (by definition)
\begin{align*}
b_{\Oc,z}(v,w) = \sum_{i=1}^d v_i \left(\frac{\partial^2}{\partial z_i \partial \overline{z}_i} \log \kappa_{\Oc}(z,z) \right)\overline{w}_i.
\end{align*}
We also have the following basic properties:
\begin{enumerate}
\item If $\Oc^\prime \subset \Oc$, then $\kappa_{\Oc}(z,z) \leq \kappa_{\Oc^\prime}(z,z)$.
\item If $z,w \in \Oc$, then $\kappa_{\Oc}(z,w) \leq \kappa_{\Oc}(z,z)\kappa_{\Oc}(w,w)$.
\end{enumerate}

We claim that after passing to a subsequence $\kappa_{\Omega_m}$ converges to some $C^\infty$ function $\kappa : \Omega \times \Omega \rightarrow \Cb$ and the convergence is locally uniform for each partial derivative. Using basic properties of harmonic functions, it is enough to show that $\kappa_{\Omega_m}$ converges locally uniformly to some function $\kappa : \Omega \times \Omega \rightarrow \Cb$. Using Montel's theorem, it is enough to show that $\kappa_{\Omega_m}$ is locally bounded. 

So fix a compact set $K \subset \Omega$ and open neighborhood $\Oc$ of $K$ which is relatively compact in $\Omega$. Then for $m$ large, $\Oc \subset \Omega_m$ and so 
\begin{align*}
\kappa_{\Omega_m}(z,w) \leq \kappa_{\Omega_m}(z,z)\kappa_{\Omega_m}(w,w) \leq \kappa_{\Oc}(z,z)\kappa_{\Oc}(w,w)
\end{align*}
for $z,w \in K$. So $\kappa_{\Omega_m}$ is locally bounded. 

Then by Montel's theorem we can pass to a subsequence and assume that $\kappa_{\Omega_m}$ converges to some $C^\infty$ function $\kappa : \Omega \times \Omega \rightarrow \Cb$ and the convergence is locally uniform for each partial derivative. Now using Estimate~\ref{est:one}, we see that $1/\kappa_{\Omega_m}(z,z)$ is locally bounded and thus 
\begin{align*}
\limsup_{m \rightarrow \infty} \abs{X_{m,1} \cdots X_{m,n} (b_{\Omega_m,x_m}(v_m,w_m))}< +\infty.
\end{align*}

So we have a contradiction. 
\end{proof}

\appendix

\bibliographystyle{alpha}
\bibliography{complex_kob}

\end{document}